\newcommand{\mytitle}{
  Rate Constant Matrix Contraction Method for\\
  Stiff Master Equations with Detailed Balance%
}
\title{\mytitle}
\newtheorem{theorem}{Theorem}[section]
\newtheorem{lemma}[theorem]{Lemma}
\newtheorem{proposition}[theorem]{Proposition}
\theoremstyle{definition}
\algnewcommand{\algorithmicinput}{\textbf{Input:}}
\algnewcommand{\Input}{\item[\algorithmicinput]}
\algnewcommand{\algorithmicoutput}{\textbf{Output:}}
\algnewcommand{\Output}{\item[\algorithmicoutput]}
\Crefname{step}{Step}{Steps}
\Crefname{step}{Step}{Steps}
\newcounter{type}
\renewcommand\thetype{\Alph{type}}
\crefname{type}{Type}{Types}
\def\csname ver@etex.sty\endcsname{3000/12/31}
\newcommand{\retainlabel}[1]{\label{#1}\sbox0{\ref{#1}}}
\newcommand{\zeros}{\mathbf{0}}
\newcommand{\zerostr}{\trsp{\zeros}}
\renewcommand{\ones}{\mathbf{1}}
\newcommand{\onestr}{\trsp{\ones}}
\newcommand{\pipr}[1]{{\agbr{#1}}_\pi}
\newcommand{\piprbig}[1]{{\agbr[\big]{#1}}_\pi}
\newcommand{\pinorm}[1]{{\norm{#1}}_\pi}
\newcommand{\pinormbig}[1]{{\norm[\big]{#1}}_\pi}
\newcommand{\given}{\mathbin{|}}
\newcommand{\E}{\mathbb{E}}
\newcommand{\Err}{\mathrm{Err}}
\def\CC{{C\nolinebreak[4]\hspace{-.05em}\raisebox{.4ex}{\tiny\textbf{++}}}}
\DeclareMathOperator{\proj}{proj}
\DeclareMathOperator{\nnz}{nnz}
\author{
  Satoru Iwata%
  \thanks{
    Department of Mathematical Informatics, Graduate School of Information Science and Technology, University of Tokyo, Tokyo 113-8656, Japan.
    E-mail: \{%
      \href{mailto:iwata@mist.i.u-tokyo.ac.jp}{\texttt{\nolinkurl{iwata}}},
      \href{mailto:oki@mist.i.u-tokyo.ac.jp}{\texttt{\nolinkurl{oki}}},
      \href{mailto:sakaue@mist.i.u-tokyo.ac.jp}{\texttt{\nolinkurl{sakaue}}}%
    \}\texttt{\nolinkurl{@mist.i.u-tokyo.ac.jp}}
  }
  \thanks{Institute for Chemical Reaction Design and Discovery (ICReDD), Hokkaido University, Sapporo, Hokkaido 001-0021, Japan.}
  \and
  Taihei Oki\footnotemark[1]
  \and
  Shinsaku Sakaue\footnotemark[1]
}
\begin{document}
\maketitle

\begin{abstract}
This paper considers master equations for Markovian kinetic schemes that possess the detailed balance property.
Chemical kinetics, as a prime example, often yields large-scale, highly stiff equations.
Based on chemical intuitions, Sumiya et al.~(2015) presented the rate constant matrix contraction (RCMC) method that computes approximate solutions to such intractable systems.

This paper aims to establish a mathematical foundation for the RCMC method.
We present a reformulated RCMC method in terms of matrix computation, deriving the method from several natural requirements.
We then perform a theoretical error analysis based on eigendecomposition and discuss implementation details caring about computational efficiency and numerical stability.
Through numerical experiments on synthetic and real kinetic models, we validate the efficiency, numerical stability, and accuracy of the presented method.
\\\\
\noindent\textbf{Keywords}: Chemical kinetics, differential equations, quasi-steady-state approximation, rate constant matrix contraction method
\end{abstract}

\section{Introduction}
A \emph{master equation} is a system of first-order ordinary differential equations (ODEs) describing the time evolution of the quantity or probability distribution on a finite number of states.
This paper focuses on master equations for \emph{Markovian kinetic schemes}, which are linear ODEs written in the form of
\begin{align}\label{def:master}
  \dot{x}(t) = Kx(t),
\end{align}
where $x(t)$ is an $n$-dimensional vector whose $i$th component $x_i(t)$ represents the quantity or probability of occupying state $i$ at time $t \in \R$ and $K$ is an $n \times n$ matrix whose $(i, j)$ entry $K_{ij}$ with $i \ne j$ is a non-negative value called the (\emph{transition}) \emph{rate constant} from state $j$ to state $i$.
Each diagonal entry in $K$ is the minus of the sum of off-diagonals in the column to which the diagonal belongs, i.e., $K_{jj} = -\sum_{i \ne j} K_{ij}$.
This equality implies that the equation satisfies the \emph{conservation law}, i.e., $\sum_{i=1}^n x_i(t)$ is preserved throughout time evolution.
The matrix $K$ is called a \emph{rate constant matrix}.

A master equation~\eqref{def:master} is said to satisfy the \emph{detailed balance condition} if there exists a positive vector $\pi = \prn{\pi_i}_{n} \in \R^n$ such that
\begin{align}\label{def:detailed-balance}
  K_{ij} \pi_j = K_{ji} \pi_i
\end{align}
holds for each pair of $i$ and $j$.
The condition~\eqref{def:detailed-balance} implies $K \pi = \zeros$.
If the system is irreducible (i.e., the supporting graph of $K$ is (strongly) connected), $\pi$ is a \emph{stationary} (or \emph{equilibrium}) \emph{distribution}, which is a unique distribution such that $x(t)$ converges to $\pi$ as $t \to \infty$ regardless of the initial distribution $p = {(p_i)}_{i} = x(0)$ with $\sum_{i=1}^n p_i = \sum_{i=1}^n \pi_i$, as expounded in~\cite{Anderson1991-ec}.

Master equations with detailed balance often arise from a monomolecular chemical kinetics simulation of reaction path networks.
Within this context, the states represent energetically stable atomic configurations in the potential energy surface, called \emph{equilibrium states} (EQs).
If the system is described by a canonical ensemble, the rate constant $K_{ij}$ from state $j$ to state $i$ with $i \ne j$ is given by
\begin{align}\label{eq:canonical}
  K_{ij} = \Gamma \frac{k_{\mathrm{B}}T}{h} \exp\prn{-\frac{E_{\text{$i$--$j$}} - E_j}{RT}},
\end{align}
where $k_\mathrm{B}$ is the Boltzmann constant, $h$ is the Planck constant, $R$ is the gas constant, $T$ is the temperature, $\Gamma$ is the transmission coefficient, and $E_j$ and $E_{\text{$i$--$j$}} = E_{\text{$j$--$i$}}$ are the potential energies of equilibrium state $j$ and the transition state between equilibrium states $i$ and $j$, respectively~\cite{Sumiya2015-br}.
The rate constant matrix $K$ determined by~\eqref{eq:canonical} satisfies the detailed balance condition~\eqref{def:detailed-balance} with the stationary distribution $\pi$ given by
\begin{align}
  \pi_i = \frac{
    \exp\prn[\big]{-\frac{E_i}{RT}}
  }{
    \sum_{j=1}^n \exp\prn[\big]{-\frac{E_j}{RT}}
  },
\end{align}
which is called the \emph{Boltzmann distribution}.
Rate constant matrices derived from the microcanonical ensemble also satisfy the detailed balance condition; see~\cite{Sumiya2017-qo}.

Chemical kinetics simulation often yields stiff master equations due to a wide range of time scales of reactions.
While the analytic solution of~\eqref{def:master} can be readily expressed as $x(t) = \e^{tK}p$ for any rate constant matrix $K$ and initial distribution $p$, solving stiff ODEs numerically has been recognized as a difficult task.
A classical remedy for the stiffness issue in kinetics simulation is to apply \emph{quasi-steady-state approximation} (QSSA), introduced in~\cite{Bodenstein1913-tc}.
Given a subset $S \subseteq [n] \defeq \set{1, \dotsc, n}$ of states that are regarded to be ``steady,'' i.e., $\dot{x}_i(t) \approx 0$ for $i \in S$, the QSSA replaces the left-hand side $\dot{x}_i(t)$ of~\eqref{def:master} with $0$ for every $i \in S$.
In other words, the QSSA constrains the solution to a subspace determined from $S$, which we call the \emph{QSSA subspace}.
The approximate equation is solved as a differential-algebraic equation (DAE), or one can eliminate the algebraic variables $x_i \; (i \in S)$ to get a new master equation of smaller size.
While QSSA is conceptually clear and easy to compute, the conservation law is no longer valid in the approximate DAE.

In recent years, large-scale, highly stiff master equations have emerged with the development of automatic chemical reaction pathway search techniques such as the anharmonic downward distortion following (ADDF) method~\cite{Ohno2004-iw} and the single-component artificial force induced reaction (SC-AFIR) method~\cite{Maeda2014-cm}.
To address such intractable equations, Sumiya et al.~\cite{Sumiya2015-br,Sumiya2017-qo} presented the \emph{rate constant matrix contraction} (RCMC) method for systems described by the canonical or microcanonical ensemble.
Based on QSSA, the RCMC method maintains a set $S \subseteq [n]$ of states regarded to be steady, which monotonically grows from the empty set.
For each $S$, the method computes a vector $q = {(q_i)}_{i} \in \R^n$ that approximates the solution $x(t)$ at time $t$ when the states in $S$ and $T \defeq [n] \setminus S$ can be regarded to be steady and transient, respectively.
The output $q$ preserves the component sum, i.e., $\sum_{i=1}^n p_i = \sum_{i=1}^n q_i$ holds, and the method runs fast for large-scale, stiff equations because it does not rely on numerical integration, unlike ordinary numerical methods.
The RCMC method has been successfully applied to the kinetics-based navigation in the SC-AFIR method~\cite{Sumiya2020-ti}.
The RCMC method was, however, designed based on chemical intuition, and its mathematical validity and properties have not been discussed thoroughly.

\paragraph{Contributions.}
This paper improves the RCMC method by establishing mathematical foundations, summarized as follows.

First, we reformulate the RCMC method in terms of matrix computations.
The reformulated method applies to any master equation~\eqref{def:master} whose rate constant matrix $K$ satisfies the detailed balance condition~\eqref{def:detailed-balance}.
We describe the method by means of matrix operations, while all calculation formulas in the original form~\cite{Sumiya2015-br,Sumiya2017-qo} are written with direct reference to the matrix entries.
In our reformulation, the output $q$ of the method is expressed as a matrix-vector multiplication $Vp$, where $p \in \R^n$ is a given initial value and $V \in \R^{n \times n}$ is a matrix approximating $\e^{tK}$ that can be explicitly constructed from the rate constant matrix $K$ and the set $S$ of steady states.
We observe that the matrix $V$ can be determined from natural requirements coming from properties of $\e^{tK}$ and the QSSA.
Furthermore, we propose a variant of $V$, which is another natural approximation of $\e^{tK}$.
The methods with original and modified $V$ are referred to as Types~A and~B, respectively.
While the matrix $V$ of Type~A is easier to compute, the matrix $V$ of Type~B is mathematically more tractable as it is indeed the orthogonal projection onto the QSSA subspace.

Second, we theoretically analyze the error between the approximate solution $q = Vp$ and the exact solution $x(t) = \e^{tK}p$.
Here, the error is measured in terms of the \emph{$\pi$-norm}; see \Cref{sec:rate-constant-matrices} for definition.
The derived error bounds depend on the spectral radii of matrices obtained from $K$ and $S$, as well as the coefficients in the expansion of $p$ with respect to an eigenbasis of $K$.
Roughly speaking, $q$ becomes a good approximation to $x(t)$ if $p$ consists of eigenvectors of $K$ with eigenvalues far from the spectral radii.
We give a better and simpler bound for Type~B than for Type~A.
Our error estimation for Type~B leads to a principled way to determine the optimal reference time $t^*$ such that $q$ should be regarded as an approximation of $x(t^*)$.

Third, based on our reformulation and subsequent error analysis, we discuss implementation details of the RCMC method.
We provide efficient implementations for calculating the optimal reference time $t^*$ and for the incremental updates of the LU decomposition within the algorithm.
We also address concerns regarding numerical stability.

Finally, we conduct numerical experiments using one synthetic dataset and two actual chemical reaction networks to demonstrate the efficiency, numerical stability, and accuracy of the RCMC method.
We observe that the RCMC method of both Types~A and~B using double precision is capable of producing approximate solutions within mere seconds.
The experimental results substantiate that the actual errors are within our theoretical bounds, subject to issues arising from the machine epsilon.
Furthermore, we compare three methods for calculating the optimal reference time $t^*$ and analyze the errors using $\pi$- and $L_\infty$-norms.

\paragraph{Previous and Related Work.}
The RCMC method was originally introduced in~\cite{Sumiya2015-br} as a method to reduce the master equation~\eqref{def:master} with the rate constant matrix derived from the canonical ensemble~\eqref{eq:canonical} into a smaller system by combining repeated applications of QSSA and an operation similar to \emph{lumping}~\cite{Kuo1969-vh,Wei1969-kr}, which is a classical reduction method for (generally nonlinear) master equations by grouping several states into a single state.
The RCMC method in the aforementioned form of computing approximate solutions was subsequently proposed in~\cite{Sumiya2017-qo}, inheriting the spirit of lumping in its design.

QSSA is classified as a reduction method for generally nonlinear master equations based on timescale analysis.
Other representative examples of methods based on timescale analysis include \emph{intrinsic low-dimensional manifold} (ILDM) method~\cite{Maas1992-ft,Maas1992-xl} and \emph{computational singular perturbation} (CSP)~\cite{Lam1989-zp}.
Both methods rely on the eigendecomposition of the rate constant matrix (or the Jacobian matrix for nonlinear systems), which is prohibitive for large-scale, highly stiff systems we are concerned with.
In contrast, the RCMC method can be executed without performing the eigendecomposition, while our error analysis depends on it.
See~\cite[Chapter~7]{Turanyi2014-ky} for a comprehensive survey of reduction techniques for kinetics equations.

\paragraph{Organization.}
The rest of this paper is organized as follows.
\Cref{sec:preliminaries} affords preliminaries on rate constant matrices, master equations, QSSA, and the RCMC method.
\Cref{sec:reformulating-matrix-update-formulas} reformulates the RCMC method in terms of matrix operations.
\Cref{sec:error-analysis} provides an error analysis for the RCMC method.
\Cref{sec:improved-rcmc-method} presents the overall framework of the RCMC method and discusses its implementation details.
\Cref{sec:experiments} shows the results of numerical experiments.
Finally, \Cref{sec:conclusion} concludes this paper.

\section{Rate Constant Matrix Contraction Method}\label{sec:preliminaries}
This section provides preliminaries on the master equation~\eqref{def:master} and rate constant matrices.

\paragraph{Notations.}
Let $\R$, $\Rp$, and $\Rpp$ denote the set of reals, non-negative reals, and positive reals, respectively.
For a positive integer $n$, define $[n] \defeq \set{1, \dotsc, n}$.

For $I \subseteq [n]$, let $\R^I$ be the $\card{I}$-dimensional real vector space having coordinates indexed by $I$.
We call $\R^I$ a \emph{coordinate subspace} of $\R^{J}$ for $I \subseteq J \subseteq [n]$.
For $I, J \subseteq [n]$, let $\R^{I \times J}$ be the set of $\card{I} \times \card{J}$ matrices whose rows and columns are indexed by $I$ and $J$, respectively.
We simply denote $\R^n \defeq \R^{[n]}$, $\R^{I \times n} \defeq \R^{I \times [n]}$, $\R^{n \times J} \defeq \R^{[n] \times J}$, and $\R^{n \times n} \defeq \R^{[n] \times [n]}$.
For a vector $y \in \R^n$, let $y_I$ denote the subvector of $y$ indexed by $I \subseteq [n]$, and $A_{IJ}$ the submatrix of $A \in \R^{n \times n}$ indexed by rows $I$ and columns $J$.
For brevity, we rewrite $y_{\set{i}}$, $A_{\set{i}J}$, $A_{I\set{j}}$, and $A_{\set{i}\set{j}}$ as $y_i$, $A_{iJ}$, $A_{Ij}$, and $A_{ij}$, respectively, for $I, J \subseteq [n]$ and $i, j \in [n]$.
For $I \subseteq [n]$, we mean by $A_{II}^{-1}$ the inverse of $A_{II}$.
The transpose of a vector $a$ and a matrix $A$ are denoted by $\trsp{a}$ and $\trsp{A}$, respectively.

Let $\ones_n$ be the all-one vector in the $n$-dimensional space and $I_n$ the $n \times n$ identity matrix.
For $J \subseteq [n]$, let $\ones_J$ and $I_J$ be the all-one vector in $\R^J$ and the identity matrix in $\R^{J \times J}$, respectively.
Let $\zeros$ and $O$ be the zero vector and the zero matrix of an appropriate dimension (size), respectively.

Vectors and matrices are called \emph{non-negative} if their components (entries) are non-negative.
For $y \in \R^n$, we mean by $y \ge \zeros$ that $y$ is a non-negative vector.
Similarly, $A \ge O$ for $A \in \R^{n \times m}$ means that $A$ is a non-negative matrix.
For a row or column vector $y$, $\diag(y)$ denotes the diagonal matrix obtained by arranging the components of $y$ on the diagonals.

Let $\Delta_n$ denote the $(n-1)$-dimensional probability simplex and $\Delta_n^\circ
$ be its relative interior, i.e., $\Delta_n = \set[\big]{y \in \Rp^n}[\onestr_n y = 1]$ and $\Delta_n^\circ  = \set[\big]{y \in \Rpp^n}[\onestr_n y = 1]$.
\subsection{Rate Constant Matrices}\label{sec:rate-constant-matrices}

Let $K \in \R^{n \times n}$ be an $n \times n$ matrix satisfying the following conditions:
\begin{enumerate}[{label={(RCM\arabic*)}}]
  \item $K_{ij} \ge 0$ for $i \ne j$,\label{item:rcm1}
  \item $\onestr_n K = \zerostr$,\label{item:rcm2}
  \item the detailed balance condition~\eqref{def:detailed-balance} holds for some $\pi \in \Delta_n^\circ$.\label{item:rcm3}
\end{enumerate}
We call such $K$ and $\pi$ a \emph{rate constant matrix} and a \emph{stationary distribution}, respectively (see~\cite{Wei1969-kr}).
A matrix satisfying~\ref{item:rcm1} is called a \emph{Metzler matrix}~\cite{Farina2000-fk} or an \emph{essentially non-negative matrix}~\cite{Varga2000-yp}.
Setting
\begin{align}\label{def:diag-pi}
  \Pi \defeq \diag(\pi),
\end{align}
we can simply write~\ref{item:rcm3} as $K\Pi = \Pi\trsp{K}$, i.e., $K$ is a self-adjoint matrix with respect to the \emph{$\pi$-inner product} in the following sense.

For $a, b \in \R^n$, we define the \emph{$\pi$-inner product} by
\begin{align}\retainlabel{def:pi-inner-product}
  \pipr{a, b} \defeq \trsp{a} \Pi^{-1} b.
\end{align}
The $\pi$-inner product induces the \emph{$\pi$-norm} $\pinorm{a} \defeq \sqrt{\pipr{a, a}}$.
Unless otherwise stated, we regard $\R^n$ as a metric vector space equipped with the $\pi$-inner product.
Similar arguments to the standard inner product hold for basic notions such as adjoint matrices and positive definiteness in this space.
Specifically, the \emph{adjoint matrix} of $A \in \R^{n \times n}$ is the matrix $A^*$ such that $\pipr{a, Ab} = \pipr{A^*a, b}$ for all $a, b \in \R^n$, and is explicitly written as $A^* = \Pi \trsp{A} \Pi^{-1}$.
A matrix $A \in \R^{n \times n}$ is \emph{self-adjoint} if $A = A^*$, i.e., $A\Pi = \Pi A^\top$.
Thus,~\ref{item:rcm3} is nothing but a statement that $K$ is self-adjoint.
See \Cref{sec:adjointness,sec:positive-semi-definiteness} for details of adjoint matrices and positive semi-definiteness on the $\pi$-inner product space.

Taking the transpose on both sides of~\ref{item:rcm2}, we get $\zeros = \trsp{K} \ones_n = \Pi^{-1}K\Pi \ones_n = \Pi^{-1}K\pi$ and thereby $K \pi = \zeros$.
Conversely, a self-adjoint matrix $K \in \R^{n \times n}$ satisfying $K \pi = \zeros$ meets~\ref{item:rcm2}.
Thus, $\onestr_n K = \zerostr$ and $K \pi = \zeros$ are equivalent under~\ref{item:rcm3}.

The eigenvalues of a self-adjoint matrix are real.
A self-adjoint matrix $A \in \R^{n \times n}$ is called \emph{positive semi-definite} if $\pipr{a, Aa}$ is non-negative for $a \in \R^n$, or equivalently, all eigenvalues of $A$ are non-negative.
It follows from the Gershgorin circle theorem that a rate constant matrix $K$ has non-positive eigenvalues, i.e., $K$ is \emph{negative semi-definite}.
Let $0 = \lambda_1 \ge \dotsb \ge \lambda_n$ be the eigenvalues of $K$ and $u_k$ an eigenvector corresponding to $\lambda_k$ for $k \in [n]$.
Since $K$ is self-adjoint, we can take $u_1, \dotsc, u_n$ such that $\pipr{u_k, u_l}$ is $1$ if $k = l$ and $0$ otherwise.
We can set $u_1 = \pi$ by $K\pi = \zeros$.
Note that $\pi$ satisfies $\pinorm{\pi}^2 = \trsp{\pi} \Pi^{-1} \pi = \onestr_n \pi = 1$, i.e., $\pi$ is normalized.
See~\Cref{sec:positive-semi-definiteness} for details on the semi-definiteness of self-adjoint matrices.

The minus of $K$ is a \emph{Z-matrix}, i.e., every off-diagonal entry is non-negative.
In addition, $-K$ has non-negative eigenvalues.
Such a matrix is called an \emph{M-matrix}~\cite[Chapter~6]{Berma1994-xm}.
Principal submatrices of an M-matrix are M-matrices again, and the inverse of a nonsingular M-matrix is a non-negative matrix.
Therefore, $K_{SS}^{-1} \le O$ holds for $S \subseteq [n]$, provided that $K_{SS}$ is nonsingular.

The matrix $L \defeq -K\Pi$ is a (weighted) \emph{graph Laplacian matrix}, i.e., a symmetric Z-matrix such that $L\ones_n = \zeros$.
Conversely, given a graph Laplacian matrix $L \in \R^{n \times n}$ and $\pi \in \Delta_n^\circ$, we can construct a rate constant matrix $K$ by $K \defeq -L\Pi^{-1}$.

\subsection{Master Equations with Detailed Balance}\label{sec:master-equations-with-detailed-balance}
Consider the master equation~\eqref{def:master} associated with a rate constant matrix $K$ with stationary distribution $\pi$.
Letting $p \in \R^n$ be an initial value $x(0)$, the solution $x(t)$ of~\eqref{def:master} can be analytically represented by $x(t) = \e^{tK}p$, where $\e^{tK} \defeq \sum_{d=0}^\infty \frac{1}{d!} {(tK)}^{d}$.
The matrix exponential $\e^{tK}$ enjoys the following properties:

\begin{enumerate}[{label={(E\arabic*)}}]
  \item $\e^{tK} \ge O$ for $t \ge 0$,\label{item:e1}
  \item $\onestr_n \e^{tK} = \onestr_n$,\label{item:e2}
  \item $\e^{tK}$ is self-adjoint.\label{item:e3}
\end{enumerate}

We can directly derive~\ref{item:e2} and~\ref{item:e3} from~\ref{item:rcm2} and~\ref{item:rcm3}, respectively.
Property~\ref{item:e1} follows from the general fact that the matrix exponential of a Metzler matrix is non-negative~\cite[Section~8.2]{Varga2000-yp}.
Note that~\ref{item:e2} is equivalent to $\e^{tK} \pi = \pi$ under~\ref{item:e3}, which can be shown in the same way as the equivalence of~\ref{item:rcm2} and $K\pi = \zeros$ under~\ref{item:rcm3}.
These properties on $\e^{tK}$ give rise to the following well-known properties on the solution $x(t)$ of~\eqref{def:master}.
We here describe a short proof to clarify from which property of $\e^{tK}$ each claim is derived.

\begin{proposition}\label{prop:solution}
  Let $x(t)$ be the solution of~\eqref{def:master} with $x(0) = p \in \R^n$.
  If $p \in \Delta_n$, then $x(t) \in \Delta_n$ for all $t \ge 0$.
  If $p = \pi$, then $x(t) = \pi$ for all $t \ge 0$.
\end{proposition}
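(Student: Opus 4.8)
The plan is to substitute the analytic solution $x(t) = \e^{tK}p$ and then read off each assertion from the appropriate property among~\ref{item:e1}--\ref{item:e3}, handling the two cases in turn.

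For the first claim, I would verify the two defining conditions of membership in $\Delta_n$ separately. Non-negativity is immediate: since $p \in \Delta_n$ gives $p \ge \zeros$ and~\ref{item:e1} gives $\e^{tK} \ge O$ for $t \ge 0$, the product $x(t) = \e^{tK}p$ is a non-negative matrix applied to a non-negative vector, whence $x(t) \ge \zeros$. Preservation of the component sum follows from~\ref{item:e2}: left-multiplying $x(t) = \e^{tK}p$ by $\onestr_n$ yields $\onestr_n x(t) = \onestr_n \e^{tK} p = \onestr_n p = 1$, where the last equality uses $p \in \Delta_n$. Combining the two gives $x(t) \in \Delta_n$.

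For the second claim, I would invoke the equivalence noted just before the proposition, namely that~\ref{item:e2} is equivalent to $\e^{tK}\pi = \pi$ under~\ref{item:e3}. Setting $p = \pi$ then gives $x(t) = \e^{tK}\pi = \pi$ for all $t \ge 0$. Alternatively, one can argue directly from $K\pi = \zeros$ (a consequence of~\ref{item:rcm3}): in the series $\e^{tK}\pi = \sum_{d=0}^\infty \frac{1}{d!}(tK)^d \pi$ every term with $d \ge 1$ vanishes, leaving only the $d = 0$ term $\pi$.

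I do not anticipate any genuine obstacle, since the entire content has already been packaged into properties~\ref{item:e1}--\ref{item:e3}; the proof merely amounts to pairing each assertion with the property that delivers it. The only point requiring mild care is that simplex membership is a conjunction of non-negativity and unit component sum, so both must be checked for the first claim, each drawing on a distinct property (\ref{item:e1} for the former, \ref{item:e2} for the latter).
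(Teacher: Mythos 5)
Your proof is correct and follows essentially the same route as the paper: non-negativity from~\ref{item:e1}, conservation of the component sum from~\ref{item:e2}, and stationarity of $\pi$ from the equivalence of~\ref{item:e2} and $\e^{tK}\pi = \pi$ under~\ref{item:e3}. Your alternative series argument $\e^{tK}\pi = \sum_{d=0}^\infty \frac{1}{d!}(tK)^d\pi = \pi$ via $K\pi = \zeros$ is a valid bonus, but the core reasoning matches the paper's proof exactly.
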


\begin{proof}
  By~\ref{item:e1}, we have $x(t) = \e^{tK}p \ge \zeros$ for $p \ge \zeros$ and $t \ge 0$.
  By~\ref{item:e2}, it holds that $\onestr_n x(t) = \onestr_n \e^{tK} p = \onestr_n p = 1$ if $\onestr_n p = 1$.
  By~\ref{item:e2} and~\ref{item:e3}, we have $\e^{tK} \pi = \pi$, which implies $x(t) = \e^{tK} \pi = \pi$ for $p = \pi$.
\end{proof}

Another important property of~\eqref{def:master} is the \emph{linearity} of solutions.
That is, if $x(t)$ and $y(t)$ are the solutions of~\eqref{def:master} with initial values $x(0) = p$ and $y(0) = q$, respectively, then $ax(t) + by(t)$ is the solution of~\eqref{def:master} with initial value $ap + bq$ for $a, b \in \R$.
We also remark that the eigenvalues of $\e^{tK}$ with $t \ge 0$ are in $[0, 1]$ because $K$ is negative semi-definite.

\subsection{Quasi-Steady-State Approximation}\label{sec:qssa}
QSSA, originated in the work of Bodenstein~\cite{Bodenstein1913-tc}, is a classical tool for reducing the number of variables and stiffness in a (generally nonlinear) master equation.
We here describe the method specialized to our linear setting~\eqref{def:master}.

Suppose that we have a bipartition $\set{S, T}$ of $[n]$ separating the components of $x(t)$ into \emph{fast variables} $x_S(t)$ and \emph{slow variables} $x_T(t)$.
With this bipartition, the master equation~\eqref{def:master} can be displayed as
\begin{align}\label{eq:master-block}
  \left\{\begin{aligned}
    \dot{x}_S(t) &= K_{SS}x_S(t) + K_{ST}x_T(t), \\
    \dot{x}_T(t) &= K_{TS}x_S(t) + K_{TT}x_T(t).
  \end{aligned}\right.
\end{align}
Now, $x_S(t)$ is approximately regarded as ``steady state,'' i.e., $\dot{x}_S(t) \approx \zeros$.
The QSSA replaces the left-hand side $\dot{x}_S(t)$ of the first equation in~\eqref{eq:master-block} with $\zeros$.
Namely, letting $\bar{x}(t)$ be a new variable vector, we can formulate the QSSA of~\eqref{eq:master-block} as
\begin{align}\label{eq:qssa}
  \left\{\begin{aligned}
                \zeros &= K_{SS}\bar{x}_S(t) + K_{ST}\bar{x}_T(t), \\
    \dot{\bar{x}}_T(t) &= K_{TS}\bar{x}_S(t) + K_{TT}\bar{x}_T(t).
  \end{aligned}\right.
\end{align}
The system~\eqref{eq:qssa} consists of purely algebraic equations and differential equations.
Such a system of equations is called a \emph{differential-algebraic equation} (DAE).
The algebraic equations constrain the solution to the \emph{QSSA subspace}
\begin{align}\label{def:slow-space}
  W \defeq \set[\big]{y \in \R^n}[K_{SS}y_S + K_{ST}y_T = \zeros].
\end{align}

If $K_{SS}$ is nonsingular, $\bar{x}_S(t)$ can be determined from $\bar{x}_T(t)$ by
\begin{align}\label{eq:x_S-from-x_T}
  \bar{x}_S(t) = -K_{SS}^{-1} K_{ST} \bar{x}_T(t).
\end{align}
Substituting~\eqref{eq:x_S-from-x_T} back into the differential equation in~\eqref{eq:qssa}, we obtain a differential equation involving only $\bar{x}_T(t)$:
\begin{align}\label{eq:qssa-eliminated}
  \dot{\bar{x}}_T(t) = \prn[\big]{K_{TT} - K_{TS}K_{SS}^{-1}K_{ST}} \bar{x}_T(t).
\end{align}
The coefficient matrix $D \defeq K_{TT} - K_{TS}K_{SS}^{-1}K_{ST}$ in~\eqref{eq:qssa-eliminated} is called the \emph{Schur complement} of $K_{SS}$ in $K$.
The Schur complement $D$ is again a rate constant matrix in the sense that it satisfies~\ref{item:rcm1}--\ref{item:rcm3} with stationary distribution proportional to $\pi_T$.
Hence, QSSA can be applied repeatedly.
From the property of the Schur complement, the equation obtained by applying the QSSA twice with $S = I$ and $J$ coincides with the single application of the QSSA with $S = I \cup J$ for any disjoint subsets $I, J \subseteq [n]$.

\section{RCMC Method}\label{sec:reformulating-matrix-update-formulas}

In this section, we describe the RCMC method in terms of matrix operations.
Readers interested in the original form of the method given by Sumiya et al.~\cite{Sumiya2015-br,Sumiya2017-qo} and its correspondence to our form are referred to \Cref{sec:original-rcmc}.

\subsection{Method Description}

\begin{algorithm}[tb]
	\caption{The RCMC method}\label{alg:improved}
	\begin{algorithmic}[1]
    \Input A rate constant matrix $K \in \R^{n \times n}$, an initial value $p \in \Delta_n$, and an end time $t_{\mathrm{max}} \in \Rp$.
    \Output $t^{(0)}, t^{(1)}, \dotsc \in \Rp$ and $q^{(0)}, q^{(1)}, \dotsc \in \R^n$.
    \State Output $t^{(0)} \defeq 0$ and $q^{(0)} \defeq p$
    \State $S^{(0)} \defeq \emptyset$, $T^{(0)} \defeq [n]$, $D^{(0)} \defeq K$
    \For{$k = 1, 2, \dotsc$}
      \State Take $j^{(k)} \in \argmax\set[\big]{\abs[\big]{D^{(k-1)}_{jj}}}[j \in T^{(k-1)}]$ {\label{line:improved-j}}
      \State $S^{(k)} \defeq S^{(k-1)} \cup \set[\big]{j^{(k)}}$ and $T^{(k)} \defeq T^{(k-1)} \setminus \set[\big]{j^{(k)}}$
      \State $\displaystyle D^{(k)} \defeq D^{(k-1)} - \frac{D^{(k-1)}_{T^{(k)}j^{(k)}}D^{(k-1)}_{j^{(k)}T^{(k)}}}{D^{(k-1)}_{j^{(k)}j^{(k)}}}$ {\label{line:improved-D}}
      \State Compute $t^{(k)}$ by~\eqref{def:t-k-diag} or~\eqref{def:t-k-eig} {\label{line:improved-t}}
      \If{$t^{(k)} > t_{\mathrm{max}}$} {\label{line:improved-if}}
        \State \textbf{break} {\label{line:improved-break}}
      \EndIf
      \State Output $t^{(k)}$ and $\displaystyle q^{(k)} \defeq \proj(Vp)$, where $V$ is the matrix~\eqref{def:V} of a designated type (Type~A or~B) {\label{line:improved-q}}
    \EndFor
	\end{algorithmic}
\end{algorithm}

\Cref{alg:improved} describes the RCMC method.
Given a rate constant matrix $K \in \R^{n \times n}$ with stationary distribution $\pi \in \Delta_n^\circ$, an initial value $p \in \R^n$, and the end time $t_{\mathrm{max}} \in \Rp$ as input, the method yields a sequence of non-negative reals $0 = t^{(0)} < t^{(1)} < t^{(2)} < \dotsb \le t_{\mathrm{max}}$ and vectors $q^{(0)}, q^{(1)}, \dotsc \in \R^n$ such that $q^{(k)}$ approximates the solution $x\prn[\big]{t^{(k)}}$ of the master equation~\eqref{def:master} with initial value $x(0) = p$ for each $k \ge 0$.

The RCMC method maintains a growing set $S^{(k)} \subseteq [n]$, its complement $T^{(k)} = [n] \setminus S^{(k)}$, and a matrix $D^{(k)} \in \R^{T^{(k)} \times T^{(k)}}$, where the superscripts $(k)$ are labeled to refer to the values at the $k$th iteration.
As with QSSA, $S^{(k)}$ and $T^{(k)}$ represents the sets of ``steady'' and ``transient'' states, respectively.
The matrix $D^{(k)}$ is the Schur complement of $K_{S^{(k)}S^{(k)}}$ in $K$, which is the coefficient matrix of the equation~\eqref{eq:qssa-eliminated} obtained by applying the QSSA with bipartition $\set[\big]{S^{(k)}, T^{(k)}}$.

In the $k$th iteration, the algorithm finds the index of a diagonal entry in $D^{(k-1)}$ with the largest absolute value and designates it as the next steady state $j^{(k)}$ (\cref{line:improved-j}).
This way of selecting steady states stems from a standard heuristic used in QSSA that considers $1 / \abs[\big]{D^{(k-1)}_{jj}}$ as the ``timescale'' of state $j \in T^{(k-1)}$ and selects the minimum-timescale state as the next steady state~\cite{Turanyi2014-ky}.
Additional justification of this method, based on error analysis and the connection to submodular function maximization, is discussed in \Cref{sec:map-inference-dpp}.

We should choose the reference time $t^{(k)}$ at \cref{line:improved-t} so that states in $S = S^{(k)}$ and $T = T^{(k)}$ can be considered steady and transient, respectively, at time $t^{(k)}$.
Following the time-scale heuristics in QSSA, one might set $t^{(k)}$ as
\begin{align}\label{def:t-k-diag}
  t^{(k)} \gets t_\mathrm{diag}^{(k)} \coloneqq \frac{1}{\abs[\big]{D^{(k-1)}_{j^{(k)}j^{(k)}}}}.
\end{align}
The original description of the RCMC method~\cite{Sumiya2015-br,Sumiya2017-qo} essentially adopts this choice of $t^{(k)}$.
In contrast, we will show in \Cref{sec:error-analysis} that the following choice of $t^{(k)}$ is better in terms of the error bound:
\begin{align}\label{def:t-k-eig}
  t^{(k)} \gets t_\mathrm{eigen}^{(k)}
  \defeq
  \frac{\ln 2}{\sqrt{\sigma\prn[\big]{K_{SS}} \rho\prn[\big]{D^{(k)}}}},
\end{align}
where $\rho(A)$ and $\sigma(A)$ for a square matrix $A$ denote the maximum and minimum absolute values of an eigenvalue of $A$, respectively.

The core of the RCMC method is the computation of the approximate solution $q^{(k)}$ at \cref{line:improved-q}.
Instead of the matrix exponential $\e^{tK}$ with $t = t^{(k)}$, we multiply $p$ by a matrix $V \in \R^{n \times n}$ that serves as an approximation to $\e^{tK}$.
We now introduce two variants of $V$, referred to as Types~A and~B, as follows:
\begin{gather}
  V = \begin{pmatrix}
    K_{SS}^{-1} K_{ST} V_{TT} K_{TS} K_{SS}^{-1} & -K_{SS}^{-1} K_{ST} V_{TT} \\
    -V_{TT} K_{TS} K_{SS}^{-1} & V_{TT}
  \end{pmatrix}\label{def:V}
\shortintertext{with}
  V_{TT} \defeq \begin{cases*}
    \diag \prn[\big]{\onestr_T M}^{-1} & (Type~A), \\
    M^{-1} & (Type~B),
  \end{cases*}\label{def:VTT}
\shortintertext{where}
  M \defeq I_T + K_{TS}K_{SS}^{-2}K_{ST} \in \R^{T \times T}.\label{def:M}
\end{gather}
Note that~\eqref{def:V} is displayed in such a way that blocks with rows (resp.\ columns) indexed by $S$ are placed above (resp.\ in the left of) those indexed by $T$.

\Cref{alg:improved} of Type~A indeed corresponds to the original RCMC method presented by Sumiya et al.~\cite{Sumiya2015-br,Sumiya2017-qo}; see \Cref{sec:original-rcmc} for this correspondence.
We see in \Cref{sec:apprixmating-matrix-exponential} that~\eqref{def:V} is obtained from natural requirements as an approximation of $\e^{tK}$ and the idea of QSSA.\@
The RCMC method of Type~A is practically advantageous in computational efficiency and numerical stability (see \Cref{sec:improved-rcmc-method}).
Meanwhile, the RCMC method of Type~B has several mathematically desirable properties in comparison with Type~A.
For example, the matrix $V$ of Type~B is the ($\pi$-norm) projection onto the QSSA subspace~\eqref{def:slow-space} with respect to $\set{S, T}$; see \Cref{sec:positive-semi-definiteness-of-V}.
In \Cref{sec:error-analysis}, we provide better and simpler error bounds for Type~B than Type~A.

The vector $Vp$ is guaranteed to be in $\Delta_n$ for Type~A (\cref{prop:Vp-in-Delta}), whereas it is not for Type~B.
Thus, after calculating $q \coloneqq Vp$, we compute its ($\pi$-norm) projection $\proj(q)$ onto $\Delta_n$, i.e.,
\begin{align}\label{def:proj}
  \proj(q) \defeq \argmin \set{\pinorm{q' - q}}[q' \in \Delta_n].
\end{align}
See \Cref{sec:projection} for the projection algorithm.
We will show in \Cref{prop:pythagorean} that the projection does not enlarge the distance from the exact solution $x\prn[\big]{t^{(k)}}$.

\subsection{Approximating Matrix Exponential}\label{sec:apprixmating-matrix-exponential}
We derive the matrix $V$ in the form of~\eqref{def:V} from several requirements that should be satisfied for $V$ to approximate $\e^{tK}$.
From~\ref{item:e1}--\ref{item:e3}, it is natural to expect the following properties to hold:
\begin{enumerate}[{label={(V\arabic*)}}]
  \item $V \ge O$,\label{item:v1}
  \item $\onestr_n V = \onestr_n$,\label{item:v2}
  \item $V$ is self-adjoint.\label{item:v3}
\end{enumerate}
Recall that~\ref{item:v2} is equivalent to $V\pi = \pi$ under~\ref{item:v3}.
Property~\ref{item:v1} is desirable to guarantee $Vp \ge \zeros$ for $p \ge \zeros$.
As seen in the proof of \Cref{prop:solution}, $\onestr_n V = \onestr_n$ and $V \pi = \pi$ correspond to the conservation law and the stationarity of $\pi$, respectively.
Furthermore, we require $V$ to satisfy the following:
\begin{enumerate}[{label={(V\arabic*)}, start=4}]
  \item $\Im V$ is contained in the QSSA subspace $W$ (defined by~\eqref{def:slow-space}) with respect to the bipartition $\set{S, T}$ of $[n]$.\label{item:v4}
\end{enumerate}
This property comes from the idea of QSSA; that is, if $V$ is a good approximation of $\e^{tK}$ and QSSA works well with $\set{S, T}$, then $Vp \approx \e^{tK}p = x(t)$ should be close to the QSSA subspace $W$.

We now design a matrix $V$ so that it satisfies these properties.
We first show that the form~\eqref{def:V} of $V$ is equivalent to~\ref{item:v3} and~\ref{item:v4}.

\begin{proposition}\label{prop:V-v1-v3}
  A matrix $V \in \R^{n \times n}$ satisfies~\ref{item:v3} and~\ref{item:v4} if and only if $V$ is in the form of~\eqref{def:V} with self-adjoint $V_{TT} \in \R^{T \times T}$.
\end{proposition}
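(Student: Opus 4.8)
The plan is to translate each of~\ref{item:v3} and~\ref{item:v4} into conditions on the four blocks of
\[
  V = \begin{pmatrix} V_{SS} & V_{ST} \\ V_{TS} & V_{TT} \end{pmatrix}
\]
(with the rows and columns indexed by $S$ placed first, as in~\eqref{def:V}), and then to check that the two sets of block conditions together are equivalent to the explicit form~\eqref{def:V}. Throughout I abbreviate $P \defeq -K_{SS}^{-1}K_{ST}$ and $B \defeq K_{TS}K_{SS}^{-1}$, where $K_{SS}$ is invertible because it is a principal submatrix of the M-matrix $-K$. I also write $C^*$ for the adjoint of a rectangular block $C \in \R^{I \times J}$ between the corresponding inner-product spaces, given by $C^* = \Pi_{JJ}\,\trsp{C}\,\Pi_{II}^{-1}$, the natural analogue of the square-case formula $A^* = \Pi\trsp{A}\Pi^{-1}$.

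First I would unpack~\ref{item:v4}. The constraint defining the QSSA subspace $W$ in~\eqref{def:slow-space} rearranges, exactly as in~\eqref{eq:x_S-from-x_T}, to $y_S = P y_T$; hence $\Im V \subseteq W$ says $(Vz)_S = P(Vz)_T$ for every $z \in \R^n$. Splitting $z$ into its $S$- and $T$-parts, this single vector identity is equivalent to the two block identities $V_{SS} = P V_{TS}$ and $V_{ST} = P V_{TT}$; in other words,~\ref{item:v4} forces the two upper blocks to be expressed through the two lower ones.

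Next I would unpack~\ref{item:v3}. Since $\Pi = \diag(\pi)$ is block-diagonal, writing $V = V^* = \Pi\trsp{V}\Pi^{-1}$ block by block shows that self-adjointness of $V$ amounts to: $V_{SS}$ and $V_{TT}$ are self-adjoint (with respect to $\pi_S$ and $\pi_T$), together with the cross relation $V_{ST} = (V_{TS})^*$. The one nontrivial ingredient linking these to~\eqref{def:V} is the identity
\[
  B^* = K_{SS}^{-1}K_{ST} = -P, \qquad\text{equivalently}\qquad P^* = -B .
\]
This is precisely where detailed balance~\ref{item:rcm3} enters: expanding $B^* = \Pi_{SS}\trsp{B}\Pi_{TT}^{-1}$ and substituting the block consequences of $K\Pi = \Pi\trsp{K}$ — namely $(\trsp{K_{SS}})^{-1} = \Pi_{SS}^{-1}K_{SS}^{-1}\Pi_{SS}$ from self-adjointness of $K_{SS}$, and $\trsp{K_{TS}} = \Pi_{SS}^{-1}K_{ST}\Pi_{TT}$ from the off-diagonal relation $K_{ST}\Pi_{TT} = \Pi_{SS}\trsp{K_{TS}}$ — makes the $\Pi$ factors telescope down to $K_{SS}^{-1}K_{ST}$.

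With these reductions both directions become short substitutions. Assuming~\ref{item:v3} and~\ref{item:v4}, the relation $V_{ST} = P V_{TT}$ and self-adjointness of $V_{TT}$ give $V_{TS} = (V_{ST})^* = (P V_{TT})^* = V_{TT}^* P^* = -V_{TT}B$, and substituting into $V_{SS} = P V_{TS}$ recovers all four blocks of~\eqref{def:V} with $V_{TT}$ self-adjoint. Conversely, if $V$ has the form~\eqref{def:V} with $V_{TT}$ self-adjoint, then~\ref{item:v4} holds by inspection because $V_{ST} = P V_{TT}$ and $V_{SS} = P V_{TS}$, while~\ref{item:v3} holds because $V_{TT}$ is self-adjoint by hypothesis, $V_{SS} = B^* V_{TT} B$ is self-adjoint since $(B^*V_{TT}B)^* = B^* V_{TT}^* B$, and $V_{ST} = -B^* V_{TT} = -(V_{TT}B)^* = (V_{TS})^*$. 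The only real obstacle is the adjoint bookkeeping under the $\Pi$-weighting condensed in $B^* = K_{SS}^{-1}K_{ST}$; once this sole consequence of detailed balance is secured, the rest is mechanical block manipulation.
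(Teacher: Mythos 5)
Your proof is correct and takes essentially the same route as the paper's: both unpack~\ref{item:v4} into two block identities (yours are the paper's equations premultiplied by $-K_{SS}^{-1}$), combine them with the blockwise form of~\ref{item:v3} and the detailed-balance relations $K_{TS}^* = K_{ST}$ and $\prn{K_{SS}^{-1}}^* = K_{SS}^{-1}$ to express all blocks of $V$ through $V_{TT}$, and then verify the converse by direct substitution. The only blemish is the parenthetical claim that $K_{SS}$ is invertible because it is a principal submatrix of the M-matrix $-K$: M-matrices can be singular ($-K$ itself is, since $K\pi = \zeros$), so nonsingularity of $K_{SS}$ should simply be treated as the standing assumption under which~\eqref{def:V} makes sense, as the paper implicitly does.
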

\begin{proof}
  We first rephrase~\ref{item:v4}.
  The statement $Vp \in W$ is written down as
  \[
    \zeros
    = K_{SS} {(Vp)}_S + K_{ST} {(Vp)}_T
    = \prn[\big]{K_{SS} V_{SS} + K_{ST} V_{TS}} p_S + \prn[\big]{K_{SS} V_{ST} + K_{ST} V_{TT}} p_T.
  \]
  Since this equation must be satisfied for all $p \in \R^n$,~\ref{item:v4} is equivalent to
  \begin{align}
    K_{SS} V_{SS} + K_{ST} V_{TS} &= O,\label{eq:V-qssa-S}\\
    K_{SS} V_{ST} + K_{ST} V_{TT} &= O.\label{eq:V-qssa-T}
  \end{align}

  Suppose that $V$ satisfies~\ref{item:v3} and~\ref{item:v4}.
  Solving~\eqref{eq:V-qssa-T} for $V_{ST}$, we obtain $V_{ST} = -K_{SS}^{-1} K_{ST} V_{TT}$.
  Now we have $V_{TS} = -V_{TT} K_{TS} K_{SS}^{-1}$ due to~\ref{item:v3}.
  Substituting this into~\eqref{eq:V-qssa-S}, we have $V_{SS} = -K_{SS}^{-1} K_{ST} V_{TS} = K_{SS}^{-1} K_{ST} V_{TT} K_{TS} K_{SS}^{-1}.$
  Thus, $V$ must be in the form of~\eqref{def:V}.
  In addition, the self-adjointness of the principal submatrix $V_{TT}$ is necessary for~\ref{item:v3}.

  Conversely, suppose that a matrix $V$ is in the form of~\eqref{def:V} with self-adjoint $V_{TT}$.
  The self-adjointness~\ref{item:v3} of $V$ can be directly verified by using $K_{ST}^* = K_{TS}$, $K_{TS}^* = K_{ST}$ and the self-adjointness of $K_{SS}^{-1}$ and $V_{TT}$ to hold.
  The condition~\ref{item:v4} can be seen by checking~\eqref{eq:V-qssa-S} and~\eqref{eq:V-qssa-T}.
\end{proof}

By \Cref{prop:V-v1-v3}, the matrix $V$ satisfying~\ref{item:v3} and~\ref{item:v4} is uniquely determined from $V_{TT}$.
We next rephrase~\ref{item:v1} and~\ref{item:v2} to derive conditions on $V_{TT}$.
Recall that $M \in \R^{T \times T}$ is defined by~\eqref{def:M}.

\begin{proposition}\label{prop:V-v0-v2}
  For $V \in \R^{n \times n}$ in the form of~\eqref{def:V}, the following hold.
  \begin{enumerate}
    \item The matrix $V$ satisfies~\ref{item:v1} if and only if $V_{TT} \ge O$.\label{item:V-v0}
    \item The matrix $V$ satisfies~\ref{item:v2} if and only if $\onestr_T M V_{TT} = \onestr_T$.\label{item:V-v2}
  \end{enumerate}
\end{proposition}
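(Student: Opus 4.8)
The two statements are logically independent, so I would prove each by a direct computation on the block form~\eqref{def:V}, invoking only two structural facts about $K$: the sign pattern of its blocks for the first statement and the column-sum condition~\ref{item:rcm2} for the second. Throughout I would use that $K_{ST}, K_{TS} \ge O$ by~\ref{item:rcm1}, and that $K_{SS}^{-1} \le O$, which holds because $-K_{SS}$ is a principal submatrix of the M-matrix $-K$, as recalled in \Cref{sec:rate-constant-matrices}.

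For the non-negativity statement, the ``only if'' direction is immediate: since $V_{TT}$ is literally the lower-right block of $V$ in~\eqref{def:V}, the assumption $V \ge O$ forces $V_{TT} \ge O$. For the ``if'' direction I would assume $V_{TT} \ge O$ and check the three remaining blocks. The crux is that $-K_{SS}^{-1}K_{ST} \ge O$ and $-K_{TS}K_{SS}^{-1} \ge O$, each being a product of a non-negative and a non-positive matrix. Writing the off-diagonal blocks as $V_{ST} = (-K_{SS}^{-1}K_{ST})V_{TT}$ and $V_{TS} = V_{TT}(-K_{TS}K_{SS}^{-1})$, and the diagonal block as $V_{SS} = (-K_{SS}^{-1}K_{ST})\,V_{TT}\,(-K_{TS}K_{SS}^{-1})$ (the two sign flips cancel), each is then a product of non-negative matrices and hence non-negative. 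Thus $V \ge O$.

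For the column-sum statement, the key preliminary identity comes from restricting~\ref{item:rcm2} to the columns indexed by $S$, namely $\onestr_S K_{SS} + \onestr_T K_{TS} = \zerostr_S$, and right-multiplying by $K_{SS}^{-1}$ to get $\onestr_S = -\onestr_T K_{TS} K_{SS}^{-1}$. I would then compute $\onestr_n V$ blockwise. For the $T$-columns,
\[
  \onestr_S V_{ST} + \onestr_T V_{TT}
  = \prn[\big]{\onestr_T - \onestr_S K_{SS}^{-1} K_{ST}} V_{TT}
  = \onestr_T M V_{TT},
\]
where the last equality substitutes $\onestr_S K_{SS}^{-1} = -\onestr_T K_{TS} K_{SS}^{-2}$ and uses the definition~\eqref{def:M} of $M$. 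Hence the $T$-block of $\onestr_n V$ equals $\onestr_T$ exactly when $\onestr_T M V_{TT} = \onestr_T$, which already gives both implications for that block.

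It remains to confirm that the $S$-columns impose no extra condition. A parallel computation gives $\onestr_S V_{SS} + \onestr_T V_{TS} = -\onestr_T M V_{TT} K_{TS} K_{SS}^{-1}$, so whenever $\onestr_T M V_{TT} = \onestr_T$ this reduces to $-\onestr_T K_{TS} K_{SS}^{-1} = \onestr_S$ by the preliminary identity, i.e.\ the $S$-block is automatically correct. Combined with the $T$-block analysis, this yields the claimed equivalence. The main obstacle is purely bookkeeping: keeping the minus signs straight in the $S$-block computation and recognizing that the $S$-columns contribute no independent constraint, which is precisely what makes the final condition clean enough to involve only $M$ and $V_{TT}$.
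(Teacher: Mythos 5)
Your proposal is correct and follows essentially the same route as the paper's proof: the non-negativity part uses exactly the M-matrix fact $-K_{SS}^{-1} \ge O$ together with~\ref{item:rcm1} to check each block, and the column-sum part performs the same blockwise computation of $\onestr_n V$, invoking the identity $\onestr_S = -\onestr_T K_{TS} K_{SS}^{-1}$ from~\ref{item:rcm2} to show the $S$-columns impose no extra constraint and to rewrite $\onestr_T - \onestr_S K_{SS}^{-1} K_{ST}$ as $\onestr_T M$. The only difference is cosmetic: you deploy the~\ref{item:rcm2} identity up front, whereas the paper first reduces $\onestr_n V = \onestr_n$ to the $T$-column equation and substitutes the identity at the end.
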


\begin{proof}
  \ref{item:V-v0} The necessity is obvious.
  We show the sufficiency.
  Since $-K$ is an M-matrix, $-K_{SS}^{-1}$ is non-negative (see \Cref{sec:rate-constant-matrices}).
  Additionally, $K_{ST}$ and $K_{TS}$ are also non-negative by~\ref{item:rcm1}.
  Thus, $V \ge O$ follows from $V_{TT} \ge O$.

  \ref{item:V-v2} Using~\eqref{def:V}, we can express the equality $\onestr_n V = \onestr_n$ as
  \begin{align}
    \onestr_S K_{SS}^{-1} K_{ST} V_{TT} K_{TS} K_{SS}^{-1} - \onestr_T V_{TT} K_{TS} K_{SS}^{-1} &= \onestr_S,\label{eq:v2-rephrase-S}\\
    -\onestr_S K_{SS}^{-1} K_{ST} V_{TT} + \onestr_T V_{TT} &= \onestr_T.\label{eq:v2-rephrase-T}
  \end{align}
  If~\eqref{eq:v2-rephrase-T} is satisfied,~\eqref{eq:v2-rephrase-S} is simplified by using~\ref{item:rcm2} as
  \begin{align}\label{eq:ones_S-and-T}
    -\onestr_T K_{TS} K_{SS}^{-1} = \onestr_S.
  \end{align}
  Thus, $\onestr_n V = \onestr_n$ is equivalent to~\eqref{eq:v2-rephrase-T}.
  Substituting~\eqref{eq:ones_S-and-T} into~\eqref{eq:v2-rephrase-T}, we obtain $\onestr_T = \prn[\big]{\onestr_T - \onestr_S K_{SS}^{-1} K_{ST}} V_{TT} = \onestr_T M V_{TT}$.
\end{proof}

The matrices $V_{TT}$ of Types~A and~B given in~\eqref{def:VTT} both satisfy the second condition in \Cref{prop:V-v0-v2}.
Hence,~\ref{item:v2} holds on both types.
Regarding the first condition in \Cref{prop:V-v0-v2}, the matrix $V_{TT}$ of Type~A satisfies it due to the non-negativity of $M = I_T + K_{TS}K_{SS}^{-2}K_{ST}$, which leads us to the following proposition.

\begin{proposition}\label{prop:Vp-in-Delta}
  Let $V$ be the matrix~\eqref{def:V} of Type~A.
  Then, $Vp \in \Delta_n$ holds for any $p \in \Delta_n$.
\end{proposition}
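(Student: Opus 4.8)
For $V$ of Type~A, $Vp \in \Delta_n$ whenever $p \in \Delta_n$.

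The plan is to check the two defining conditions of $\Delta_n$ separately: non-negativity $Vp \ge \zeros$ and the normalization $\onestr_n (Vp) = 1$. Both reduce, via the characterizations in \Cref{prop:V-v0-v2}, to properties of the single block $V_{TT}$, so the work concentrates on the Type~A choice $V_{TT} = \diag(\onestr_T M)^{-1}$.

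For non-negativity, I would appeal to the first part of \Cref{prop:V-v0-v2}, which says that \ref{item:v1} (that is, $V \ge O$) holds precisely when $V_{TT} \ge O$. Since the Type~A $V_{TT}$ is diagonal, it is non-negative exactly when every entry of the row vector $\onestr_T M$ is positive, and this is where I expect the only real work to lie. The key step is to show $M = I_T + K_{TS} K_{SS}^{-2} K_{ST} \ge O$. Here $K_{ST}, K_{TS} \ge O$ by \ref{item:rcm1}, while $K_{SS}^{-1} \le O$ because $-K$ is an M-matrix (see \Cref{sec:rate-constant-matrices}); hence $K_{SS}^{-2} = (-K_{SS}^{-1})(-K_{SS}^{-1})$ is a product of two non-negative matrices and is therefore non-negative. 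Consequently $K_{TS} K_{SS}^{-2} K_{ST} \ge O$, and adding $I_T$ gives $M \ge O$ with diagonal entries at least $1$. It follows that each entry of $\onestr_T M$ is at least $1$, so $V_{TT} = \diag(\onestr_T M)^{-1}$ is well defined with positive diagonal; thus $V_{TT} \ge O$ and, by \Cref{prop:V-v0-v2}, $V \ge O$. In particular $Vp \ge \zeros$ for any $p \ge \zeros$.

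For the normalization, I would use the second part of \Cref{prop:V-v0-v2}, which reduces \ref{item:v2} (that is, $\onestr_n V = \onestr_n$) to the identity $\onestr_T M V_{TT} = \onestr_T$. Writing $d \defeq \onestr_T M$ so that $V_{TT} = \diag(d)^{-1}$, the row vector $d\,\diag(d)^{-1}$ has $j$th entry $d_j / d_j = 1$, so the identity holds and $V$ satisfies $\onestr_n V = \onestr_n$. Hence $\onestr_n (Vp) = \onestr_n p = 1$ whenever $p \in \Delta_n$. Combining this with $Vp \ge \zeros$ from the previous paragraph yields $Vp \in \Delta_n$, as desired. The only genuine obstacle is the sign analysis establishing $M \ge O$; everything else is a direct invocation of \Cref{prop:V-v0-v2} together with the diagonal structure of the Type~A $V_{TT}$.
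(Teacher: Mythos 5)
Your proof is correct and follows essentially the same route as the paper: the paper also derives the proposition by invoking the two parts of \Cref{prop:V-v0-v2}, noting that the Type~A $V_{TT}$ trivially satisfies $\onestr_T M V_{TT} = \onestr_T$ and that $V_{TT} \ge O$ follows from the non-negativity of $M = I_T + K_{TS}K_{SS}^{-2}K_{ST}$, which rests on the same sign analysis ($K_{ST}, K_{TS} \ge O$ by~\ref{item:rcm1} and $K_{SS}^{-1} \le O$ from the M-matrix property). Your additional observation that the diagonal entries of $M$ are at least $1$, so that $\onestr_T M$ has strictly positive entries and $\diag\prn[\big]{\onestr_T M}^{-1}$ is well defined, is a worthwhile detail the paper leaves implicit.
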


Unfortunately, $M^{-1}$ is not necessarily non-negative, and hence~\ref{item:v1} is not satisfied for Type~B.
However, we can project $Vp$ onto $\Delta_n$ without making the distance to the exact solution larger.
\Cref{prop:Vp-in-Delta} helps us to reduce the computational cost and the complexity of the implementation for Type~A, which does not require the projection.

\subsection{Spectral Properties of \texorpdfstring{$V$}{V}}\label{sec:positive-semi-definiteness-of-V}
As described in \Cref{sec:master-equations-with-detailed-balance}, the matrix exponential $\e^{tK}$ with a rate constant matrix $K$ and $t \ge 0$ has eigenvalues in $[0, 1]$.
In this section, we see that the matrix $V$ of either type enjoys this spectral property.

We introduce additional notions on coordinate subspaces.
Since $\Pi^{-1}$ is diagonal, $\Pi_I^{-1}$ with $\Pi_I \defeq \diag\prn{\pi_I}$ is naturally regarded as a metric on $\R^I$ for any $I \subseteq [n]$.
The \emph{adjoint matrix} of $A \in \R^{I \times J}$ is then defined by $A^* \defeq \Pi_J \trsp{A} \Pi_I^{-1} \in \R^{J \times I}$.
Self-adjointness and positive semi-definiteness are similarly defined for matrices in $\R^{I \times I}$.
If $A \in \R^{n \times n}$ is self-adjoint, $A_{II}$ is also self-adjoint and $A_{IJ}^* = A_{JI}$ holds for any $I,J \subseteq [n]$.
If $A \in \R^{n \times n}$ is positive semi-definite, so is $A_{II}$.

For self-adjoint matrices $A, B \in \R^{I \times I}$, let $A \preceq B$ mean that $B - A$ is positive semi-definite.
As with the usual case of symmetric matrices, $\preceq$ forms a partial order on the self-adjoint matrices, called the \emph{Löwner order}, compatible with the addition.
See \Cref{sec:positive-semi-definiteness,sec:coordinate-subspaces} for details on the Löwner order and metrics of coordinate subspaces, respectively.

We first show that the two self-adjoint matrices $M$ and $V_{TT}$ are in the following relation of the Löwner order.

\begin{lemma}\label{lem:order-M-V_TT}
  The matrix $V_{TT}$ of either type satisfies $I_T \preceq M \preceq V_{TT}^{-1}$.
\end{lemma}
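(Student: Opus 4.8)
The plan is to establish the two inequalities $I_T \preceq M$ and $M \preceq V_{TT}^{-1}$ separately. Before doing so I would record two structural facts about $M = I_T + K_{TS}K_{SS}^{-2}K_{ST}$ that drive both bounds. First, $M$ is self-adjoint: since $K$ is self-adjoint we have $K_{TS} = K_{ST}^*$ and $K_{SS}^{-1}$ is self-adjoint, so $K_{TS}K_{SS}^{-2}K_{ST} = K_{ST}^*(K_{SS}^{-1})^2 K_{ST}$ has the form $B^* C B$ with $C = (K_{SS}^{-1})^2$ self-adjoint, hence is self-adjoint; equivalently $M_{ij}\pi_j = M_{ji}\pi_i$. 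Second, $M$ is entrywise non-negative with $M_{jj} \ge 1$, because $K_{ST}, K_{TS} \ge O$ by \ref{item:rcm1} and $K_{SS}^{-1} \le O$ (as $-K$ is an M-matrix), so its square $K_{SS}^{-2} \ge O$. For Type~B the second inequality is the identity $M \preceq M$, since $V_{TT}^{-1} = M$, so only Type~A needs genuine work there.

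For $I_T \preceq M$, I would simply evaluate the quadratic form. For any $y \in \R^T$, using $K_{TS} = K_{ST}^*$ and the self-adjointness of $K_{SS}^{-1}$,
\[
  \pipr{y, (M - I_T) y} = \pipr{K_{ST} y, (K_{SS}^{-1})^2 K_{ST} y} = \pinorm{K_{SS}^{-1} K_{ST} y}^2 \ge 0,
\]
which gives $M - I_T \succeq O$. This part is routine: it is the familiar fact that $B^* C B \succeq O$ whenever $C \succeq O$.

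The substance is the Type~A inequality $M \preceq V_{TT}^{-1} = \diag(\onestr_T M)$; note the diagonal here is well defined and positive, since each column sum $(\onestr_T M)_j = \sum_{i} M_{ij} \ge M_{jj} \ge 1$. Writing $N \defeq \diag(\onestr_T M) - M$, I would set $w_{ij} \defeq M_{ij}/\pi_i$, which is symmetric ($w_{ij} = w_{ji}$) by the detailed-balance relation $M_{ij}\pi_j = M_{ji}\pi_i$ and non-negative by the entrywise non-negativity of $M$. Expanding $\pipr{y, N y} = \trsp{y}\Pi_T^{-1} N y$, substituting $M_{ij} = \pi_i w_{ij}$, and symmetrizing over $i \leftrightarrow j$, I expect the quadratic form to collapse to the graph-Laplacian sum of squares
\[
  \pipr{y, N y} = \frac{1}{2} \sum_{i, j} w_{ij} \left( \sqrt{\pi_j/\pi_i}\, y_i - \sqrt{\pi_i/\pi_j}\, y_j \right)^2 \ge 0,
\]
proving $M \preceq \diag(\onestr_T M)$. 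The main obstacle is exactly this bookkeeping step: the diagonal of $V_{TT}^{-1}$ consists of the \emph{column} sums of $M$ (encoded by $\onestr_T M$), not of $M$ after any symmetric normalization, so one must carefully track the $\pi$-weights and exploit the detailed-balance symmetry to rewrite a form that is not manifestly sign-definite as the non-negative combination of squares above.
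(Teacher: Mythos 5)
Your proof is correct. The first inequality and the Type~B case coincide with the paper's own argument: the paper writes $M = I_T + X^*X$ with $X \defeq K_{SS}^{-1}K_{ST}$, which is exactly your quadratic-form identity $\pipr{y, (M-I_T)y} = \pinorm{K_{SS}^{-1}K_{ST}y}^2 \ge 0$, and Type~B is the trivial equality $V_{TT}^{-1} = M$ in both. Where you take a genuinely different route is the Type~A inequality $M \preceq \diag\prn[\big]{\onestr_T M}$. The paper's proof is structural and is over in one sentence: $M - \diag\prn[\big]{\onestr_T M}$ satisfies~\ref{item:rcm1}--\ref{item:rcm3} with stationary distribution proportional to $\pi_T$ --- these are precisely the ingredients you isolate, namely the entrywise non-negativity of $M$, its detailed-balance symmetry $M_{ij}\pi_j = M_{ji}\pi_i$, and the fact that subtracting the diagonal matrix of column sums forces zero column sums --- hence it is a rate constant matrix, and therefore negative semi-definite by the general fact established in \Cref{sec:rate-constant-matrices} via the Gershgorin circle theorem. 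You instead prove positive semi-definiteness of $N \defeq \diag\prn[\big]{\onestr_T M} - M$ by hand: with $w_{ij} \defeq M_{ij}/\pi_i$ symmetric and non-negative, expanding $\pipr{y, Ny}$ and symmetrizing over $i \leftrightarrow j$ does collapse it to the weighted sum of squares you display (I checked the bookkeeping; the identity is correct), so $N \succeq O$. In effect you re-derive, specialized to this matrix, the Laplacian quadratic-form identity that the paper records separately as \Cref{lem:laplacian-quadratic} and uses elsewhere for a different bound. The trade-off: the paper's route is shorter because the spectral fact about rate constant matrices was proved once and is simply reused, and it makes visible that $M - V_{TT}^{-1}$ is itself a rate constant matrix (the same observation that keeps Schur complements well-behaved throughout the paper); your route is self-contained and elementary, requires no Gershgorin or spectral input, and exhibits an explicit sum-of-squares certificate, at the cost of exactly the $\pi$-weight bookkeeping you flagged as the main obstacle.
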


\begin{proof}
  Letting $X \defeq K_{SS}^{-1}K_{ST}$, we have $X^* = K_{TS} K_{SS}^{-1}$ and $M = I_T + X^* X$.
  Thus, $I_T \preceq M$ holds.
  We show $M \preceq V_{TT}^{-1}$.
  This is clear for Type~B since $M = V_{TT}^{-1}$.
  For Type~A, $M - V_{TT}^{-1} = M - \diag \prn[\big]{\onestr_T M}$ is a rate constant matrix with stationary distribution proportional to $\pi_T$, i.e., it satisfies~\ref{item:rcm1}--\ref{item:rcm3}.
  Therefore, $M - V_{TT}^{-1}$ is negative semi-definite.
\end{proof}

For a positive semi-definite self-adjoint matrix $A$, let $A^{\frac12}$ denote the \emph{square root} of $A$, i.e., the unique positive semi-definite matrix whose square is $A$.
Since $A \preceq B$ implies $P^* A P \preceq P^* B P$ for any $P \in \R^{n \times n}$, \Cref{lem:order-M-V_TT} can be expressed as follows.

\begin{lemma}\label{lem:order-M-V_TT-rephrase}
  The matrix $V_{TT}$ of either type satisfies $V_{TT} \preceq V_{TT}^{\frac12} M V_{TT}^{\frac12} \preceq I_T$.
\end{lemma}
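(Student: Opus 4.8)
The plan is to obtain the claim as an immediate congruence transform of the chain $I_T \preceq M \preceq V_{TT}^{-1}$ supplied by \Cref{lem:order-M-V_TT}. Concretely, I would take the self-adjoint factor $P \defeq V_{TT}^{\frac12}$ and apply the monotonicity $A \preceq B \Rightarrow P^* A P \preceq P^* B P$ of the Löwner order under congruence, which is the very property quoted immediately before the statement. The entire argument is then two applications of this implication, one to each of the two inequalities in \Cref{lem:order-M-V_TT}.

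First I would record that $V_{TT}^{\frac12}$ is well defined. Combining the two inequalities of \Cref{lem:order-M-V_TT} gives $I_T \preceq V_{TT}^{-1}$, so $V_{TT}^{-1}$ is positive definite; hence $V_{TT}$ is positive definite, its square root $V_{TT}^{\frac12}$ (in the sense defined just before the statement) exists, and $V_{TT}^{\frac12}$ is itself self-adjoint and invertible with $P^* = P = V_{TT}^{\frac12}$. Applying the congruence with this $P$ to $I_T \preceq M$ yields $V_{TT}^{\frac12} I_T V_{TT}^{\frac12} \preceq V_{TT}^{\frac12} M V_{TT}^{\frac12}$, whose left-hand side collapses to $V_{TT}$, giving $V_{TT} \preceq V_{TT}^{\frac12} M V_{TT}^{\frac12}$. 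Applying the same congruence to $M \preceq V_{TT}^{-1}$ yields $V_{TT}^{\frac12} M V_{TT}^{\frac12} \preceq V_{TT}^{\frac12} V_{TT}^{-1} V_{TT}^{\frac12}$, whose right-hand side collapses to $I_T$ (since $V_{TT}^{\frac12}$ commutes with $V_{TT}^{-1}$). Chaining the two produces $V_{TT} \preceq V_{TT}^{\frac12} M V_{TT}^{\frac12} \preceq I_T$, as required.

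I expect essentially no genuine obstacle here, since the statement is a cosmetic reformulation of \Cref{lem:order-M-V_TT}. The only point demanding a little care is the well-definedness and self-adjointness of $V_{TT}^{\frac12}$, i.e.\ confirming that $V_{TT}$ is positive definite so that the square root applies, together with the two collapses $P^* I_T P = V_{TT}$ and $P^* V_{TT}^{-1} P = I_T$, both of which are immediate once $P = V_{TT}^{\frac12}$ is known to be self-adjoint. For completeness one could instead verify positive definiteness of $V_{TT}$ type-by-type (for Type~B from $V_{TT} = M^{-1}$ with $I_T \preceq M$, and for Type~A from the diagonal matrix $V_{TT} = \diag\prn[\big]{\onestr_T M}^{-1}$ having positive entries), but this is already subsumed by the inequality chain of \Cref{lem:order-M-V_TT}.
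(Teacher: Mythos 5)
Your proof is correct and is exactly the paper's argument: the paper derives this lemma in one line by applying the congruence monotonicity $A \preceq B \Rightarrow P^*AP \preceq P^*BP$ with $P = V_{TT}^{\frac12}$ to the chain $I_T \preceq M \preceq V_{TT}^{-1}$ of \Cref{lem:order-M-V_TT}. Your additional care about the positive definiteness and self-adjointness of $V_{TT}^{\frac12}$ is a reasonable (if routine) supplement that the paper leaves implicit.
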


The next proposition gives the same bound on the eigenvalues of $V$ as $\e^{tK}$.

\begin{proposition}\label{prop:v-positive-semi-definite}
  The matrix $V$ of either type satisfies $O \preceq V \preceq I_n$, i.e., all eigenvalues of $V$ are in $[0, 1]$.
\end{proposition}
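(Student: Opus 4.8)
The plan is to exhibit $V$ as a symmetric factorization through $V_{TT}$ and then read off the two-sided spectral bound directly from the Löwner inequalities already established in \Cref{lem:order-M-V_TT,lem:order-M-V_TT-rephrase}. Writing $X \defeq K_{SS}^{-1} K_{ST}$, so that $X^* = K_{TS} K_{SS}^{-1}$, I would introduce the matrix $P \in \R^{n \times T}$ (rows indexed by $S$ on top and $T$ below)
\[
  P \defeq \begin{pmatrix} -X \\ I_T \end{pmatrix},
  \qquad
  P^* = \begin{pmatrix} -X^* & I_T \end{pmatrix}.
\]
A direct block multiplication then shows that the form~\eqref{def:V} is precisely $V = P V_{TT} P^*$, while
\[
  P^* P = I_T + X^* X = I_T + K_{TS} K_{SS}^{-2} K_{ST} = M
\]
by the definition~\eqref{def:M} of $M$. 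These two identities are the only nontrivial algebra needed; the rest is spectral bookkeeping.

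For the lower bound, \Cref{lem:order-M-V_TT} gives $I_T \preceq V_{TT}^{-1}$, so $V_{TT}^{-1}$, and hence $V_{TT}$, is positive definite; in particular $V_{TT} \succeq O$. The factorization then yields $\pipr{a, Va} = \pipr{P^* a, V_{TT} P^* a} \ge 0$ for every $a \in \R^n$, which is exactly $O \preceq V$.

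For the upper bound I would invoke the standard fact that, for any $A$, the self-adjoint matrices $AA^*$ and $A^*A$ share the same nonzero eigenvalues. Taking $A \defeq P V_{TT}^{\frac12}$, so that $A^* = V_{TT}^{\frac12} P^*$ by self-adjointness of $V_{TT}^{\frac12}$, we obtain $AA^* = P V_{TT} P^* = V$ and $A^*A = V_{TT}^{\frac12} P^* P V_{TT}^{\frac12} = V_{TT}^{\frac12} M V_{TT}^{\frac12}$. By \Cref{lem:order-M-V_TT-rephrase} the latter satisfies $V_{TT}^{\frac12} M V_{TT}^{\frac12} \preceq I_T$, so all of its eigenvalues lie in $[0,1]$; hence every nonzero eigenvalue of $V$ is at most $1$. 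Combined with $V \succeq O$, this establishes $O \preceq V \preceq I_n$, i.e.\ all eigenvalues of $V$ lie in $[0,1]$.

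The main obstacle I anticipate is not depth but careful bookkeeping in the $\pi$-inner-product geometry: one must verify that the block adjoints ($X^* = K_{TS} K_{SS}^{-1}$ and $(I_T)^* = I_T$) assemble correctly into $P^*$, confirm the identity $P^* P = M$ against~\eqref{def:M}, and apply the $AA^*$–$A^*A$ eigenvalue correspondence with respect to the $\pi$-adjoint rather than the ordinary transpose. Once the factorization $V = P V_{TT} P^*$ and the identity $P^* P = M$ are in hand, the two lemmas supply the entire spectral conclusion with no further computation.
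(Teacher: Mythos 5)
Your proof is correct and follows essentially the same route as the paper's: your $P$ is exactly the paper's $\Omega^*$ from~\eqref{def:omega}, and both arguments rest on the factorizations $V = \Omega^* V_{TT} \Omega$ and $M = \Omega\Omega^*$ together with \Cref{lem:order-M-V_TT,lem:order-M-V_TT-rephrase}. The only difference is in the finishing step: where the paper verifies the spectral correspondence by an explicit eigenvector computation ($V\Omega^*V_{TT}^{\frac12}u = \lambda \Omega^*V_{TT}^{\frac12}u$) followed by a rank count showing these exhaust the nonzero spectrum of $V$, you invoke the standard fact that $AA^*$ and $A^*A$ share the same nonzero eigenvalues with $A = P V_{TT}^{\frac12}$ --- which bounds the nonzero eigenvalues of $V$ directly and makes the rank argument unnecessary.
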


To prove \cref{prop:v-positive-semi-definite}, we rewrite the matrix $V$ by using the following matrix
\begin{gather}
  \Omega \coloneqq \begin{pmatrix} -K_{TS} K_{SS}^{-1} & I_T\end{pmatrix} \in \R^{T \times n}\label{def:omega}
\shortintertext{with the adjoint}
  \Omega^* = \begin{pmatrix}
    \prn{-K_{ST} K_{SS}^{-1}}^* \\
    \prn{I_T}^*
  \end{pmatrix}
  = \begin{pmatrix}
    -\prn{K_{SS}^{-1}}^* \prn{K_{ST}}^* \\
    I_T
  \end{pmatrix}
  = \begin{pmatrix}
    -K_{SS}^{-1} K_{TS} \\
    I_T
  \end{pmatrix}.
\end{gather}
It can be directly seen that $V$ and $M$ are expressed as $V = \Omega^* V_{TT} \Omega$ and $M = \Omega \Omega^*$, respectively.

\begin{proof}[{Proof of \Cref{prop:v-positive-semi-definite}}]
  The positive semi-definiteness of $V$ follows from $V = \Omega^* V_{TT} \Omega$ and $V_{TT} \succeq O$.
  To show $V_{TT} \preceq I_n$, let $(\lambda, u)$ be an eigenpair of $V_{TT}^{\frac12} M V_{TT}^{\frac12}$; $\lambda \ge 1$ holds from \Cref{lem:order-M-V_TT-rephrase}.
  By direct calculation, we obtain $V\Omega^*V_{TT}^{\frac12}u = \lambda\Omega^*V_{TT}^{\frac12}u$, meaning that $\lambda$ is an eigenvalue of $V$ as well.
  Since $\Omega$ is of row-full rank and $V_{TT}$ is nonsingular, $\rank V = |T|$ holds, i.e., $V$ has $|T|$ non-zero eigenvalues and they are exactly the eigenvalues of $V_{TT}^{\frac12} M V_{TT}^{\frac12}$.
  Thus, $V \preceq I_n$ is obtained.
\end{proof}

As for Type~B, the eigenvalues of $V$ are either $0$ or $1$ because $V_{TT}^{\frac12} M V_{TT}^{\frac12} = I_T$ holds.
This means that $V$ is idempotent, i.e., $V^2 = V$.
Accordingly, the matrix $V$ of Type~B is the orthogonal (with respect to the $\pi$-inner product) projection onto $\Im V = \Im \Omega^*$, which is the QSSA subspace with respect to $\set{S, T}$.

\section{Error Analysis}\label{sec:error-analysis}
In this section, we provide upper bounds on the error between the output of the RCMC method and the analytic solution to the master equation~\eqref{def:master}.

\subsection{Results}\label{sec:errror-analysis-results}
Let $K \in \R^{n \times n}$ be a rate constant matrix with stationarity distribution $\pi \in \Delta_n^\circ$ and $V \in \R^{n \times n}$ the matrix~\eqref{def:V} of Type~A or~B with bipartition $\set{S, T}$ of $[n]$.
For an arbitrary $t \in \Rp$, we consider the following form of error:
\begin{align}\label{def:err}
  \Err_{S, t}(p) \coloneqq \frac{\pinormbig{\proj(Vp) - \e^{tK}p}}{\pinorm{p}} \quad (p \in \Delta_n),
\end{align}
where $\proj(\cdot)$ is defined by~\eqref{def:proj} and can be omitted for Type~A due to \Cref{prop:Vp-in-Delta}.
We provide bounds on the following two quantities:
\begin{itemize}
  \item $\Err_{S, t}(p)$ with a given $p \in \Delta_n$,
  \item the expectation $\E_p\sqbr{\Err_{S, t}(p)}$, where $p$ is drawn from the uniform distribution on the vertices of $\Delta_n$ (i.e.\ the standard vectors $e_1, \dotsc, e_n$).
\end{itemize}
The distribution assumed in the expectation error is natural in that a typical simulation of a kinetic scheme starts from a single state.

Our first step is to bound $\pinormbig{\proj(Vp) - \e^{tK}p}$ by $\pinormbig{\prn[\big]{V - \e^{tK}}p}$ as follows.

\begin{proposition}\label{prop:pythagorean}
  For $a \in \Delta_n$ and $q \in \R^n$, we have $\pinorm{\proj(q) - a} \le \pinorm{q - a}$.
\end{proposition}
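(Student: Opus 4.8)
The statement $\pinorm{\proj(q) - a} \le \pinorm{q - a}$ for $a \in \Delta_n$ and $q \in \R^n$ is a standard property of metric projections onto convex sets: projecting onto a convex set brings you no farther from any point already in the set. The plan is to exploit that $\Delta_n$ is closed and convex in the $\pi$-inner product space, so that $\proj(q)$ is the unique nearest point and satisfies a variational (obtuse-angle) inequality. Specifically, I would first invoke the first-order optimality condition for the projection defined by~\eqref{def:proj}: for every $a \in \Delta_n$,
\begin{align}\label{eq:proj-variational}
  \pipr{q - \proj(q),\, a - \proj(q)} \le 0.
\end{align}
This holds because $\proj(q)$ minimizes $\pinorm{q' - q}^2$ over the convex set $\Delta_n$, and moving infinitesimally from $\proj(q)$ toward any other feasible point $a$ cannot decrease the squared distance.

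With~\eqref{eq:proj-variational} in hand, the rest is a short computation. Writing $q - a = (q - \proj(q)) + (\proj(q) - a)$ and expanding the squared $\pi$-norm gives
\[
  \pinorm{q - a}^2
  = \pinorm{q - \proj(q)}^2 + 2\,\pipr{q - \proj(q),\, \proj(q) - a} + \pinorm{\proj(q) - a}^2.
\]
The middle term equals $-2\,\pipr{q - \proj(q),\, a - \proj(q)}$, which is non-negative by~\eqref{eq:proj-variational}. Hence $\pinorm{q - a}^2 \ge \pinorm{q - \proj(q)}^2 + \pinorm{\proj(q) - a}^2 \ge \pinorm{\proj(q) - a}^2$, and taking square roots yields the claim. (This is essentially a Pythagorean-type inequality, matching the proposition's label.)

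The only point requiring care is justifying the variational inequality~\eqref{eq:proj-variational} in the $\pi$-inner product space rather than the standard Euclidean one. Since $\Pi^{-1}$ is positive definite, $\pipr{\cdot,\cdot}$ is a genuine inner product, $\Delta_n$ remains closed and convex, and the nearest-point projection onto a closed convex set in any Hilbert (here finite-dimensional) space is well-defined and characterized by the obtuse-angle condition; so the standard argument transfers verbatim. I would state this transfer explicitly, perhaps noting that one may equivalently change variables $y \mapsto \Pi^{-1/2} y$ to reduce to the Euclidean projection, under which $\Delta_n$ maps to another closed convex set. I do not expect any genuine obstacle here—the main thing is to be clear that convexity of $\Delta_n$ and completeness of the space are what make $\proj(q)$ well-defined and the inequality~\eqref{eq:proj-variational} valid.
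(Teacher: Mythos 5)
Your proof is correct, and it takes a genuinely different (though closely related) route from the paper's. You argue directly in the $\pi$-inner product space: you invoke the first-order optimality (obtuse-angle) condition $\pipr{q - \proj(q),\, a - \proj(q)} \le 0$ for the projection onto the closed convex set $\Delta_n$, then expand $\pinorm{q - a}^2$ to get the full Pythagorean inequality $\pinorm{q - a}^2 \ge \pinorm{q - \proj(q)}^2 + \pinorm{\proj(q) - a}^2$, which is in fact slightly stronger than the stated claim. The paper instead changes variables $y \mapsto \Pi^{-\frac12} y$, observes that $\Pi^{-\frac12}\proj(q)$ is the Euclidean projection of $\Pi^{-\frac12}q$ onto the convex set $\Pi^{-\frac12}\Delta_n$, and then cites the standard Euclidean Pythagorean theorem as a black box. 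Your route is more self-contained, since it proves the underlying variational inequality rather than citing its Euclidean consequence; the paper's is shorter because it leverages the known Euclidean result and only needs to verify that the change of variables preserves convexity and maps the $\pi$-projection to the Euclidean one. Notably, the alternative you sketch at the end (reduction via $\Pi^{-1/2}$ to a Euclidean projection onto a transformed convex set) is precisely the paper's proof, so your write-up effectively subsumes both arguments.
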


\begin{proof}
  This inequality is a $\pi$-norm variant of the Pythagorean theorem, which follows from the standard Euclidean version as follows.
  Define $\Pi^{-\frac12}\Delta_n \defeq \set[\big]{\Pi^{-\frac12}y }[y \in \Delta_n]$, which one can easily prove to be convex.
  Note that we have $\proj(q) = \argmin\set[\big]{\norm[\big]{\Pi^{-\frac12}(y - q)}_\pi}[y \in \Delta_n] = \Pi^{\frac12} \argmin \set[\big]{\norm[\big]{y^\prime - \Pi^{-\frac12}q}_\pi}[y^\prime \in \Pi^{-\frac12}\Delta_n]$, i.e., $\Pi^{-\frac12} \proj(q)$ is the Euclidean projection of $\Pi^{-\frac12}q$ onto $\Pi^{-\frac12}\Delta_n$.
  Since $\Pi^{-\frac12}a \in \Pi^{-\frac12}\Delta_n$, the Pythagorean theorem implies
  $\norm[\big]{\Pi^{-\frac12}\proj(q) - \Pi^{-\frac12}a}_\pi \le \norm[\big]{\Pi^{-\frac12}q - \Pi^{-\frac12}a}_\pi$, which is nothing but the desired inequality.
\end{proof}
\Cref{prop:pythagorean} ensures that the projection $\proj(Vp)$ does not enlarge the distance between the approximate and exact solutions.
\Cref{prop:pythagorean} together with the spectral properties of $V$ and $\e^{tK}$ leads us to the following uniform error bound.
\begin{lemma}\label{lem:error-uniform-bound}
  For any $p \in \Delta_n$, it holds that $\Err_{S, t}(p) \le 1$.
\end{lemma}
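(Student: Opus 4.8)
The plan is to bound the two norms $\pinormbig{\proj(Vp) - \e^{tK}p}$ and $\pinorm{p}$ separately and show the ratio never exceeds $1$. By \Cref{prop:pythagorean} (applied with $a = \e^{tK}p$ and $q = Vp$, noting $\e^{tK}p = x(t) \in \Delta_n$ by \Cref{prop:solution} since $p \in \Delta_n$), the numerator is controlled by $\pinormbig{\prn[\big]{V - \e^{tK}}p}$. So it suffices to prove $\pinormbig{\prn[\big]{V - \e^{tK}}p} \le \pinorm{p}$, i.e.\ that the operator norm of $V - \e^{tK}$ with respect to the $\pi$-inner product is at most $1$.

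First I would observe that both $V$ and $\e^{tK}$ are self-adjoint with respect to the $\pi$-inner product: $V$ by \ref{item:v3} (established in \Cref{prop:V-v1-v3} for the form~\eqref{def:V} with self-adjoint $V_{TT}$, which both types satisfy), and $\e^{tK}$ by~\ref{item:e3}. Hence $V - \e^{tK}$ is self-adjoint, and its $\pi$-operator norm equals its spectral radius. The key inputs are the spectral containments: by \Cref{prop:v-positive-semi-definite} we have $O \preceq V \preceq I_n$, so every eigenvalue of $V$ lies in $[0,1]$; and, as noted at the end of \Cref{sec:master-equations-with-detailed-balance}, every eigenvalue of $\e^{tK}$ lies in $[0,1]$ for $t \ge 0$ because $K$ is negative semi-definite.

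The core step is to convert these two separate spectral bounds into a bound on the eigenvalues of the \emph{difference}. Both matrices satisfy $O \preceq V \preceq I_n$ and $O \preceq \e^{tK} \preceq I_n$ in the Löwner order. The plan is to use the elementary fact that if $A$ and $B$ are self-adjoint with $O \preceq A \preceq I_n$ and $O \preceq B \preceq I_n$, then $-I_n \preceq A - B \preceq I_n$: indeed $A - B \preceq A \preceq I_n$ since $B \succeq O$, and symmetrically $B - A \preceq I_n$ gives $A - B \succeq -I_n$. Thus every eigenvalue of $A - B$ lies in $[-1, 1]$, so its spectral radius, and hence its $\pi$-operator norm, is at most $1$.

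Applying this with $A = V$ and $B = \e^{tK}$ yields $\pinormbig{\prn[\big]{V - \e^{tK}}p} \le \pinorm{p}$ for all $p$, which combined with \Cref{prop:pythagorean} gives $\pinormbig{\proj(Vp) - \e^{tK}p} \le \pinorm{p}$ and therefore $\Err_{S,t}(p) \le 1$. The main obstacle I anticipate is purely a matter of bookkeeping with the $\pi$-inner product rather than any genuine difficulty: one must verify that self-adjointness of $V - \e^{tK}$ legitimately identifies its operator norm with its spectral radius in this non-standard inner product, and that the Löwner-order manipulations are valid here. Both follow from the spectral theorem for self-adjoint operators on the $\pi$-inner product space (equivalently, after the congruence $\Pi^{-\frac12}(\cdot)\Pi^{\frac12}$ which symmetrizes these matrices in the standard Euclidean sense), as the paper sets up in \Cref{sec:positive-semi-definiteness}. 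No computation with the explicit block form~\eqref{def:V} is needed; the proof rests entirely on the qualitative spectral containment $[0,1]$ shared by $V$ and $\e^{tK}$.
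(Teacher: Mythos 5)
Your proposal is correct and takes essentially the same route as the paper's proof: reduce via \Cref{prop:pythagorean} to bounding the $\pi$-operator norm of $V - \e^{tK}$, identify that norm with the spectral radius by self-adjointness, and conclude from $O \preceq V \preceq I_n$ and $O \preceq \e^{tK} \preceq I_n$ that $-I_n \preceq V - \e^{tK} \preceq I_n$. If anything, you are slightly more careful than the paper in explicitly checking that $\e^{tK}p \in \Delta_n$ (via \Cref{prop:solution}) before invoking \Cref{prop:pythagorean}.
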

We show \cref{lem:error-uniform-bound} in \Cref{sec:proof-lem-error-expand} with the aid of the notion of the \emph{matrix norm}.

We then focus on an error bound specific to a given $p \in \Delta_n$.
We first decompose $p$ in terms of the eigenvectors of $K$.

\begin{lemma}\label{lem:error-expand}
  Let $\set{u_1, \dotsc, u_n}$ be an orthonormal eigenbasis of $K$.
  The matrix $V$ of either type satisfies
  \begin{align}
    \frac{\pinormbig{\prn[\big]{V - \e^{tK}}p}}{\pinorm{p}}
    &\le
    \sum_{k=1}^n \frac{|\pipr{u_k, p}|}{\pinorm{p}} \pinormbig{\prn[\big]{V - \e^{tK}}u_k}
    \quad (p \in \Delta_n), \label{eq:error-expand}\\
    \E_p\sqbr{\frac{\pinormbig{\prn[\big]{V - \e^{tK}}p}}{\pinorm{p}}}
    &\le \sqrt{\frac{1}{n} \sum_{k=1}^n \pinormbig{\prn[\big]{V - \e^{tK}}u_k}^2}.\label{eq:exp-error-expand}
  \end{align}
\end{lemma}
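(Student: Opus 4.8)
The plan is to treat the two inequalities separately, deriving both from the expansion of the operator $R \coloneqq V - \e^{tK}$ on the orthonormal (with respect to the $\pi$-inner product) eigenbasis $\set{u_1, \dotsc, u_n}$ of $K$ furnished by the lemma. For the pointwise bound~\eqref{eq:error-expand} I would first write $p = \sum_{k=1}^n \pipr{u_k, p} u_k$, which is legitimate because $\set{u_k}$ is a $\pi$-orthonormal basis. Applying the linear map $R$ and the triangle inequality for $\pinorm{\cdot}$ gives $\pinormbig{Rp} \le \sum_{k=1}^n \abs{\pipr{u_k, p}} \pinormbig{Ru_k}$, and dividing through by $\pinorm{p}$ yields~\eqref{eq:error-expand} immediately. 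This step is routine.

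The expectation bound~\eqref{eq:exp-error-expand} is the substantive part. Since $p$ ranges uniformly over the standard vectors $e_1, \dotsc, e_n$, the left-hand side equals $\frac{1}{n}\sum_{i=1}^n \pinormbig{Re_i}/\pinorm{e_i}$. Bounding this arithmetic mean by the corresponding quadratic mean (equivalently, Cauchy--Schwarz against the all-one vector) gives $\frac{1}{n}\sum_{i=1}^n \pinormbig{Re_i}/\pinorm{e_i} \le \sqrt{\frac{1}{n}\sum_{i=1}^n \pinormbig{Re_i}^2/\pinorm{e_i}^2}$, so the whole matter reduces to identifying the sum under the square root with $\sum_{k=1}^n \pinormbig{Ru_k}^2$.

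The crux, and the step I expect to require the most care to state cleanly, is that both sums equal the single basis-independent quantity $\operatorname{tr}(R^*R)$. For the standard-vector sum I would use $\pinorm{e_i}^2 = \trsp{e_i}\Pi^{-1}e_i = 1/\pi_i$ together with $\pinormbig{Re_i}^2 = \pipr{e_i, R^*Re_i} = (\Pi^{-1}R^*R)_{ii}$, so that the weight $1/\pinorm{e_i}^2 = \pi_i$ is exactly what cancels the $\Pi^{-1}$ and leaves $\pinormbig{Re_i}^2/\pinorm{e_i}^2 = (R^*R)_{ii}$; summing over $i$ gives $\operatorname{tr}(R^*R)$. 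For the eigenbasis sum, $\sum_{k} \pinormbig{Ru_k}^2 = \sum_k \pipr{u_k, R^*Ru_k}$ is precisely the trace of the operator $R^*R$ read off in the $\pi$-orthonormal basis $\set{u_k}$, which coincides with the same basis-independent trace. Equating the two expressions for $\operatorname{tr}(R^*R)$ converts the bound over standard vectors into the bound over eigenvectors and completes~\eqref{eq:exp-error-expand}. The only genuine subtlety is this weighting argument: one must observe that summing over the (non-$\pi$-orthonormal) standard basis with weights $\pi_i$ produces exactly the same trace as summing over the $\pi$-orthonormal eigenbasis.
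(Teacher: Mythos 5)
Your proof is correct. For~\eqref{eq:error-expand} your argument coincides exactly with the paper's: expand $p$ in the $\pi$-orthonormal eigenbasis and apply the triangle inequality.

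For~\eqref{eq:exp-error-expand} you take a genuinely different route to the key identity, and it is worth comparing the two. Both proofs begin with the same reduction to second moments (your QM--AM step over the vertices is the same inequality the paper obtains from Jensen). From there the paper expands $\pinorm{Ep}^2$ in the eigenbasis as the double sum $\sum_{k,l}\pipr{u_k,p}\pipr{u_l,p}\pipr{Eu_k,Eu_l}$ (with $E = V - \e^{tK}$, your $R$) and computes the expectation of each coefficient product over the vertices, finding $\E_p\sqbr{\pipr{u_k,p}\pipr{u_l,p}/\pinorm{p}^2} = \delta_{kl}/n$, so that only the diagonal terms survive. You avoid the double sum entirely: you evaluate the vertex average directly, observing that $\pinorm{Re_i}^2/\pinorm{e_i}^2 = \pi_i\,(\Pi^{-1}R^*R)_{ii} = (R^*R)_{ii}$, so the sum equals $\tr(R^*R)$, and then invoke basis-independence of the trace to identify it with $\sum_k \pinorm{Ru_k}^2$. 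That last step, which you rightly flag as the subtle one, is valid: if $U$ is the matrix with columns $u_k$, then $\pi$-orthonormality $\trsp{U}\Pi^{-1}U = I_n$ gives $U^{-1} = \trsp{U}\Pi^{-1}$, hence $\sum_k \pipr{u_k, R^*Ru_k} = \tr\prn[\big]{\trsp{U}\Pi^{-1}R^*RU} = \tr\prn[\big]{U^{-1}R^*RU} = \tr(R^*R)$. Both arguments ultimately rest on the same interplay between the $\pi$-weighted standard basis and the $\pi$-orthonormal eigenbasis, but your trace formulation absorbs the vanishing of the cross terms automatically and is arguably the cleaner packaging (it is the statement that the Hilbert--Schmidt norm of $R$ is basis-independent); the paper's term-by-term expectation computation is more elementary in that it never needs the notion of an operator trace, at the cost of carrying the off-diagonal terms until they are shown to cancel.
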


For each eigenvector $u$ of $K$, the value $\pinormbig{\prn[\big]{V - \e^{tK}}u}$ is bounded as follows.

\begin{lemma}\label{lem:bound-on-V-minus-exp-u}
  For an eigenpair $(\lambda, u)$ of $K$ with $\pinorm{u} = 1$, we have
  \begin{align}
    \pinormbig{\prn[\big]{V - \e^{tK}}u} \le \min\set[\big]{
      1,
      \pinorm{Vu} + \e^{t\lambda},
      \pinorm{(I_n - V)u} + 1 - \e^{t\lambda}
    }.
  \end{align}
\end{lemma}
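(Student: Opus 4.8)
The plan is to bound the single quantity $\pinormbig{\prn[\big]{V - \e^{tK}}u}$ from above by each of the three arguments of the minimum separately; taking the minimum then yields the claim. The key preliminary observation is that $u$ is an eigenvector of $K$, so $\e^{tK}u = \e^{t\lambda}u$ and hence $\pinorm{\e^{tK}u} = \e^{t\lambda}$ because $\pinorm{u} = 1$. Since $K$ is negative semi-definite we have $\lambda \le 0$, and with $t \ge 0$ this gives $\e^{t\lambda} \in (0, 1]$; in particular $1 - \e^{t\lambda} \ge 0$, which I will use to discard absolute values.

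The second and third bounds are immediate triangle-inequality estimates. For the second, $\pinormbig{\prn[\big]{V - \e^{tK}}u} \le \pinorm{Vu} + \pinorm{\e^{tK}u} = \pinorm{Vu} + \e^{t\lambda}$. For the third, I would rewrite $V - \e^{tK} = -(I_n - V) + (I_n - \e^{tK})$ and apply the triangle inequality to obtain $\pinormbig{\prn[\big]{V - \e^{tK}}u} \le \pinorm{(I_n - V)u} + \pinormbig{(I_n - \e^{tK})u}$, where the last term equals $(1 - \e^{t\lambda})\pinorm{u} = 1 - \e^{t\lambda}$ by the sign remark above.

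The first bound, $\pinormbig{\prn[\big]{V - \e^{tK}}u} \le 1$, is the only one that genuinely uses the spectral structure and is the crux of the argument. By \Cref{prop:v-positive-semi-definite} we have $O \preceq V \preceq I_n$, and since $K$ is negative semi-definite its matrix exponential satisfies $O \preceq \e^{tK} \preceq I_n$ for $t \ge 0$. Subtracting these two-sided Löwner sandwiches gives $-I_n \preceq V - \e^{tK} \preceq I_n$. As $V - \e^{tK}$ is self-adjoint (both $V$ and $\e^{tK}$ are, by~\ref{item:v3} and~\ref{item:e3}), all its eigenvalues therefore lie in $[-1, 1]$, so its operator norm $\norm{V - \e^{tK}}$ with respect to the $\pi$-norm is at most $1$; applying this to the unit vector $u$ gives the bound. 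This is precisely the matrix-norm estimate that underlies \Cref{lem:error-uniform-bound}.

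The three estimates combine to yield the minimum, completing the proof. The main obstacle is the first bound: one must invoke that the Löwner order is compatible with subtraction and that a two-sided Löwner sandwich for a self-adjoint matrix controls its $\pi$-operator norm. Both are routine once \Cref{prop:v-positive-semi-definite} and the spectral bound on $\e^{tK}$ are in hand, so I expect no serious difficulty beyond keeping the self-adjointness hypotheses explicit.
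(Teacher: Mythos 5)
Your proof is correct and follows essentially the same route as the paper's: the second and third bounds via the triangle inequality combined with $\e^{tK}u = \e^{t\lambda}u$, and the uniform bound $1$ via the Löwner sandwiches $O \preceq V \preceq I_n$ and $O \preceq \e^{tK} \preceq I_n$ together with self-adjointness of $V - \e^{tK}$, which controls its $\pi$-operator norm by its spectral radius. Your explicit remark that $\lambda \le 0$ and $t \ge 0$ make $1 - \e^{t\lambda}$ non-negative is a detail the paper leaves implicit, but otherwise the arguments coincide.
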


Proofs of \Cref{lem:error-expand,lem:bound-on-V-minus-exp-u} are also given in \Cref{sec:proof-lem-error-expand}.
The terms $\pinorm{Vu}$ and $\pinorm{(I_n - V)u}$ in~\eqref{eq:error-expand} and~\eqref{eq:exp-error-expand} are bounded by the following theorem.
Let $\norm{A}_{\infty, \mathrm{off}}$ denote the maximum absolute value of an off-diagonal entry of $A$.

\begin{lemma}\label{lem:bound-on-Vu}
  Let $D$ be the Schur complement of $K_{SS}$ in $K$.
  For an eigenpair $(\lambda, u)$ of $K$ with $\pinorm{u} = 1$, the following two statements hold.

  \begin{enumerate}
    \item Af for Type~A, we have
          \begin{align}
            \pinorm{Vu}       \le \frac{\rho(D)}{|\lambda|}, \quad
            \pinorm{(I_n - V)u} \le \frac{|\lambda| + \sqrt{2 \norm{\Pi^{-1} K}_{\infty, \mathrm{off}} |\lambda|}}{\sigma(K_{SS})}.
          \end{align}
    \item As for Type~B, we have
          \begin{align}
            \pinorm{Vu}       \le \frac{\rho(D)}{|\lambda|}, \quad
            \pinorm{(I_n - V)u} \le \frac{|\lambda|}{\sigma(K_{SS})}.
          \end{align}
  \end{enumerate}
\end{lemma}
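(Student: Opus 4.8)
The plan is to base every estimate on the single block identity $\Omega K = \begin{pmatrix} O & D\end{pmatrix}$, which follows in one line from the definition~\eqref{def:omega} of $\Omega$ and of the Schur complement $D$. Applied to an eigenpair $(\lambda, u)$ of $K$ it gives $\lambda\,\Omega u = \Omega K u = D u_T$, hence $\Omega u = \lambda^{-1} D u_T$ for $\lambda \ne 0$; the case $\lambda = 0$ is trivial since then $u \propto \pi$ and $V\pi = \pi = \e^{tK}\pi$. Throughout I use $\pinorm{u_S}, \pinorm{u_T} \le \pinorm{u} = 1$, that $D$ is self-adjoint so $\pinorm{D u_T} \le \rho(D)\pinorm{u_T}$, and that $K_{SS}$ is self-adjoint and negative definite so $\pinorm{K_{SS}^{-1} a} \le \pinorm{a}/\sigma(K_{SS})$. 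For the bound on $\pinorm{Vu}$, which is the same for both types, I would write $Vu = \Omega^* V_{TT}\Omega u = \lambda^{-1}\Omega^* V_{TT} D u_T$ and use $\pinorm{\Omega^* w}^2 = \pipr{M w, w}$ (from $M = \Omega\Omega^*$) to obtain $\pinorm{Vu}^2 = \lambda^{-2}\pipr{V_{TT} M V_{TT} D u_T, D u_T}$. Since \Cref{lem:order-M-V_TT} gives $M \preceq V_{TT}^{-1}$, and $V_{TT}\preceq I_T$ holds for both types (for Type~B because $M\succeq I_T$, for Type~A because the diagonal entries of $\diag(\onestr_T M)$ are at least $1$), we get $V_{TT} M V_{TT}\preceq V_{TT}\preceq I_T$, and with $\pinorm{D u_T}\le\rho(D)$ this yields $\pinorm{Vu}\le\rho(D)/\abs{\lambda}$.

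For $\pinorm{(I_n-V)u}$ of Type~B I would use that $V$ is the $\pi$-orthogonal projection onto $W$, so $\pinorm{(I_n-V)u}$ equals the distance from $u$ to $W$, and then bound this distance by a convenient feasible point. Taking $y\in W$ with $y_T = u_T$ and $y_S = -K_{SS}^{-1}K_{ST}u_T$, the eigen-equation in the $S$-block gives $u - y = \bigl(\lambda K_{SS}^{-1}u_S;\,\zeros\bigr)$, so that $\pinorm{(I_n-V)u}\le\pinorm{u-y} = \abs{\lambda}\,\pinorm{K_{SS}^{-1}u_S}\le \abs{\lambda}/\sigma(K_{SS})$, which is exactly the claimed Type~B bound.

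For Type~A I would compare against Type~B. Writing $V^{\mathrm A}$ and $V^{\mathrm B}$ for the two matrices, the decomposition $(I_n - V^{\mathrm A})u = (I_n - V^{\mathrm B})u + (V^{\mathrm B}-V^{\mathrm A})u$ is $\pi$-orthogonal, because $(I_n-V^{\mathrm B})u\in W^\perp = \ker\Omega$ while $(V^{\mathrm B}-V^{\mathrm A})u = \Omega^*(M^{-1}-V_{TT}^{\mathrm A})\Omega u\in\Im\Omega^* = W$. Hence $\pinorm{(I_n-V^{\mathrm A})u}\le \abs{\lambda}/\sigma(K_{SS}) + \pinorm{(V^{\mathrm B}-V^{\mathrm A})u}$, and it remains to bound the correction by $\sqrt{2\norm{\Pi^{-1}K}_{\infty,\mathrm{off}}\abs{\lambda}}/\sigma(K_{SS})$. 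Setting $N\coloneqq\diag(\onestr_T M) - M$, which is positive semi-definite with $N\pi_T = \zeros$ (it is the negation of the rate constant matrix appearing in the proof of \Cref{lem:order-M-V_TT}), the resolvent identity gives $M^{-1}-V_{TT}^{\mathrm A} = M^{-1}N V_{TT}^{\mathrm A}$, so that $\pinorm{(V^{\mathrm B}-V^{\mathrm A})u}^2 = \pinorm{M^{-\frac12}N V_{TT}^{\mathrm A}\Omega u}^2 \le \pinorm{N V_{TT}^{\mathrm A}\Omega u}^2$ using $M^{-1}\preceq I_T$.

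I expect the main obstacle to be this final estimate of the correction term. The factor $\norm{\Pi^{-1}K}_{\infty,\mathrm{off}}$ should enter as a uniform bound on the edge weights of the graph Laplacian $N$, while the crucial $\sqrt{\abs{\lambda}}$ must come from controlling a Dirichlet-type energy such as $\pipr{N u_T, u_T}$ by $\abs{\lambda}$: since $N\pi_T=\zeros$ and $u\to\pi$ as $\lambda\to 0$, the vector $u_T$ approaches $\ker N$, and the eigen-equation has to be invoked to quantify this at rate $O(\sqrt{\abs{\lambda}})$. Making this quantitative---expressing the discrepancy $\pipr{N u_T, u_T}$ through $\lambda$ and the off-diagonal magnitude $\norm{\Pi^{-1}K}_{\infty,\mathrm{off}}$, and assembling it with the identity $\Omega u = \lambda^{-1}Du_T$ so that the spurious powers of $\lambda$ and factors of $\rho(D)$ cancel---is the delicate part of the argument, whereas the $\pinorm{Vu}$ bound and the Type~B projection estimate are comparatively routine.
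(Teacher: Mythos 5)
Your bounds on $\pinorm{Vu}$ (both types) and on $\pinorm{(I_n-V)u}$ for Type~B are correct. The $\pinorm{Vu}$ argument is essentially the paper's: both rest on the identity $\Omega u = \frac{1}{\lambda}Du_T$ and on $V_{TT}MV_{TT}\preceq V_{TT}\preceq I_T$, which follows from \Cref{lem:order-M-V_TT}. For Type~B your route is genuinely different and in fact tidier: the paper expands $I_n - V$ via the Woodbury identity as $\Psi^*N^{-1}\Psi$ with $\Psi = \begin{pmatrix} I_S & K_{SS}^{-1}K_{ST}\end{pmatrix}$ and $N = I_S + K_{SS}^{-1}K_{ST}K_{TS}K_{SS}^{-1}$, then bounds $\pipr{\Psi u, N^{-1}\Psi u}$ using $N^{-1}\preceq I_S$ (\Cref{lem:bound-u-v-delta}), whereas you invoke the projection characterization of $V$ and exhibit the feasible point $y \in W$ with $y_T = u_T$, $y_S = -K_{SS}^{-1}K_{ST}u_T$, whose residual $u - y$ has only the $S$-block $\lambda K_{SS}^{-1}u_S$. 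Both derivations end at the same quantity $\abs{\lambda}\,\pinorm{K_{SS}^{-1}u_S}$, but yours avoids the Woodbury algebra entirely.

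The genuine gap is the Type~A bound on $\pinorm{(I_n-V)u}$, which is the hard half of the lemma and exactly where your proposal stops. Your orthogonal decomposition through $V^{\mathrm B}$ and the reduction to bounding $\pinorm[\big]{M^{-\frac12}NV_{TT}^{\mathrm A}\Omega u}$ (with your $N = \diag(\onestr_T M) - M$) are sound, but the estimate you then need—that this term is at most $\sqrt{2\norm{\Pi^{-1}K}_{\infty,\mathrm{off}}\abs{\lambda}}\,/\,\sigma(K_{SS})$—is precisely what you concede you cannot prove, and it is not a routine step; it is the crux of the whole lemma. The paper closes it with a mechanism absent from your sketch: a commutator. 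Working directly with $u - V^{\mathrm A}u$, using the diagonal structure $V_{TT} = \diag\prn[\big]{\onestr_T M}^{-1}$ and detailed balance in the form $\diag\prn[\big]{\onestr_S K_{SS}^{-1}K_{ST}} = \diag\prn[\big]{K_{TS}K_{SS}^{-1}\pi_S}\Pi_T^{-1}$, the paper rewrites the troublesome term as $V_{TT}^{\frac12}\Omega\,\Delta\,\trsp{\begin{pmatrix}\trsp{\prn{K_{SS}^{-1}\pi_S}} & \zerostr\end{pmatrix}}$ where $\Delta \defeq K\diag\prn{\frac{u}{\pi}} - \diag\prn{\frac{u}{\pi}}K$, with entries $\Delta_{ij} = K_{ij}\prn[\big]{\frac{u_j}{\pi_j} - \frac{u_i}{\pi_i}}$. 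The crucial factor $\sqrt{\abs{\lambda}}$ then comes from two small lemmas you do not have: $\pinorm{\Delta}^2 \le \tr \Delta^*\Delta$ (\Cref{lem:spectral-frobenius}) and the graph-Laplacian quadratic-form identity for $\pipr{u,Ku}$ (\Cref{lem:laplacian-quadratic}), which together yield $\pinorm{\Delta}\le\sqrt{2\abs{\lambda}\,\norm{\Pi^{-1}K}_{\infty,\mathrm{off}}}$. Note also that in the paper's route the factor $1/\sigma(K_{SS})$ multiplying $\pinorm{\Delta}$ comes from $\pinorm[\big]{K_{SS}^{-1}\pi_S}$, and the identity $\Omega u = \frac1\lambda Du_T$ is never used in this part—so the "cancellation of spurious powers of $\lambda$ and factors of $\rho(D)$" that you flag as the delicate point is an artifact of your particular reduction, not something the proof actually has to confront. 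Until you supply an estimate of this commutator type (or otherwise prove your conjectured bound on the correction term), item~1 of the lemma remains unproven.
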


\Cref{lem:bound-on-Vu} is proved in \Cref{sec:bounding-Vu,sec:bounding-I-V-u}.
It should be noted that the upper bounds on $\pinorm{Vu}$ and $\pinorm{(I_n-V)u}$ of either type given in \Cref{lem:bound-on-Vu} converge to $0$ as $\lambda \to -\infty$ and $\lambda \to 0$, respectively.
This fact agrees with the following observation: if $V$ approximates $\e^{tK}$ well, then
\begin{align}
  \pinorm{Vu} &\approx \pinormbig{\e^{tK} u} = \pinormbig{\e^{t\lambda} u} = \e^{t\lambda} \to 0 && (\lambda \to -\infty), \\
  \pinorm{(I_n-V)u} &\approx \pinormbig{(I_n - \e^{tK}) u} = \pinormbig{(1 - \e^{t\lambda}) u} = 1 - \e^{t\lambda} \to 0 && (\lambda \to 0).
\end{align}
Combining \Cref{prop:pythagorean,lem:error-expand,lem:bound-on-V-minus-exp-u,lem:bound-on-Vu,lem:error-uniform-bound}, we obtain the following.

\begin{theorem}\label{thm:error-main}
  Let $K \in \R^{n \times n}$ be a rate constant matrix with stationary distribution $\pi \in \Delta_n^\circ$, $\set{u_1, \dotsc, u_n}$ an orthonormal eigenbasis of $K$ with corresponding eigenvalues $\set{\lambda_1, \dotsc, \lambda_n}$, $\set{S, T}$ a bipartition of $[n]$, $D$ the Schur complement of $K_{SS}$ in $K$, and $V \in \R^{n \times n}$ the matrix~\eqref{def:V} of Type~A or~B.
  For $t \in \Rp$, it holds that
  \begin{align}
    \Err_{S, t}(p) &\le \min\set{\sum_{k=1}^n \frac{|\pipr{u_k, p}|}{\pinorm{p}} \min\set{1, \alpha_k, \beta_k}, 1}
    \quad (p \in \Delta_n), \label{eq:bound}\\
    \E_p\sqbr{\Err_{S, t}(p)} &\le \min\set{\sqrt{\frac{1}{n} \sum_{k=1}^n \min\set{1, \alpha_k, \beta_k}^2}, 1}
  \end{align}
  where
  \begin{align}
    \alpha_k \defeq \frac{\rho(D)}{|\lambda_k|} + \e^{t\lambda_k},
    \quad
    \beta_k \defeq \begin{cases}
      \frac{|\lambda_k| + \sqrt{2 \norm{\Pi^{-1} K}_{\infty, \mathrm{off}} |\lambda_k|}}{\sigma(K_{SS})} + 1 - \e^{t\lambda_k} & (\text{Type~A}), \\
      \frac{|\lambda_k|}{\sigma(K_{SS})} + 1 - \e^{t\lambda_k} & (\text{Type~B}) \label{def:alpha-beta}
    \end{cases}
  \end{align}
  for $k \in [n]$.
\end{theorem}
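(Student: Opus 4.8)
The plan is to assemble the theorem from the preceding lemmas, since the substantive analytic work has already been isolated there; what remains is a careful combination. First I would dispose of the projection. By \Cref{prop:solution}, the exact solution $\e^{tK}p$ lies in $\Delta_n$ whenever $p \in \Delta_n$, so I may apply \Cref{prop:pythagorean} with $a = \e^{tK}p$ and $q = Vp$ to obtain
\begin{align*}
  \Err_{S, t}(p)
  = \frac{\pinormbig{\proj(Vp) - \e^{tK}p}}{\pinorm{p}}
  \le \frac{\pinormbig{\prn[\big]{V - \e^{tK}}p}}{\pinorm{p}}.
\end{align*}
For Type~A the projection is vacuous by \Cref{prop:Vp-in-Delta}, but the inequality above holds uniformly and is what the rest of the argument needs.

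Next I would expand $p$ in the orthonormal eigenbasis $\set{u_1, \dotsc, u_n}$ and invoke \Cref{lem:error-expand}, which directly supplies both the pointwise bound $\pinormbig{\prn[\big]{V - \e^{tK}}p}/\pinorm{p} \le \sum_{k} \frac{|\pipr{u_k, p}|}{\pinorm{p}} \pinormbig{\prn[\big]{V - \e^{tK}}u_k}$ and the root-mean-square bound used for the expectation over the vertices of $\Delta_n$. The problem then reduces to controlling the single-eigenvector quantity $\pinormbig{\prn[\big]{V - \e^{tK}}u_k}$ for each $k$.

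Here I would feed \Cref{lem:bound-on-Vu} into \Cref{lem:bound-on-V-minus-exp-u}. The latter bounds $\pinormbig{\prn[\big]{V - \e^{tK}}u_k}$ by $\min\set{1, \pinorm{Vu_k} + \e^{t\lambda_k}, \pinorm{(I_n - V)u_k} + 1 - \e^{t\lambda_k}}$. Substituting $\pinorm{Vu_k} \le \rho(D)/|\lambda_k|$ into the second slot produces exactly $\alpha_k$, and substituting the type-dependent bound on $\pinorm{(I_n - V)u_k}$ into the third slot produces $\beta_k$, while the leading $1$ is retained verbatim. Thus $\pinormbig{\prn[\big]{V - \e^{tK}}u_k} \le \min\set{1, \alpha_k, \beta_k}$ for both types, and plugging this into the two displays from \Cref{lem:error-expand} yields the summation and RMS bounds, albeit without the outer minimum with $1$.

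Finally I would take the outer minimum with $1$, which is furnished by \Cref{lem:error-uniform-bound}: it gives $\Err_{S, t}(p) \le 1$ pointwise, hence $\E_p\sqbr{\Err_{S, t}(p)} \le 1$ after taking expectations, and combining with the eigenvector bounds delivers the stated $\min\set{\cdot, 1}$ in both inequalities. Since every ingredient is quotable, the argument is essentially bookkeeping and I do not expect a genuine obstacle; the only points demanding care are verifying $\e^{tK}p \in \Delta_n$ before invoking the Pythagorean inequality, and tracking that the constant $1$ inside $\min\set{1, \alpha_k, \beta_k}$ comes from the uniform per-eigenvector bound in \Cref{lem:bound-on-V-minus-exp-u}, whereas the outer $\min\set{\cdot, 1}$ comes from \Cref{lem:error-uniform-bound}.
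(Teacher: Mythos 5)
Your proposal is correct and follows exactly the paper's own route: the paper proves \Cref{thm:error-main} precisely by combining \Cref{prop:pythagorean} (justified via $\e^{tK}p \in \Delta_n$ from \Cref{prop:solution}), \Cref{lem:error-expand}, \Cref{lem:bound-on-V-minus-exp-u}, \Cref{lem:bound-on-Vu}, and \Cref{lem:error-uniform-bound}, with the inner $\min\set{1, \alpha_k, \beta_k}$ coming from the per-eigenvector lemma and the outer $\min\set{\cdot, 1}$ from the uniform bound, just as you describe. Your bookkeeping of which lemma supplies which ingredient, including the distinction between the two occurrences of the constant $1$, matches the paper's intended assembly.
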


\Cref{thm:error-main} claims that the output of the RCMC method for an initial vector $p \in \Delta_n$ becomes a better approximation to the analytic solution $x(t) = \e^{tK}p$ if $p$ comprises few eigencomponents of $K$ with eigenvalues closer to $\sigma(K_{SS})$ or $\rho(D)$.
Our numerical experiments in \Cref{sec:experiments} show that~\eqref{eq:bound} for Type~B provides a sharp bound on the actual errors.

\Cref{thm:error-main} also leads to a principled way of determining the appropriate time $t^*$ corresponding to $\set{S, T}$ as follows.
Since
\begin{gather}
  \E_p\sqbr{\Err_{S, t}(p)}
  \le \sqrt{\frac{1}{n} \sum_{k=1}^n \min\set{1, \alpha_k, \beta_k}^2}
  \le \sqrt{\frac{1}{n} \sum_{k=1}^n f(t)^2}
  = f(t)
\shortintertext{with}
  f(t) \defeq \max_{\lambda < 0} \min \set{\frac{\rho(D)}{|\lambda|} + \e^{t\lambda}, \frac{|\lambda|}{\sigma(K_{SS})} + 1 - \e^{t\lambda}}\label{eq:minimize-error-problem}
\end{gather}
holds on Type~B, it is a natural idea to set $t$ as the minimizer $t^*$ of~\eqref{eq:minimize-error-problem}.
By calculation (see~\Cref{sec:determining-optimal-time}), we have
\begin{align}\label{def:t}
  t^* = \frac{\ln 2}{\sqrt{\sigma(K_{SS})\rho(D)}}.
\end{align}
The formula of $t^{(k)}_{\mathrm{eigen}}$ in~\eqref{def:t-k-eig} is derived in this way.
While it is possible to consider the optimal time based on the upper bound for Type~A, as we will see in \Cref{sec:experiments}, $\Err_{S, t}(p)$ on Type~A also falls below the upper bound for Type~B in practice.
Therefore, using $t^{(k)}_{\mathrm{eigen}}$ is practical even for Type~A.

\subsection{Proofs of \texorpdfstring{\Cref{lem:error-uniform-bound,lem:error-expand,lem:bound-on-V-minus-exp-u}}{Lemmas \ref{lem:error-uniform-bound} to \ref{lem:bound-on-V-minus-exp-u}}}\label{sec:proof-lem-error-expand}

This section gives proofs of \Cref{lem:error-uniform-bound,lem:error-expand,lem:bound-on-V-minus-exp-u}.
We denote $V - \e^{tK}$ by $E$ for notational simplicity.

Here, we introduce the notion of the \emph{matrix norm}.
For $I, J \subseteq [n]$, consider a matrix $A \in \R^{I \times J}$.
The \emph{matrix norm} of $A$ is defined by
\begin{align}\label{def:matrix-norm}
  \pinorm{A}
  \defeq \max_{v \in \R^J \setminus \set{\zeros}} \frac{\pinorm{Av}}{\pinorm{v}}
  = \sqrt{\rho(AA^*)}
  = \sqrt{\rho(A^*A)}.
\end{align}
Note that different norms are used in the numerator and the denominator in~\eqref{def:matrix-norm} unless $I = J$.
If $I = J$ and $A$ is self-adjoint, $\pinorm{A}$ is equal to $\rho(A)$.
See \Cref{sec:matrix-norm} for details.

\begin{proof}[{Proof of \Cref{lem:error-uniform-bound}}]
  By \cref{prop:pythagorean}, it suffices to show $\pinorm{E} \le 1$.
  Since $E$ is self-adjoint, we have $\pinorm{E} = \rho(E)$.
  Recall from \Cref{sec:master-equations-with-detailed-balance,prop:v-positive-semi-definite} that $O \preceq \e^{tK} \preceq I_n$ and $O \preceq V \preceq I_n$ hold.
  Hence, we have $E = V - \e^{tK} \preceq I_n - O = I_n$ and $E = V - \e^{tK} \succeq O - I_n = I_n$, meaning that $\rho(E) \le 1$ as required.
\end{proof}

\begin{proof}[{Proof of \Cref{lem:error-expand}}]
  Since $\set{u_1, \dotsc, u_n}$ is an orthonormal basis of $\R^n$, $p$ can be expanded as
  \begin{align}\label{eq:expand-p}
    p = \sum_{k=1}^n \pipr{u_k, p} u_k
  \end{align}
  (see \Cref{sec:adjointness}).
  Substituting~\eqref{eq:expand-p} into $\pinorm{Ep}$, we get
  \begin{align}
    \frac{\pinorm{Ep}}{\pinorm{p}}
    = \frac{1}{\pinorm{p}} \pinorm{\sum_{k=1}^n \pipr{u_k, p} Eu_k}
    \le \sum_{k=1}^n \frac{|\pipr{u_k, p}|}{\pinorm{p}} \pinorm{Eu_k}
  \end{align}
  by the triangle inequality.
  Thus,~\eqref{eq:error-expand} is proved.

  We next consider the expectation error.
  It follows from Jensen's inequality that
  \begin{align}
    \E_p\sqbr{\frac{\pinorm{Ep}}{\pinorm{p}}}^2 \le \E_p\sqbr{\frac{\pinorm{Ep}^2}{\pinorm{p}^2}}.
  \end{align}
  Since $\pinorm{Ep}^2$ for $p \in \Delta_n$ can be written as
  \begin{align}
    \pinorm{Ep}^2
    = \sum_{k=1}^n \sum_{l=1}^n \pipr{u_k, p} \pipr{u_l, p} \pipr{Eu_k, Eu_l}
  \end{align}
  by~\eqref{eq:expand-p}, we have
  \begin{align}\label{eq:exp-expand}
    \E_p\sqbr{\frac{\pinorm{Ep}^2}{\pinorm{p}^2}}
    = \sum_{k=1}^n \sum_{l=1}^n \E_p\sqbr{\frac{\pipr{u_k, p} \pipr{u_l, p}}{\pinorm{p}^2}} \pipr{Eu_k, Eu_l}.
  \end{align}
  Recall that the expectation is taken over the uniform distribution on the vertices $\set{e_1, \dotsc, e_n}$ of $\Delta_n$, where $e_i \in \R^n$ is the vector whose $i$th component is $1$ and others are $0$ for $i \in [n]$.
  The expectation in the right-hand side of~\eqref{eq:exp-expand} is calculated as
  \[
    \E_p\sqbr{\frac{\pipr{u_k, p} \pipr{u_l, p}}{\pinorm{p}^2}}
    = \frac{1}{n} \sum_{i=1}^n \frac{\pipr{u_k, e_i} \pipr{u_l, e_i}}{\pinorm{e_i}^2}
    = \frac{1}{n} \sum_{i=1}^n \frac{{(u_k)}_i{(u_l)}_i}{\pi_i}
    = \frac{\pipr{u_k, u_l}}{n},
  \]
  which is $\frac{1}{n}$ if $k = l$ and $0$ otherwise.
  Hence, we have
  \begin{align}
    \E_p\sqbr{\frac{\pinorm{Ep}^2}{\pinorm{p}^2}}
    = \frac{1}{n} \sum_{k=1}^n \pipr{Eu_k, Eu_k}
    = \frac{1}{n} \sum_{k=1}^n \norm{Eu_k}^2
  \end{align}
  and the desired bound~\eqref{eq:exp-error-expand} can be obtained by taking the square root.
\end{proof}

\begin{proof}[{Proof of \Cref{lem:bound-on-V-minus-exp-u}}]
  We give three upper bounds on $\pinorm{Eu_k}$ which proves~\eqref{eq:error-expand}.
  First, from the self-adjointness of $E = V - \e^{tK}$, we have $\pinorm{Eu_k} \le \pinorm{E} \pinorm{u_k} = \rho(E)$.
  By $O \preceq V, \e^{tK} \preceq I_n$, we get $-I_n \preceq E \preceq I_n$ and hence $\rho(E) \le 1$ holds.
  Next, we have $\pinorm{Eu_k} \le \pinorm{Vu_k} + \pinormbig{\e^{tK}u_k} = \pinorm{Vu_k} + \e^{t\lambda_k} \le \pinorm{Vu_k}$.
  Finally, we obtain $\pinorm{Eu_k} \le \pinorm{(I_n - V) u_k} + \pinormbig{\prn[\big]{I_n - \e^{tK}} u_k} = \pinorm{(I_n - V) u_k} + 1 - \e^{t\lambda_k}$.
\end{proof}

\subsection{Bounding \texorpdfstring{$\pinorm{Vu}$}{||Vu||\textunderscore π}}\label{sec:bounding-Vu}
We give a bound on $\pinorm{Vu}$ for an eigenvector $u$ of $K$.

\begin{lemma}
  For an eigenpair $(\lambda, u)$ of $K$ with $\pinorm{u} = 1$, on both Types~A and~B, it holds that $\pinorm{Vu} \le \dfrac{\rho(D)}{|\lambda|}$.
\end{lemma}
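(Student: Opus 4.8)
The plan is to factor $V$ through the matrix $\Omega$ and reduce the whole estimate to a single identity linking the eigenvector $u$ to the Schur complement $D$. Recall from the discussion preceding \Cref{prop:v-positive-semi-definite} that $V = \Omega^* V_{TT} \Omega$ and $M = \Omega\Omega^*$, so that $Vu = \Omega^* V_{TT}(\Omega u)$ and the magnitude of $\Omega u$ governs $\pinorm{Vu}$.

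The crux is the identity $\lambda\,\Omega u = D u_T$, valid for every eigenpair $(\lambda, u)$ of $K$. I would prove it by splitting $Ku = \lambda u$ along $\set{S, T}$ into $K_{SS}u_S + K_{ST}u_T = \lambda u_S$ and $K_{TS}u_S + K_{TT}u_T = \lambda u_T$. The first equation gives $u_S - \lambda K_{SS}^{-1}u_S = -K_{SS}^{-1}K_{ST}u_T$; feeding this into $\lambda\,\Omega u = \lambda u_T - \lambda K_{TS}K_{SS}^{-1}u_S$, which comes from the second equation, collapses the cross terms into $\prn[\big]{K_{TT} - K_{TS}K_{SS}^{-1}K_{ST}}u_T = D u_T$. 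This is the one genuinely non-routine step and the place I expect the main difficulty, since it is where the eigenvalue equation must be fused with the Schur-complement structure; everything afterward is norm bookkeeping. When $\lambda = 0$ the asserted bound is infinite and hence vacuous, so I may assume $\lambda \ne 0$.

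Given the identity, I would bound $\pinorm{Vu}$ in two stages. First, using $V = \Omega^* V_{TT}\Omega$, the defining property of the adjoint $\pipr{\Omega^* a, c} = \pipr{a, \Omega c}$, and $\Omega\Omega^* = M$, I obtain $\pinorm{Vu}^2 = \pipr{\Omega u, V_{TT} M V_{TT}\,\Omega u}$. Applying the congruence by $V_{TT}^{\frac12}$ to the inequality $V_{TT}^{\frac12} M V_{TT}^{\frac12} \preceq I_T$ of \Cref{lem:order-M-V_TT-rephrase}, followed by $V_{TT} \preceq I_T$, yields $V_{TT} M V_{TT} \preceq V_{TT} \preceq I_T$, whence $\pinorm{Vu} \le \pinorm{\Omega u}$; in other words $\Omega^* V_{TT}$ acts as a $\pi$-norm contraction. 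This step treats both types at once, because the type enters only through $V_{TT}$, whose relevant bounds hold in either case.

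Second, I would estimate $\pinorm{\Omega u} = \abs{\lambda}^{-1}\pinorm{D u_T}$ by means of the identity. Since $D$ is again a rate constant matrix with stationary distribution proportional to $\pi_T$, it is self-adjoint on the $\pi_T$-inner product space, so its matrix norm equals $\rho(D)$ and $\pinorm{D u_T} \le \rho(D)\pinorm{u_T}$. Finally $\pinorm{u_T} \le \pinorm{u} = 1$, as $u_T$ is a subvector of the normalized $u$ and the $\pi$-norm is a weighted Euclidean norm. Chaining these gives $\pinorm{Vu} \le \pinorm{\Omega u} \le \rho(D)/\abs{\lambda}$ for both Types~A and~B.
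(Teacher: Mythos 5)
Your proposal is correct and follows essentially the same route as the paper's proof: the same factorization $V = \Omega^* V_{TT} \Omega$, the same key identity $Du_T = \lambda\,\Omega u$ obtained by splitting the eigenvalue equation along $\set{S,T}$, and the same appeal to \Cref{lem:order-M-V_TT-rephrase} to control the quadratic form $\pipr{\Omega u, V_{TT} M V_{TT}\,\Omega u}$, finishing with $\pinorm{D} = \rho(D)$ and $\pinorm{u_T} \le 1$. The only (cosmetic) difference is that you bound the quadratic form via the Löwner-order chain $V_{TT} M V_{TT} \preceq V_{TT} \preceq I_T$, whereas the paper splits $M^{\frac12}V_{TT}$ into operator-norm factors $\pinormbig{M^{\frac12}V_{TT}^{\frac12}}\,\pinormbig{V_{TT}^{\frac12}}$; both are equivalent uses of the same lemma.
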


\begin{proof}
  Recall that the matrix $V$ can be expressed as $V = \Omega^* V_{TT} \Omega$, where $\Omega \in \R^{T \times n}$ is defined by~\eqref{def:omega}.
  Then, it holds that
  \begin{align}\label{eq:Vu-square}
    \pinorm{Vu}^2
    = \piprbig{u, V^2u}
    = \pipr{u, \Omega^*V_{TT}\Omega\Omega^*V_{TT}\Omega u}
    = \pipr{\Omega u, V_{TT}MV_{TT}\Omega u}.
  \end{align}
  By $K_{SS}u_S + K_{ST}u_T = \lambda u_S$ and $K_{TS}u_S + K_{TT}u_T = \lambda u_T$, we have $Du_T = \lambda\prn[\big]{u_T - K_{TS}K_{SS}^{-1}u_S} = \lambda \Omega u$.
  Substituting $\Omega u = \frac{1}{\lambda} Du_T$ into~\eqref{eq:Vu-square}, we get
  \begin{align}
    \pinorm{Vu}^2
    = \frac{1}{\lambda^2} \pipr{Du_T, V_{TT}MV_{TT}Du_T}
    = \frac{1}{\lambda^2} \pinormbig{M^{\frac12}V_{TT}Du_T}
  \end{align}
  and hence $\pinorm{v} \le \frac{1}{\abs{\lambda}} \pinormbig{M^{\frac12} V_{TT}^{\frac12}} \pinormbig{V_{TT}^{\frac12}} \pinorm{D} \pinorm{u_T}$ holds.
  By \Cref{lem:order-M-V_TT-rephrase}, we have $\pinormbig{M^{\frac12} V_{TT}^{\frac12}}^2 = \rho\prn[\big]{V_{TT}^{\frac12} M V_{TT}^{\frac12}} \le 1$ and $\pinormbig{V_{TT}^{\frac12}}^2 = \rho(V_{TT}) \le 1$.
  Since $\pinorm{D} = \rho(D)$ and $\pinorm{u_T} \le \pinorm{u} = 1$, we obtain the desired inequality.
\end{proof}

\subsection{Bounding \texorpdfstring{$\pinorm{(I_n - V)u}$}{||(I - V)u||\textunderscore π}}\label{sec:bounding-I-V-u}

We first consider Type~B.
As described in \Cref{sec:positive-semi-definiteness-of-V}, the matrix $V$ of Type~B is the orthogonal projection onto the QSSA subspace $\Im V$.
On the other hand, $I_n - V$ is the orthogonal projection onto $\ker V$.
Let $N \defeq I_S + K_{SS}^{-1}K_{ST}K_{TS}K_{SS}^{-1}$.
By the Woodbury matrix identity, it holds that
\begin{align}
  M^{-1} &= I_T - K_{TS}K_{SS}^{-1}N^{-1}K_{SS}^{-1}K_{ST}, \\
  N^{-1} &= I_S - K_{SS}^{-1}K_{ST}M^{-1}K_{TS}K_{SS}^{-1}, \\
  M^{-1}K_{TS}K_{SS}^{-1} &= K_{TS}K_{SS}^{-1}N^{-1}, \\
  K_{SS}^{-1}K_{ST}M^{-1} &= N^{-1}K_{SS}^{-1}K_{ST},
\end{align}
and hence $I_n - V$ can be expressed as
\begin{align}
  I_n - V
  &= \begin{pmatrix}
    I_S - K_{SS}^{-1}K_{ST}M^{-1}K_{TS}K_{SS}^{-1} & K_{SS}^{-1}K_{ST}M^{-1} \\
    M^{-1}K_{TS}K_{SS}^{-1} & I_T - M^{-1}
  \end{pmatrix} \\
  &= \begin{pmatrix}
    N^{-1} & N^{-1}K_{SS}^{-1}K_{TS} \\
    K_{TS}K_{SS}^{-1}N^{-1} & K_{TS}K_{SS}^{-1}N^{-1}K_{SS}^{-1}K_{ST}
  \end{pmatrix} = \Psi^* N^{-1} \Psi,\label{eq:In-V}
\end{align}
where $\Psi \defeq \begin{pmatrix} I_S & K_{SS}^{-1}K_{ST} \end{pmatrix}$.

\begin{lemma}\label{lem:bound-u-v-delta}
  For an eigenpair $(\lambda, u)$ of $K$ with $\pinorm{u} = 1$, the matrix $V$ of Type~B satisfies $\pinorm{(I_n - V)u} \le \dfrac{|\lambda|}{\sigma(K_{SS})}$.
\end{lemma}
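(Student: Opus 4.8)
The plan is to exploit the clean structure of Type~B, where $I_n - V = \Psi^* N^{-1} \Psi$ has already been derived in~\eqref{eq:In-V}, with $\Psi = \begin{pmatrix} I_S & K_{SS}^{-1}K_{ST} \end{pmatrix}$ and $N = I_S + K_{SS}^{-1}K_{ST}K_{TS}K_{SS}^{-1}$. First I would compute $\pinorm{(I_n - V)u}^2 = \pipr{u, (I_n - V)^2 u}$, using the fact that $I_n - V$ is the orthogonal projection onto $\ker V$ and hence idempotent and self-adjoint, so $(I_n - V)^2 = I_n - V$. This reduces the squared norm to $\pipr{u, (I_n - V)u} = \pipr{\Psi u, N^{-1} \Psi u}$, turning the problem into bounding a quadratic form in $N^{-1}$ evaluated at $\Psi u$.

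The next step is to express $\Psi u$ using the eigenrelation, mirroring the manipulation used in \Cref{sec:bounding-Vu}. From $K_{SS}u_S + K_{ST}u_T = \lambda u_S$, one solves for $u_S = K_{SS}^{-1}(\lambda u_S - K_{ST}u_T)$, so that $\Psi u = u_S + K_{SS}^{-1}K_{ST}u_T = \lambda K_{SS}^{-1} u_S$. This is the key algebraic identity: it shows $\Psi u = \lambda K_{SS}^{-1} u_S$, introducing the factor $\lambda$ that will produce the $|\lambda|$ in the numerator. Substituting this in gives
\begin{align}
  \pinorm{(I_n - V)u}^2 = \lambda^2 \pipr{K_{SS}^{-1} u_S, N^{-1} K_{SS}^{-1} u_S}.
\end{align}

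From here I would bound the quadratic form by operator norms. Writing $N = I_S + (K_{TS}K_{SS}^{-1})^* (K_{TS}K_{SS}^{-1})$, we have $N \succeq I_S$, so $N^{-1} \preceq I_S$, giving $\pipr{K_{SS}^{-1}u_S, N^{-1}K_{SS}^{-1}u_S} \le \pinorm{K_{SS}^{-1}u_S}^2 \le \pinorm{K_{SS}^{-1}}^2 \pinorm{u_S}^2$. Since $K_{SS}$ is self-adjoint, $\pinorm{K_{SS}^{-1}} = \rho(K_{SS}^{-1}) = 1/\sigma(K_{SS})$, and $\pinorm{u_S} \le \pinorm{u} = 1$. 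Combining, $\pinorm{(I_n - V)u}^2 \le \lambda^2 / \sigma(K_{SS})^2$, and taking the square root yields the claimed bound.

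The main obstacle I anticipate is verifying the identity $\Psi u = \lambda K_{SS}^{-1} u_S$ carefully and confirming that the reduction $(I_n - V)^2 = I_n - V$ is legitimately available — both hinge on the idempotency of $V$ for Type~B, established at the end of \Cref{sec:positive-semi-definiteness-of-V}. One subtlety worth double-checking is that $\pinorm{\cdot}$ on the subvector $u_S$ uses the restricted metric $\Pi_S^{-1}$ on $\R^S$ (as set up in \Cref{sec:positive-semi-definiteness-of-V}), so that $\pinorm{u_S} \le \pinorm{u}$ holds componentwise; this is immediate since dropping coordinates cannot increase the $\pi$-norm. The bound $N^{-1} \preceq I_S$ is the only place the Löwner order enters, and it is elementary, so the argument should be short and self-contained.
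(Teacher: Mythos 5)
Your proof is correct and follows essentially the same route as the paper's: both reduce $\pinorm{(I_n - V)u}^2$ to the quadratic form $\pipr{\Psi u, N^{-1}\Psi u}$ (you via idempotency of $I_n - V$, the paper via $\Psi\Psi^* = N$, which is the same underlying fact), exploit the identity $\Psi u = \lambda K_{SS}^{-1} u_S$, and conclude from $N \succeq I_S$ together with $\pinorm{K_{SS}^{-1}} = 1/\sigma(K_{SS})$ and $\pinorm{u_S} \le 1$. The only cosmetic difference is that you apply $N^{-1} \preceq I_S$ directly to the quadratic form, whereas the paper factors through the operator norm of $N^{-\frac12}$.
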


\begin{proof}
  By~\eqref{eq:In-V}, we have
  \[
    \pinorm{(I_n - V)u}^2
    = \pinorm{\Psi^*N^{-1}\Psi u}^2
    = \pipr{\Psi u, N^{-1}\Psi\Psi^*N^{-1}\Psi u}
    = \pipr{\Psi u, N^{-1}\Psi u},
  \]
  where we used $\Psi\Psi^* = N$ in the last equality.
  The vector $\Psi u$ can be written as
  \[
    \Psi u
    = u_S + K_{SS}^{-1}K_{ST}u_T
    = K_{SS}^{-1}\prn[\big]{K_{SS}u_S + K_{ST}u_T}
    = \lambda K_{SS}^{-1} u_S.
  \]
  Thus, it holds $\pinorm{(I_n - V)u}^2 = \lambda^2 \pinormbig{N^{\frac12}K_{SS}^{-1}u_S}^2$ and
  \begin{align}
    \pinorm{(I_n - V)u}
    \le \abs{\lambda} \pinormbig{N^{\frac12}} \pinormbig{K_{SS}^{-1}} \pinorm{u_S}
    \le \frac{\abs{\lambda}}{\sigma(K_{SS})} \pinormbig{N^{\frac12}}.
  \end{align}
  Therefore, it suffices to show $\pinormbig{N^{-\frac12}} \le 1$.
  Since $N$ is self-adjoint, we have $\pinormbig{N^{-\frac12}}^2= \rho(N^{-1})$.
  In addition, $N \succeq I_S$ holds by definition, and thus $N^{-1} \preceq I_S$.
  Hence, we have $\rho(N^{-1}) \le 1$.
\end{proof}

We next consider Type~A.
For a vector $a \in \R^n$ and a positive vector $b \in \Rpp^n$, let $\frac{a}{b}$ denote the $n$-dimensional vector whose $i$th component is $\frac{a_i}{b_i}$ for $i \in [n]$.

\begin{lemma}
  Let $(\lambda, u)$ be an eigenpair of $K$ and $\Delta \defeq K \diag\prn{\frac{u}{\pi}} - \diag\prn{\frac{u}{\pi}}K \in \R^{n \times n}$.
  For the matrix $V$ of Type~A, we have $\pinorm{(I_n - V)u} \le \dfrac{\abs{\lambda} + \pinorm{\Delta}}{\sigma(K_{SS})}$.
\end{lemma}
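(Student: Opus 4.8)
The plan is to mirror the structure of the Type~B proof (\Cref{lem:bound-u-v-delta}) while accounting for the fact that $V_{TT}$ of Type~A is $\diag(\onestr_T M)^{-1}$ rather than $M^{-1}$, so that $V$ is no longer an exact projection. As in the Type~B case, I would begin by expressing $(I_n - V)u$ and squaring the $\pi$-norm. The natural first move is to compute $(I_n - V)u$ blockwise using~\eqref{def:V} and the eigenrelations $K_{SS}u_S + K_{ST}u_T = \lambda u_S$ and $K_{TS}u_S + K_{TT}u_T = \lambda u_T$, which gave the clean identity $\Psi u = \lambda K_{SS}^{-1}u_S$ for Type~B. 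The difficulty is that this identity relied on $V_{TT} = M^{-1}$ producing the projection structure $I_n - V = \Psi^* N^{-1}\Psi$; for Type~A no such factorization holds exactly.

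To bridge the gap, I would write the Type~A matrix as a perturbation of the Type~B matrix. Let $V^{\mathrm B}$ denote the Type~B matrix (the orthogonal projection onto $W$) and $V^{\mathrm A}$ the Type~A matrix. Then $(I_n - V^{\mathrm A})u = (I_n - V^{\mathrm B})u + (V^{\mathrm B} - V^{\mathrm A})u$, and by the triangle inequality it suffices to bound $\pinorm{(V^{\mathrm B} - V^{\mathrm A})u}$ by a term of the form $\pinorm{\Delta}/\sigma(K_{SS})$ and combine it with the Type~B bound $\abs{\lambda}/\sigma(K_{SS})$ from \Cref{lem:bound-u-v-delta}. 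Since both $V^{\mathrm A}$ and $V^{\mathrm B}$ share the factorization $V = \Omega^* V_{TT}\Omega$ with the same $\Omega$, the difference is $V^{\mathrm B} - V^{\mathrm A} = \Omega^*\prn[\big]{M^{-1} - \diag(\onestr_T M)^{-1}}\Omega$, so I would bound $\pinorm{(V^{\mathrm B}-V^{\mathrm A})u}$ through $\pinorm{\Omega^*}$, $\pinorm{\Omega u}$, and the $\pi$-norm of $M^{-1} - \diag(\onestr_T M)^{-1}$, recalling from \Cref{sec:bounding-Vu} that $\Omega u = \tfrac{1}{\lambda}Du_T$.

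The central obstacle, and the reason the commutator $\Delta = K\diag\prn{\frac{u}{\pi}} - \diag\prn{\frac{u}{\pi}}K$ enters, is controlling precisely how far Type~A deviates from the projection. I expect that $\Delta$ measures the failure of $u$ to commute with $K$ in a way that exactly captures the off-diagonal discrepancy between $M^{-1}$ and its diagonal surrogate $\diag(\onestr_T M)^{-1}$; intuitively, if $u$ were proportional to $\pi$ then $\diag\prn{\frac{u}{\pi}}$ would be scalar and $\Delta$ would vanish, matching the fact that the stationary eigenvector incurs no error. The hard part will be showing algebraically that the perturbation term is bounded by $\pinorm{\Delta}/\sigma(K_{SS})$ — that is, translating the entrywise difference $M - \diag(\onestr_T M)$ (a rate constant matrix by \Cref{lem:order-M-V_TT}) acting on the relevant vectors into the commutator norm $\pinorm{\Delta}$. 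I would handle this by expanding $\Delta u$ or $\Delta\pi$-type quantities, using $K\pi = \zeros$ and the detailed-balance self-adjointness $K\Pi = \Pi\trsp K$ to rewrite $\diag\prn{\frac{u}{\pi}}$ in terms of the eigenstructure, and then matching the resulting expression to the block entries of $M - \diag(\onestr_T M)$. Once that identification is made, the remaining estimates are routine applications of submultiplicativity and the bounds $\pinorm{N^{-1/2}}\le 1$, $\pinorm{K_{SS}^{-1}}\le 1/\sigma(K_{SS})$ already established in the Type~B argument.
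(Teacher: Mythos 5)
Your route is genuinely different from the paper's, and it contains a gap that the tools you name cannot close. The paper never passes through Type~B. Its proof is direct: writing $v \defeq Vu$, it establishes $u - v = \Omega^*(u_T - v_T) + \lambda z$, where $z \in \R^n$ has $S$-block $K_{SS}^{-1}u_S$ and zero $T$-block, bounds $\pinorm{\lambda z} \le \abs{\lambda}/\sigma(K_{SS})$, and then proves the \emph{exact identity} $V_{TT}^{-\frac12}(u_T - v_T) = V_{TT}^{\frac12}\Omega\Delta w$, where $w \in \R^n$ has $S$-block $K_{SS}^{-1}\pi_S$ and zero $T$-block. That identity is manufactured by detailed balance, $\diag\prn[\big]{\onestr_S K_{SS}^{-1}K_{ST}}u_T = \diag\prn[\big]{K_{TS}K_{SS}^{-1}\pi_S}\frac{u_T}{\pi_T}$, followed by adding and subtracting $K_{TS}\diag\prn[\big]{\frac{u_S}{\pi_S}}K_{SS}^{-1}\pi_S$ so that the commutator blocks $\Delta_{SS}$ and $\Delta_{TS}$ appear; only \emph{after} this rewriting do submultiplicative estimates ($\pinorm{V_{TT}^{\frac12}\Omega} \le 1$, $\pinorm{K_{SS}^{-1}} \le 1/\sigma(K_{SS})$) finish the proof. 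This identity, not routine norm estimation, is the entire content of the lemma, and it is exactly the step your outline defers as ``the hard part.''

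The concrete mechanism you do commit to cannot substitute for it. You would bound $\pinorm{(V^{\mathrm B}-V^{\mathrm A})u} = \pinorm{\Omega^*\prn[\big]{M^{-1}-\diag\prn{\onestr_T M}^{-1}}\Omega u}$ by the product $\pinorm{\Omega^*}\cdot\pinorm{M^{-1}-\diag\prn{\onestr_T M}^{-1}}\cdot\pinorm{\Omega u}$. The first two factors depend only on $K$ and $S$, not on $u$ or $\lambda$, and $\pinorm{\Omega u}\le\pinorm{\Omega}$, so the best this product can ever give is a $u$-independent constant; it can never be proportional to $\pinorm{\Delta}$, which vanishes for slow modes (recall $\pinorm{\Delta} \le \sqrt{2\abs{\lambda}\,\norm[\big]{\Pi^{-1}K}_{\infty,\mathrm{off}}}$, so your intermediate claim $\pinorm{(V^{\mathrm B}-V^{\mathrm A})u} \le \pinorm{\Delta}/\sigma(K_{SS})$ forces the bound to tend to $0$ as $\abs{\lambda}\to 0$). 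Worse, invoking $\Omega u = \frac{1}{\lambda}Du_T$ as you suggest gives $\pinorm{\Omega u} \le \rho(D)/\abs{\lambda}$, injecting a $1/\abs{\lambda}$ blow-up that is absent from the target. The cancellation your norm-product discards is exact and structural: both types satisfy $V\pi = \pi$, so the middle operator annihilates $\Omega\pi = M\pi_T$ (indeed $M\pi_T = \diag\prn{\onestr_T M}\pi_T$ by self-adjointness of $M$), and the smallness of $\prn[\big]{M^{-1}-\diag\prn{\onestr_T M}^{-1}}\Omega u$ for eigenvectors of $K$ is an analogous fine cancellation that only the commutator identity makes visible --- proving it is essentially equivalent to the lemma itself, so your plan is circular at precisely the point where it is vague. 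The triangle-inequality reduction through $V^{\mathrm B}$ is harmless in itself, but it buys nothing: all the work remains in the step you left open, and the bounds you import from the Type~B argument ($\pinorm{N^{-\frac12}} \le 1$, etc.) play no role in it.
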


\begin{proof}
  Let $v \defeq Vu$.
  First, we have
  \begin{align}
    u - v
    &= \begin{pmatrix} u_S + K_{SS}^{-1}K_{ST}v_T \\ u_T - v_T \end{pmatrix}
    = \begin{pmatrix} -K_{SS}^{-1}K_{ST}(u_T - v_T) + u_S + K_{SS}^{-1}K_{ST}u_T \\ u_T - v_T \end{pmatrix} \\
    &= \begin{pmatrix} -K_{SS}^{-1}K_{ST}(u_T - v_T) + \lambda K_{SS}^{-1} u_S \\ u_T - v_T \end{pmatrix}
    = \Omega^*(u_T - v_T) + \lambda\begin{pmatrix} K_{SS}^{-1} u_S \\ O \end{pmatrix}.\label{eq:delta-bound-0}
  \end{align}
  Since $\pinorm{\Omega^*(u_T - v_T)} = \pinormbig{M^{\frac12}(u_T - v_T)}$ by $\Omega\Omega^* = M$, we obtain
  \begin{align}\label{eq:delta-bound-u-v}
    \pinorm{u - v}
    \le \pinormbig{M^{\frac{1}{2}}\prn{u_T - v_T}} + \frac{\abs{\lambda}}{\sigma\prn{K_{SS}}}.
  \end{align}

  We next bound $\pinormbig{M^{\frac{1}{2}}\prn{u_T - v_T}}$.
  By $O \preceq M^{\frac{1}{2}}V_{TT}M^{\frac{1}{2}} \preceq I_T$ , we have
  \begin{align}\label{eq:delta-bound-M-to-V}
    \pinormbig{M^{\frac{1}{2}}\prn{u_T - v_T}}
    \le \pinormbig{M^{\frac{1}{2}}V_{TT}^{\frac{1}{2}}}\pinormbig{V_{TT}^{-\frac{1}{2}}\prn{u_T - v_T}}
    \le \pinormbig{V_{TT}^{-\frac{1}{2}}\prn{u_T - v_T}}.
  \end{align}
  By $\onestr_S K_{SS} + \onestr_T K_{TS} = 0$, we have
  \begin{gather}
    V_{TT}^{-1}
    = \diag\prn[\big]{\onestr_T M}
    = I_T + \diag\prn[\big]{\onestr_T K_{TS}K_{SS}^{-2}K_{ST}}
    = I_T - \diag\prn[\big]{\onestr_S K_{SS}^{-1}K_{ST}}
  \shortintertext{and}
    \begin{aligned}\label{eq:delta-bound-1}
      V_{TT}^{-\frac{1}{2}}(u_T - v_T)
      &= V_{TT}^{-\frac{1}{2}}\prn[\big]{u_T - V_{TT}\prn[\big]{u_T - K_{TS}K_{SS}^{-1}u_S}} \\
      &= V_{TT}^{\frac{1}{2}}\prn[\big]{V_{TT}^{-1}u_T - u_T + K_{TS}K_{SS}^{-1}u_S} \\
      &= V_{TT}^{\frac{1}{2}}\prn[\big]{K_{TS}K_{SS}^{-1}u_S - \diag\prn[\big]{\onestr_S K_{SS}^{-1}K_{ST}}u_T}.
    \end{aligned}
  \end{gather}
  By $\trsp{\prn{K_{ST}}} = \Pi_T^{-1}K_{TS}\Pi_S$ and $\trsp{\prn[\big]{K_{SS}^{-1}}} = \Pi_S^{-1}K_{SS}^{-1}\Pi_S$, we get
  \begin{align}\label{eq:delta-bound-2}
    \begin{aligned}
      \diag\prn[\big]{\onestr_S K_{SS}^{-1}K_{ST}}
      &= \diag\prn[\big]{\trsp{\prn[\big]{\onestr_S K_{SS}^{-1}K_{ST}}}} \\
      &= \diag\prn[\big]{\Pi_T^{-1}K_{TS}K_{SS}^{-1}\Pi_S\ones_S}
      = \diag\prn[\big]{K_{TS}K_{SS}^{-1}\pi_S}\Pi_T^{-1}.
    \end{aligned}
  \end{align}
  Combining~\eqref{eq:delta-bound-1} and~\eqref{eq:delta-bound-2}, we have
  \begin{align}
    V_{TT}^{-\frac{1}{2}}(u_T - v_T)
    &= V_{TT}^{\frac{1}{2}}\prn[\big]{K_{TS}K_{SS}^{-1}u_S - \diag\prn[\big]{K_{TS}K_{SS}^{-1}\pi_S}\tfrac{u_T}{\pi_T}} \\
    &= V_{TT}^{\frac{1}{2}}\prn[\big]{K_{TS}K_{SS}^{-1}u_S - \diag\prn[\big]{\tfrac{u_T}{\pi_T}}K_{TS}K_{SS}^{-1}\pi_S} \\
    &= V_{TT}^{\frac{1}{2}}\prn[\big]{K_{TS}K_{SS}^{-1}\diag\prn[\big]{\tfrac{u_S}{\pi_S}} - \diag\prn[\big]{\tfrac{u_T}{\pi_T}}K_{TS}K_{SS}^{-1}}\pi_S \\
    &= V_{TT}^{\frac{1}{2}}\prn[\big]{K_{TS}K_{SS}^{-1}\diag\prn[\big]{\tfrac{u_S}{\pi_S}} - K_{TS}\diag\prn[\big]{\tfrac{u_S}{\pi_S}}K_{SS}^{-1} \\
    &\qquad\qquad+ K_{TS}\diag\prn[\big]{\tfrac{u_S}{\pi_S}}K_{SS}^{-1} - \diag\prn[\big]{\tfrac{u_T}{\pi_T}}K_{TS}K_{SS}^{-1}}\pi_S \\
    &= V_{TT}^{\frac{1}{2}}\prn[\big]{K_{TS}K_{SS}^{-1}\prn[\big]{\diag\prn[\big]{\tfrac{u_S}{\pi_S}}K_{SS} - K_{SS}\diag\prn[\big]{\tfrac{u_S}{\pi_S}}} \\
    &\qquad\qquad+ K_{TS}\diag\prn[\big]{\tfrac{u_S}{\pi_S}} - \diag\prn[\big]{\tfrac{u_T}{\pi_T}}K_{TS}}K_{SS}^{-1}\pi_S \\
    &= V_{TT}^{\frac{1}{2}}\prn[\big]{-K_{TS}K_{SS}^{-1}\Delta_{SS} + \Delta_{TS}}K_{SS}^{-1}\pi_S \\
    &= V_{TT}^{\frac{1}{2}}\Omega\Delta\begin{pmatrix} K_{SS}^{-1}\pi_S \\ \zeros \end{pmatrix},
  \end{align}
  whose norm is bounded by
  \begin{align}\label{eq:delta-bound-V}
    \pinormbig{V_{TT}^{-\frac{1}{2}}(u_T - v_T)}
    \le \pinormbig{V_{TT}^{\frac{1}{2}}\Omega} \pinorm{\Delta} \pinorm{\begin{pmatrix} K_{SS}^{-1}\pi_S \\ \zeros \end{pmatrix}}.
  \end{align}
  By $V_{TT} \preceq M^{-1}$, it holds that
  \begin{gather}
    \pinormbig{V_{TT}^{\frac{1}{2}}\Omega}^2
    = \rho\prn[\big]{V_{TT}^{\frac{1}{2}}\Omega\Omega^*V_{TT}^{\frac{1}{2}}}
    = \rho\prn[\big]{V_{TT}^{\frac{1}{2}}MV_{TT}^{\frac{1}{2}}}
    \le 1\label{eq:delta-bound-a}\sbox0{\ref{eq:delta-bound-a}}
  \shortintertext{and}
    \pinorm{\begin{pmatrix} K_{SS}^{-1}\pi_S \\ \zeros \end{pmatrix}}
    = \pinormbig{K_{SS}^{-1}\pi_S}
    \le \pinormbig{K_{SS}^{-1}} \pinorm{\pi_S}
    \le \frac{1}{\sigma(K_{SS})}.\label{eq:delta-bound-b}
  \end{gather}
  By~\eqref{eq:delta-bound-u-v},~\eqref{eq:delta-bound-M-to-V}, and~\eqref{eq:delta-bound-V}--\eqref{eq:delta-bound-b}, we obtain the desired inequality.
\end{proof}

To bound $\pinorm{\Delta}$, we give two lemmas.
The first one is a generalization of the inequality that bounds the spectral norm of a matrix by the Frobenius norm.

\begin{lemma}\label{lem:spectral-frobenius}
  For $A \in \R^{n \times n}$, it holds that $\pinorm{A} \le \sqrt{\tr A^*A}$.
\end{lemma}

\begin{proof}
  The claim follows from the fact that $\pinorm{A}^2 = \rho(A^*A)$ is the maximum eigenvalue of $A^*A$, whereas $\tr A^*A$ is the sum of eigenvalues of $A^* A$.
\end{proof}

Recall that $L = -K\Pi$ is a graph Laplacian matrix.
It is well-known that the quadratic form $\trsp{a} L a$ of a graph Laplacian matrix with $a \in \R^n$ satisfies $\trsp{a}La = \frac12 \sum_{i \ne j} L_{ij} {(a_i - a_j)}^2$.
The following lemma is a variant of this identity for $K$.

\begin{lemma}\label{lem:laplacian-quadratic}
  For $a \in \R^n$, it holds that $\displaystyle \pipr{a, Ka} = -\frac{1}{2}\sum_{i \ne j} L_{ij}\prn{\frac{a_i}{\pi_i} - \frac{a_j}{\pi_j}}^2$.
\end{lemma}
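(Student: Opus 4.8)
The plan is to reduce the claim to the classical quadratic-form identity for graph Laplacians that is recalled just before the statement, namely $\trsp{b} L b = \frac{1}{2}\sum_{i \ne j} L_{ij}(b_i - b_j)^2$, by a change of variables that converts the $\pi$-inner product into an ordinary Euclidean one. Since $L$ was already shown to be symmetric with $L\ones_n = \zeros$, all the hypotheses of that identity are in place, so the work is purely to rewrite $\pipr{a, Ka}$ in terms of $L$.

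First I would express $K$ through the Laplacian $L \defeq -K\Pi$. Multiplying $L = -K\Pi$ on the right by $\Pi^{-1}$ gives $K = -L\Pi^{-1}$, so from the definition~\eqref{def:pi-inner-product} of the $\pi$-inner product we get $\pipr{a, Ka} = \trsp{a}\Pi^{-1}Ka = -\trsp{a}\Pi^{-1}L\Pi^{-1}a$. The next step is to fold the two factors of $\Pi^{-1}$ into the vector: because $\Pi^{-1}$ is diagonal, hence symmetric, setting $b \defeq \Pi^{-1}a$, whose $i$th entry is $\frac{a_i}{\pi_i}$, yields $\trsp{a}\Pi^{-1}L\Pi^{-1}a = \trsp{b}Lb$ and therefore $\pipr{a, Ka} = -\trsp{b}Lb$.

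Finally I would apply the well-known Laplacian identity stated above to $b$, obtaining $\trsp{b}Lb = \frac{1}{2}\sum_{i \ne j} L_{ij}(b_i - b_j)^2$, and substitute back $b_i = \frac{a_i}{\pi_i}$, carrying the leading minus sign through, to reach $\pipr{a, Ka} = -\frac{1}{2}\sum_{i \ne j} L_{ij}\prn{\frac{a_i}{\pi_i} - \frac{a_j}{\pi_j}}^2$, as claimed. The argument is entirely routine, so there is no genuine obstacle; the only points needing care are the bookkeeping of the sign produced by $K = -L\Pi^{-1}$ (which is exactly what turns the $+\frac12$ of the quoted identity into the $-\frac12$ of the lemma) and the observation that the symmetry of the diagonal matrix $\Pi^{-1}$ is what permits both factors to be absorbed into the single substitution $b = \Pi^{-1}a$.
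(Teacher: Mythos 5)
Your proof is correct and essentially identical to the paper's: both substitute $K = -L\Pi^{-1}$ (equivalently $L = -K\Pi$) into $\pipr{a, Ka} = \trsp{a}\Pi^{-1}Ka$, absorb the two diagonal factors $\Pi^{-1}$ into the vector $\frac{a}{\pi}$, and invoke the Laplacian quadratic-form identity quoted just before the lemma. The one caveat you inherit from the paper is the sign convention in that quoted identity: since the off-diagonal entries of $L = -K\Pi$ are non-positive, the identity (and hence the lemma, whose right-hand side would otherwise be non-negative while $\pipr{a,Ka} \le 0$ by negative semi-definiteness of $K$) is literally correct only with $L_{ij}$ read as the non-negative weight $\abs{L_{ij}}$, but this is an issue with the source statement, not a gap in your reasoning.
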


\begin{proof}
  Using $L = -K\Pi$, we obtain
  \begin{align}
    \pipr{a, Ka}
      = -\trsp{a}\Pi^{-1}L\Pi^{-1}a
      = -\trsp{\prn{\frac{a}{\pi}}} L \frac{a}{\pi}
      = -\frac{1}{2} \sum_{i \ne j} L_{ij} \prn{\frac{a_i}{\pi_i} - \frac{a_j}{\pi_j}}^2.
  \end{align}
\end{proof}

We now give an upper bound on $\pinorm{\Delta}$, which, together with \Cref{lem:bound-u-v-delta}, leads to the upper bound on $\pinorm{(I_n - V)u}$ for Type~A in \Cref{lem:bound-on-Vu}.

\begin{lemma}
  Let $(\lambda, u)$ be an eigenpair of $K$ and $\Delta \defeq K \diag\prn{\frac{u}{\pi}} - \diag\prn{\frac{u}{\pi}}K \in \R^{n \times n}$.
  Then, it holds that $\pinorm{\Delta} \le \sqrt{2 \abs{\lambda} \norm[\big]{\Pi^{-1}K}_{\infty,\mathrm{off}}}$.
\end{lemma}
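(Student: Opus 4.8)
The plan is to bound $\pinorm{\Delta}$ through the trace bound of \Cref{lem:spectral-frobenius}, reducing the claim to the estimate $\tr \Delta^*\Delta \le 2\abs{\lambda}\norm{\Pi^{-1}K}_{\infty,\mathrm{off}}$, after which $\pinorm{\Delta}\le\sqrt{\tr\Delta^*\Delta}$ gives the result. The first step is an entrywise description of $\Delta$. Writing $y \defeq \frac{u}{\pi}$ (so $y_i = u_i/\pi_i$) and $\Delta = K\diag(y) - \diag(y)K$, a direct computation gives $\Delta_{ij} = K_{ij}y_j - y_iK_{ij} = K_{ij}(y_j - y_i)$; in particular $\Delta$ has vanishing diagonal, which is what lets the sum collapse onto off-diagonal pairs later.

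Next I would expand the trace using $\Delta^* = \Pi\trsp{\Delta}\Pi^{-1}$. Since $\Pi$ is diagonal, this yields $\tr\Delta^*\Delta = \sum_{i,j}\frac{\pi_i}{\pi_j}\Delta_{ji}^2 = \sum_{i\ne j}\frac{\pi_i}{\pi_j}K_{ji}^2(y_i - y_j)^2$. The detailed balance condition~\eqref{def:detailed-balance}, $K_{ji}\pi_i = K_{ij}\pi_j$, does the essential work here: it converts the asymmetric weight $\frac{\pi_i}{\pi_j}K_{ji}^2$ into the symmetric product $K_{ij}K_{ji}$, giving the clean expression $\tr\Delta^*\Delta = \sum_{i\ne j}K_{ij}K_{ji}(y_i - y_j)^2$.

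Finally I would split each weight as $K_{ij}K_{ji} = (K_{ij}\pi_j)\cdot\frac{K_{ji}}{\pi_j}$. The factor $K_{ij}\pi_j = -L_{ij}$ reproduces exactly the weights of the Laplacian quadratic form, so applying \Cref{lem:laplacian-quadratic} with $a = u$ and using $\pipr{u,Ku} = \lambda\pinorm{u}^2 = \lambda$ yields $\sum_{i\ne j}K_{ij}\pi_j(y_i - y_j)^2 = 2\abs{\lambda}$. The remaining factor $\frac{K_{ji}}{\pi_j} = (\Pi^{-1}K)_{ji}$ is an off-diagonal entry of $\Pi^{-1}K$, hence at most $\norm{\Pi^{-1}K}_{\infty,\mathrm{off}}$; pulling this bound out of the sum gives $\tr\Delta^*\Delta \le \norm{\Pi^{-1}K}_{\infty,\mathrm{off}}\sum_{i\ne j}K_{ij}\pi_j(y_i-y_j)^2 = 2\abs{\lambda}\norm{\Pi^{-1}K}_{\infty,\mathrm{off}}$, as required. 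The step needing the most care is the detailed-balance symmetrization inside the trace: it is precisely this identity that aligns the otherwise asymmetric weight with the quadratic form of \Cref{lem:laplacian-quadratic}, and tracking the $\pi_i/\pi_j$ factors correctly through the adjoint expansion is the one place where a reciprocal or sign slip could spoil the estimate.
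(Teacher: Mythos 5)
Your proposal is correct and follows essentially the same route as the paper's proof: bound $\pinorm{\Delta}^2$ by $\tr \Delta^*\Delta$ via \Cref{lem:spectral-frobenius}, expand the trace entrywise, pull out the maximal off-diagonal entry of $\Pi^{-1}K$, and identify the remaining sum with $-2\pipr{u,Ku} = 2\abs{\lambda}$ via \Cref{lem:laplacian-quadratic}. The only (cosmetic) difference is bookkeeping: the paper uses the skew-adjointness $\Delta^* = -\Delta$ to write $\tr\Delta^*\Delta = -\tr\Delta^2$ and routes detailed balance through the symmetry of $L = -K\Pi$, whereas you expand $\tr\Delta^*\Delta$ directly from $\Delta^* = \Pi\trsp{\Delta}\Pi^{-1}$ and invoke the detailed balance condition explicitly--which, if anything, keeps the signs cleaner.
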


\begin{proof}
  By \Cref{lem:spectral-frobenius} and $-\Delta^* = \Delta$, it holds $\pinorm{\Delta}^2 \le \tr \Delta^*\Delta = -\tr \Delta^2$.
  The $(i, j)$ entry in $\Delta$ is
  \begin{align}
    \Delta_{ij}
  = K_{ij}\prn{\frac{u_j}{\pi_j} - \frac{u_i}{\pi_i}}
  = \frac{L_{ij}}{\pi_j}\prn{\frac{u_i}{\pi_i} - \frac{u_j}{\pi_j}}.
  \end{align}
  We thus have
  \begin{align}
    \pinorm{\Delta}^2
    &= -\sum_{i,j} \Delta_{ij} \Delta_{ji}
    = \sum_{i \ne j} \frac{L_{ij}^2}{\pi_i \pi_j} \prn{\frac{u_i}{\pi_i} - \frac{u_j}{\pi_j}}^2 \\
    &\le \max_{i \ne j} \frac{L_{ij}}{\pi_i \pi_j} \sum_{i \ne j} L_{ij} \prn{\frac{u_i}{\pi_i} - \frac{u_j}{\pi_j}}^2
    = -2\norm[\big]{\Pi^{-1}K}_{\infty,\mathrm{off}}\pipr{u, Ku},
  \end{align}
  where \Cref{lem:laplacian-quadratic} is used in the last equality.
  Since $\pipr{u, Ku} = \lambda$, we obtain the desired inequality.
\end{proof}

\section{Implementation Details of RCMC Method}\label{sec:improved-rcmc-method}
In this section, we discuss the implementation details of the RCMC method for fast and stable computation.

\subsection{Incremental Update of LU Decomposition}\label{sec:incremantally-updating-lu-decomposition}

In the calculation of the matrix-vector multiplication $Vp$, one needs to solve linear systems with the coefficient matrix $K_{SS} = K_{S^{(k)}S^{(k)}}$, which costs $\Order(k^3)$ operations if an LU decomposition of $K_{SS}$ is computed from scratch.
Fortunately, we can incrementally update an LU decomposition of $K_{SS}$ for successive $k$ in the following way.

We first discuss the LU decomposition of a self-adjoint positive semi-definite matrix $A \in \R^{n \times n}$ in terms of the $\pi$-norm metric.
The matrix $B \coloneqq A\Pi^{-1}$ is a symmetric positive semi-definite matrix.
Consider a Cholesky decomposition $P C C^\top P^\top$ of $B$, where $C \in \R^{n \times n}$ is a lower-triangular matrix and $P \in \R^{n \times n}$ is a permutation matrix representing the order of pivoting.
Then, $C\prn[\big]{C^\top\Pi^{-1}}$ is an LU decomposition of $P^\top AP$.
We regard the row and column spaces of $C$ as the linear spaces equipped with the metrics $P^\top \Pi^{-1} P = {\diag(P\pi)}^{-1}$ and $I_n$, respectively.
Then, the adjoint matrix of $C$ is given by $C^* = C^\top \Pi^{-1}$ and $CC^* = P^\top AP$ holds.
We call this decomposition a \emph{Cholesky decomposition} of $A$ with respect to the $\pi$-norm.
The matrix $C$ is called a \emph{Cholesky factor} of $A$.

Throughout the algorithm, we maintain an $n \times k$ matrix $C^{(k)} \in \R^{n \times S^{(k)}}$.
Initially, $C^{(0)}$ is set as the $n \times 0$ matrix.
At the $k$th iteration, we update $C^{(k-1)}$ to $C^{(k)}$ by appending a column vector as
\begin{align}\label{eq:update-cholesky}
  C^{(k)} \defeq \begin{pNiceArray}{c|c}[first-row,last-col]
    S^{(k-1)} & j^{(k)} \\
    \Block{2-1}{C^{(k-1)}} & \zeros & S^{(k-1)} \\
                           & -\sqrt{\frac{\pi_{j^{(k)}}}{\abs[\big]{D^{(k-1)}_{j^{(k)}j^{(k)}}}}}D^{(k-1)}_{T^{(k-1)}j^{(k)}} & T^{(k-1)}
  \end{pNiceArray}.
\end{align}

\begin{proposition}\label{prop:incremental-update-cholesky}
  The matrix $C^{(k)}$ given by~\eqref{eq:update-cholesky} is equal to $C_{[n]S^{(k)}}$ for some Cholesky factor $C$ of $-K$.
\end{proposition}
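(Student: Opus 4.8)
The plan is to reduce the claim to the ordinary (symmetric) Cholesky factorization of the graph Laplacian $L \defeq -K\Pi$ and then to recognize the update \eqref{eq:update-cholesky} as a single step of column‑wise Cholesky elimination performed in the pivot order $j^{(1)}, j^{(2)}, \dotsc$ selected by the algorithm. Recall from \Cref{sec:rate-constant-matrices} that $L = -K\Pi$ is symmetric positive semi-definite. Since a $\pi$-norm Cholesky factor $C$ of $-K$ satisfies $CC^* = -K$ with $C^* = \trsp{C}\Pi^{-1}$, requiring this is equivalent to requiring $C\trsp{C} = -K\Pi = L$; that is, a $\pi$-norm Cholesky factor of $-K$ is exactly an ordinary lower‑triangular (after pivoting) Cholesky factor of $L$. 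Thus it suffices to show that the columns produced by \eqref{eq:update-cholesky} are the Cholesky columns of $L$ taken in the order $j^{(1)}, j^{(2)}, \dotsc$.

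First I would record the bookkeeping identity relating the Schur complements of $K$ and $L$. By a direct block computation using $L_{IJ} = -K_{IJ}\Pi_J$, the Schur complement of $L_{S^{(k)}S^{(k)}}$ in $L$ equals $-D^{(k)}\Pi_{T^{(k)}}$, where $D^{(k)}$ is the Schur complement of $K_{S^{(k)}S^{(k)}}$ in $K$ maintained by the algorithm; this matrix is symmetric, being the graph Laplacian of the rate constant matrix $D^{(k)}$ (whose stationary distribution is proportional to $\pi_{T^{(k)}}$). I would then argue by induction on $k$, with the hypothesis that $C^{(k)}$ consists of the first $k$ Cholesky columns of $L$ in the chosen pivot order and that the trailing Schur complement equals $-D^{(k)}\Pi_{T^{(k)}}$.

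For the inductive step, recall the recursive description of Cholesky factorization: having eliminated the pivots in $S^{(k-1)}$, the next column (for pivot $j^{(k)}$) of the factor of the symmetric matrix $\hat{L}^{(k-1)} \defeq -D^{(k-1)}\Pi_{T^{(k-1)}}$ is the $j^{(k)}$-th column of $\hat{L}^{(k-1)}$ divided by the square root of its pivot $\hat{L}^{(k-1)}_{j^{(k)}j^{(k)}}$, and the trailing block is downdated by the corresponding rank‑one term. Using $\hat{L}^{(k-1)}_{ij} = -D^{(k-1)}_{ij}\pi_{j}$ and $\hat{L}^{(k-1)}_{j^{(k)}j^{(k)}} = \abs[\big]{D^{(k-1)}_{j^{(k)}j^{(k)}}}\pi_{j^{(k)}}$, the $\sqrt{\pi_{j^{(k)}}}$ factors combine to yield exactly the column appended in \eqref{eq:update-cholesky}, with zeros on the rows indexed by $S^{(k-1)}$. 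A short computation likewise shows that the rank‑one downdate of $\hat{L}^{(k-1)}$ is $-D^{(k)}\Pi_{T^{(k)}}$ with $D^{(k)}$ given by \cref{line:improved-D}, closing the induction; completing the Cholesky factorization of the remaining Schur complement in any order then produces a full factor $C$ of $-K$ with $C_{[n]S^{(k)}} = C^{(k)}$.

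The routine but error‑prone part is tracking the diagonal metric $\Pi$ throughout, in particular the single factor $\sqrt{\pi_{j^{(k)}}}$ that distinguishes the $\pi$-norm Cholesky of $-K$ from the plain Cholesky of $K$, and confirming the sign conventions: the diagonal of the rate constant matrix $D^{(k-1)}$ is non-positive, so $\abs[\big]{D^{(k-1)}_{j^{(k)}j^{(k)}}} = -D^{(k-1)}_{j^{(k)}j^{(k)}}$ turns the column into a genuine Cholesky column with a positive diagonal pivot. This positivity of the pivot is exactly what makes the Cholesky step well defined, and it is the only place where nonsingularity of $K_{S^{(k)}S^{(k)}}$ is used.
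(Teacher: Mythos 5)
Your proposal is correct and follows essentially the same route as the paper's proof: both reduce the $\pi$-norm Cholesky factorization of $-K$ to the ordinary symmetric Cholesky factorization of the Laplacian $L = -K\Pi$, identify the Schur complement of $L_{S^{(k)}S^{(k)}}$ in $L$ with $-D^{(k)}\Pi_{T^{(k)}}$, and recognize the appended column in~\eqref{eq:update-cholesky} as a pivoted column-Cholesky step. The only difference is one of detail: you carry out the induction and the rank-one downdate verification explicitly, whereas the paper compresses these into a citation of the standard outer-product Cholesky procedure in Higham's book.
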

\begin{proof}
  Let $Q^{(k)} \defeq -D^{(k)}\Pi_{T^{(k)}} = L_{T^{(k)}T^{(k)}} - L_{T^{(k)}S^{(k)}}L^{-1}_{S^{(k)}S^{(k)}}L_{S^{(k)}T^{(k)}}$ be the Schur complement of $L_{S^{(k)}S^{(k)}}$ in $L = -K\Pi$.
  Then, we can write~\eqref{eq:update-cholesky} as
  \begin{align}\label{eq:update-cholesky-L}
    C^{(k)} = \begin{pNiceArray}{c|c}
      \Block{2-1}{C^{(k-1)}} & \zeros \\
                            & \frac{1}{\sqrt{Q^{(k-1)}_{j^{(k)}j^{(k)}}}} Q^{(k-1)}_{T^{(k-1)}j^{(k)}}
    \end{pNiceArray}.
  \end{align}
  The formula~\eqref{eq:update-cholesky-L} is identical to the procedure of computing a Cholesky decomposition $CC^\top$ of $L$ with pivoting order $j^{(1)}, j^{(2)}, \dotsc$; see, e.g.,~\cite[Chapter~10]{Higham2002-nv}.
  Thus, it holds that $C^{(k)} = C_{[n]S^{(k)}}$.
\end{proof}

By \Cref{prop:incremental-update-cholesky}, $C^{(k)}_{S^{(k)}S^{(k)}}$ provides a Cholesky factor of $-K_{S^{(k)}S^{(k)}}$.

The RCMC method of Type~B additionally involves solving a linear system with the coefficient matrix $M$ defined by~\eqref{def:M}.
In \Cref{sec:incremental-update-M}, we describe an efficient algorithm for incrementally updating a Cholesky factor of $M$.

\subsection{Numerical Stabilization}\label{sec:numerical-stabilization}
A rate constant matrix $K$ arising from chemical kinetics contains a wide range of numbers in its entries, which may lead to catastrophic cancellation when two like-sign numbers are subtracted in finite precision arithmetic~\cite{Higham2002-nv}.
Therefore, designing an algorithm that avoids subtracting like-sign numbers is crucial to numerical stability.

Fortunately, most matrices appearing in \Cref{alg:improved} of Type~A have constant sign patterns.
For instance, $K_{ST}$, $K_{TS}$, and $V_{TT} = \diag\prn[\big]{\onestr_T M}^{-1}$ are non-negative matrices for any $S = S^{(k)}$ and $T = T^{(k)}$.
The Cholesky factor $C$ of $-K$ treated in \Cref{sec:incremantally-updating-lu-decomposition} is a lower-triangular matrix whose diagonals and off-diagonals are non-negative and non-positive, respectively, by~\eqref{eq:update-cholesky} and the fact that $D = D^{(k)}$ is a rate constant matrix.
Thus, we have $C_{SS}^{-1} \ge O$ from the inversion formula of triangular matrices.
As a result, for any $p \ge \zeros$, the matrix-vector multiplication $Vp$ on Type~A is free from subtractions of like-signed numbers.

Unfortunately, the same property does not apply to the matrix-vector multiplication in Type~B due to the absence of the non-negativity of $M^{-1}$.
Nevertheless, in our experiments detailed in \Cref{sec:experiments}, we have not encountered any numerical instability.
Theoretically analyzing the numerical stability for the TypeB RCMC method is left as future work.

Apart from the matrix-vector multiplication, \Cref{alg:improved} performs arithmetic operations during the update of $D$ at \Cref{line:improved-D}.
Unlike the off-diagonal updates, the following diagonal update in \Cref{line:improved-D} is the subtraction of two non-positive numbers:
\begin{align}\label{eq:unstable-diagonal}
  D_{ii} \defeq D^{(k-1)}_{ii} - \frac{D_{ij}^{(k-1)}D_{ji}^{(k-1)}}{D_{jj}^{(k-1)}}
  \qquad \prn[\big]{i \in T},
\end{align}
where $j = j^{(k)}$.
Indeed, due to $\onestr_{T} D = \zerostr$, we can compute $D_{ii}$ from off-diagonals of $D$ as
\begin{align}\label{eq:stable-diagonal}
  D_{ii} = -\sum_{k \in T \setminus \set{i}} D_{ki}
  \qquad \prn[\big]{i \in T}.
\end{align}
Diagonal entries in $D$ should be updated via~\eqref{eq:stable-diagonal} in place of~\eqref{eq:unstable-diagonal} from the viewpoint of numerical stability.

\subsection{Faster Reference Time Computation}\label{sec:time-computation}
Computing $t_{\mathrm{eigen}}^{(k)}$ given by~\eqref{def:t-k-eig} costs longer time than other operations in \cref{alg:improved}, since it involves the computation of the spectral radii of $D$ and $K_{SS}^{-1}$.
When the running time is a significant concern, for a square matrix $A$, one can replace $\rho(A)$ and $\sigma(A)$ with
\begin{align}\label{def:rho-hat}
  \hat{\rho}(A) \coloneqq \min\set{
    \max_{i} \set{\sum_{j} \abs[\big]{A_{ij}}},
    \max_{j} \set{\sum_{i} \abs[\big]{A_{ij}}}
  }
  \quad \text{and} \quad
  \hat{\sigma}(A) \coloneqq \frac{1}{
    \hat{\rho}(A^{-1})
  },
\end{align}
respectively.
The value $\hat{\rho}(A)$ is an upper bound on $\rho(A)$ derived by the Gershgorin theorem.
Since $K_{SS}$ (with $S = S^{(k)}$) is an M-matrix, we further have
\begin{align}
  \hat{\rho}\prn[\big]{K_{SS}^{-1}} = \min\set{\max_{i \in S} \prn[\big]{K_{SS}^{-1}\ones_S}_i, \max_{j \in S} \prn[\big]{\onestr_S K_{SS}^{-1}}_j},
\end{align}
which can be computed by solving two linear systems with the coefficient matrix $K_{SS}$.

\section{Experiments}\label{sec:experiments}

In this section, we report numerical experiments of the RCMC method using synthetic and real-world datasets.
All experiments were conducted using \CC\ and \texttt{Eigen}\footnote{\texttt{\url{https://eigen.tuxfamily.org/}}} 3.4.0 compiled by \texttt{Apple Clang} 14.0.3 on a laptop powered by an Apple M2 CPU (8 Cores) with \SI{24}{GB} of RAM.

\subsection{Datasets and Settings}
\setlength{\arraycolsep}{2pt}

We employ three datasets: one small-scale synthetic and two large-scale real-world data.
The synthetic data consists of the following rate constant matrix $K \in \R^{6 \times 6}$ (expressed with two significant digits):
\[
  \prn{\begin{array}{rrrrrr}
    -2.0 \times 10^{9\phantom{-}} & 6.9 \times 10^{9\phantom{1}} & 0\phantom{{}\times 10^{11}} & 1.7 \times 10^{-18}         & 0\phantom{{}\times 10^{-9}}    & 0\phantom{{}\times 10^{-21}} \\
    2.0 \times 10^{9\phantom{-}}  & -3.9 \times 10^{11}          & 3.3 \times 10^{11}         & 0\phantom{{}\times 10^{-18}} & 1.9 \times 10^{7\phantom{-}}  & 0\phantom{{}\times 10^{-21}} \\
    0\phantom{{}\times 10^{-7}}    & 3.9 \times 10^{11}           & -3.3 \times 10^{11}        & 0\phantom{{}\times 10^{-18}} & 0\phantom{{}\times 10^{-9}}    & 0\phantom{{}\times 10^{-21}} \\
    9.6 \times 10^{-7}            & 0\phantom{{}\times 10^{11}}   & 0\phantom{{}\times 10^{11}} & -1.7 \times 10^{-18}        & 0\phantom{{}\times 10^{-9}}    & 0\phantom{{}\times 10^{-21}} \\
    0\phantom{{}\times 10^{-7}}    & 3.4 \times 10^{7\phantom{1}} & 0\phantom{{}\times 10^{11}} & 0\phantom{{}\times 10^{-18}} & -1.9 \times 10^{7\phantom{-}} & 8.1 \times 10^{-21} \\
    0\phantom{{}\times 10^{-7}}    & 0\phantom{{}\times 10^{11}}   & 0\phantom{{}\times 10^{11}} & 0\phantom{{}\times 10^{-18}} & 8.3 \times 10^{-9}            & -8.1 \times 10^{-21} \\
  \end{array}}
\]
with $\pi = {(9.1 \times 10^{-13},\, 2.6 \times 10^{-13},\, 3.1 \times 10^{-13},\, 0.51,\, 4.8 \times 10^{-13},\, 0.49)}^\top \in \Delta_6^\circ$.

\begin{table}
  \centering
  \caption{Properties of rate constant matrices $K \in \R^{n \times n}$, where $\nnz(K)$ is the number of non-zero entries in $K$.}\label{tab:datasets}
  \begin{tabular}{lrrrr}\toprule
    \multicolumn{1}{c}{Data} & \multicolumn{1}{c}{$n$} & $\nnz(K)$ & \multicolumn{1}{c}{$\min_j \abs{K_{jj}}$} & \multicolumn{1}{c}{$\max_j \abs{K_{jj}}$} \\\midrule
    synthetic &       6 &       16 & $8.1 \times 10^{-21\phantom{0}}$  & $3.9 \times 10^{11}$ \\
    DFG       & 1{,}745 &  9{,}677 & $4.3 \times 10^{-159}$ & $3.8 \times 10^{14}$ \\
    WL        & 1{,}759 & 10{,}485 & $1.2 \times 10^{-144}$ & $1.6 \times 10^{16}$ \\\bottomrule
  \end{tabular}
\end{table}

For real-world datasets, we use two chemical reaction networks fetched from the Searching Chemical Action and Network (SCAN) platform~\cite{Kuwahara2023-js}.
The two datasets, denoted by DFG and WL, represent the synthesis of fluoroglycine and Wöhler's urea synthesis, respectively.
Rate constant matrices $K$ are calculated from the free energies of equilibrium and transient states, assuming the canonical ensemble~\eqref{eq:canonical} with $T = \SI{300}{K}$ and $\Gamma = 1$.
We truncate off-diagonal entries $K_{ij}$ and $K_{ji}$ to zero if $\frac{K_{ij}}{\pi_j} = \frac{K_{ji}}{\pi_i} < 10^{-200}$ and eliminate all-zero rows and columns from $K$.
Properties of the resulting rate constant matrices are summarized in \Cref{tab:datasets}.

We execute the RCMC method (\Cref{alg:improved}) of Types~A and~B to compute approximate solutions $q^{(k)} \approx x\prn[\big]{t^{(k)}}$ for $k = 0, \dotsc, n$.
The initial vector $p$ is set to $e_1 = {(1, 0, \dotsc, 0)}^\top$.
Based on the discussion in \Cref{sec:time-computation}, we determine the reference time $t^{(k)}$ in the following three ways:
\begin{itemize}
  \item ``diag'': $t^{(k)} = t_\mathrm{diag}^{(k)}$ defined by~\eqref{def:t-k-diag},
  \item ``eigen'': $t^{(k)} = t_\mathrm{eigen}^{(k)}$ defined by~\eqref{def:t-k-eig},
  \item ``gershgorin'': $t^{(k)} = t_\mathrm{gershgorin}^{(k)} \defeq \ln 2 / \sqrt{\hat{\sigma}\prn[\big]{K_{S^{(k)}S^{(k)}}} \hat{\rho}\prn[\big]{D^{(k)}}}$.
\end{itemize}
In ``eigen,'' the spectral radii are computed by the Lanczos method\footnote{%
  More precisely, for a self-adjoint matrix $A$, we apply the Lanczos method to the ``symmetrized'' matrix $B \coloneqq \Pi^{-\frac12} A \Pi^{\frac12}$ and retrive the eigendecomposition of $A$ from that of $B$; see \Cref{sec:adjointness}.
} using \texttt{Spectra}\footnote{
  \texttt{\url{https://spectralib.org/}}
} version 1.0.1.
Incremental updates for LU decompositions of $K_{SS}$ and $M$ (only for Type~B) given in \Cref{sec:incremantally-updating-lu-decomposition} and \Cref{sec:incremental-update-M}, respectively, are employed.
Matrix operations are performed in sparse representation.

Finally, we evaluate the error~\eqref{def:err}, which we call the \emph{$\pi$-norm error}, for every $k$ and compare them with the upper bounds given in \cref{thm:error-main}.
The eigendecomposition of rate constant matrices, used for calculating error bounds and the exact solution $x\prn{t^{(k)}}$, is pre-computed by the Lanczos method with 200-digit precision floating-point numbers via the \texttt{GNU MPFR} library\footnote{
  \texttt{\url{https://www.mpfr.org/}}
} of version 4.2.0.

\subsection{Results}
\begin{table}[tbp]
  \centering
  \caption{Running time (in seconds) of the RCMC method.}\label{tbl:runnning-time}
  \begin{tabular}{lrrrrrr}\toprule
     & \multicolumn{2}{c}{diag}                      & \multicolumn{2}{c}{eigen}                     & \multicolumn{2}{c}{gershgorin} \\\cmidrule(lr){2-3} \cmidrule(lr){4-5} \cmidrule(lr){6-7}
     & \multicolumn{1}{c}{Type~A} & \multicolumn{1}{c}{Type~B} & \multicolumn{1}{c}{Type~A} & \multicolumn{1}{c}{Type~B} & \multicolumn{1}{c}{Type~A} & \multicolumn{1}{c}{Type~B} \\\midrule
  synthetic & $< \SI{1}{ms}$ & $< \SI{1}{ms}$ & $< \SI{1}{ms}$ & $< \SI{1}{ms}$ & $< \SI{1}{ms}$ & $< \SI{1}{ms}$ \\
  DFG       & 1.28 & 2.59 & 72.24 & 71.48 & 1.85 & 3.08 \\
  WL        & 1.58 & 2.86 & 85.94 & 86.25 & 2.21 & 3.39 \\\bottomrule
  \end{tabular}
\end{table}

\begin{figure}[tbp]
  \centering
  \includegraphics[width=\linewidth]{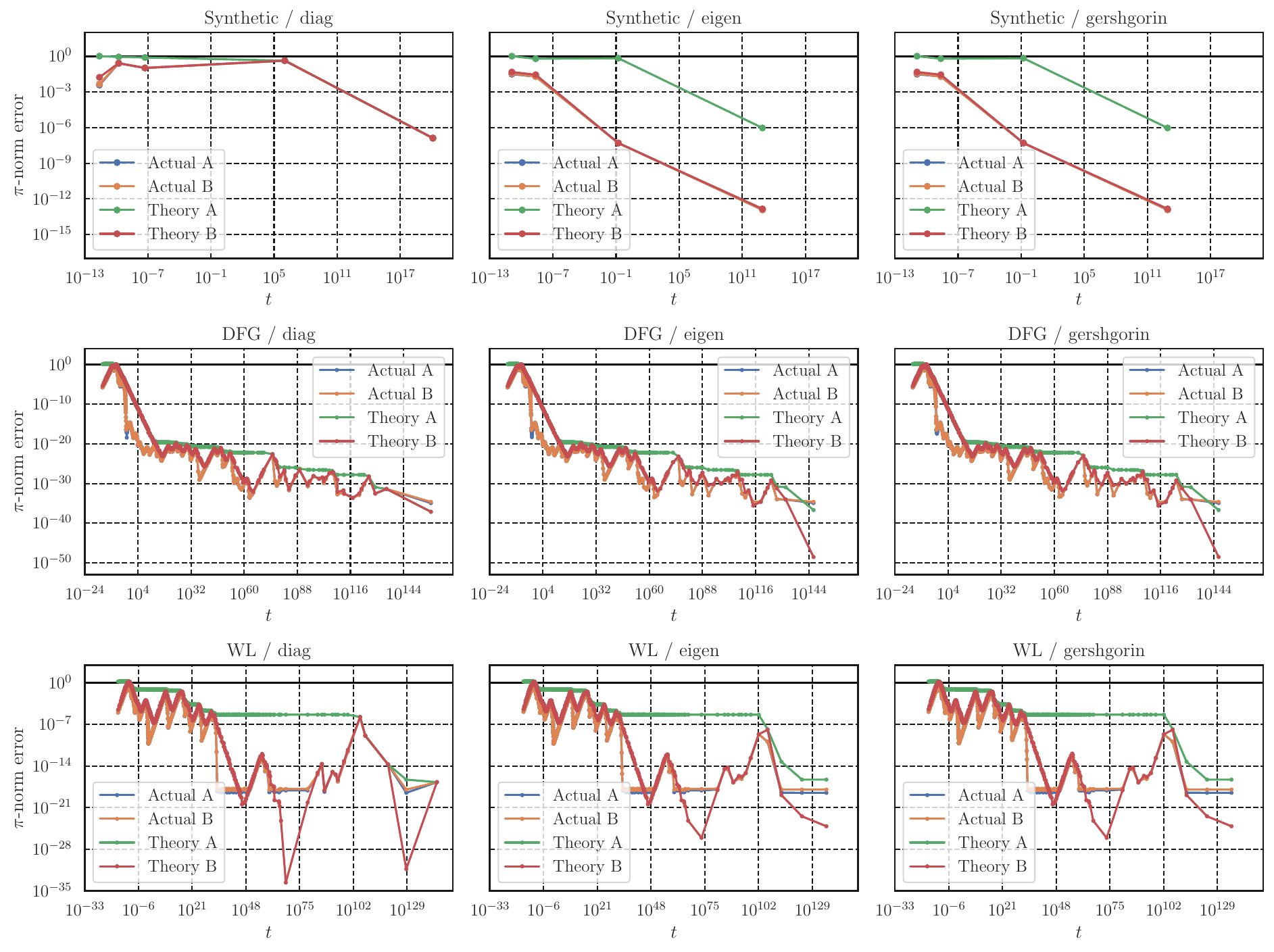}
  \caption{%
    Log-log plots of the actual $\pi$-norm errors in approximate solutions (``Actual A/B'') and their theoretical upper bounds (``Theory A/B''), where A and B indicate the types of the RCMC method.
    The errors are plotted as points $\prn[\big]{t^{(k)}, \Err_{S^{(k)}, t^{(k)}}(p)}$ for each $k$.
    Points with $t^{(k)} = 0$ or $+\infty$ are omitted.
    Actual A and B overlap closely in all plots.
    In the plots for synthetic data, Theory B also aligns closely with Actual A and B.
  }\label{fig:error}
\end{figure}

\Cref{tbl:runnning-time} shows the running time of the RCMC method.
Across all combinations of data, type, and the reference time computation method, the RCMC method ran within 100 seconds.
Notably, ``diag'' and ``gershgorin'' required at most only 4 seconds.

\Cref{fig:error} illustrates the comparison of the actual $\pi$-norm errors and their theoretical upper bounds.
For the synthetic data, the theoretical bounds correctly served as upper bounds on actual errors for all $k$.
For DFG and WL, this was true only for $k$ such that the theoretical bounds for Type~B are larger than $10^{-35}$ and $10^{-18}$, respectively.
This is because the RCMC method is performed with double-precision floating-point numbers (about 15-digit precision); we have confirmed that the errors fell below the theoretical bounds when recalculated with 50-digit precision.
Nevertheless, the approximate solutions for such $k$ computed with double precision align with the exact solutions for the first 12 to 14 digits.
Thus, double precision is sufficient for most applications.
It is worth mentioning that the double precision was insufficient for the eigendecomposition of $K$, even when applied to the synthetic data, as the matrix is extremely ill-conditioned.
This fact highlights an advantage of the RCMC method in terms of numerical stability.

\paragraph{Type~A versus~Type~B.}
In all datasets, the output of the RCMC method of Types~A and~B were almost identical. Notably, the errors of approximate solutions of Type~A were consistently lower than the theoretical bounds not only of Type~A but also of Type~B.
Regarding the computational time, Type~B applied to DFG and WL additionally took about 1 second to perform the incremental update of an LU decomposition of $M$.
Thus, in practice, employing Type~A is sufficient from the viewpoint of running time.

\begin{figure}[tbp]
  \centering
  \includegraphics[width=\linewidth]{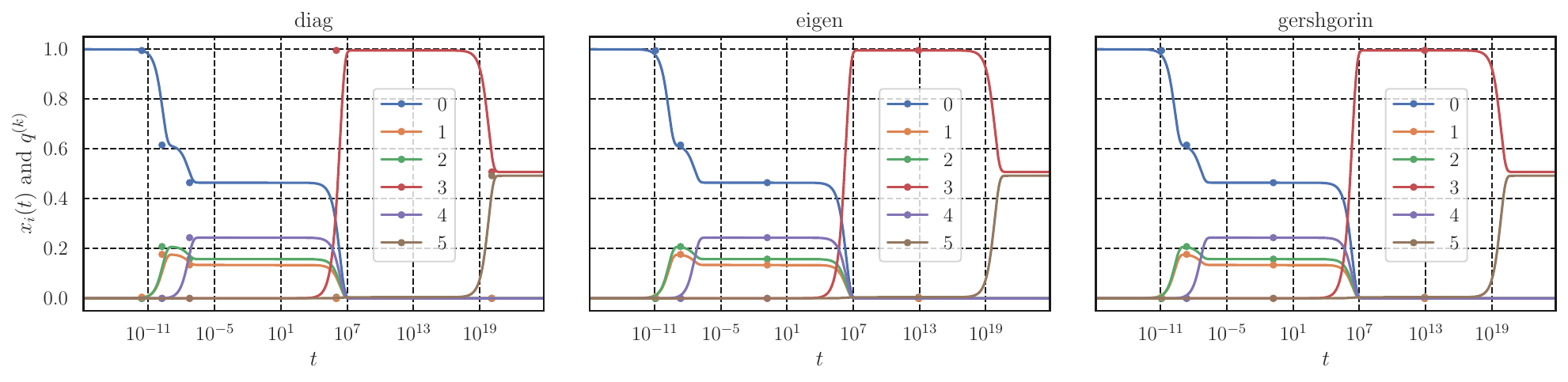}
  \caption{%
    Exact solutions (solid curves) and approximate solutions (points) for the synthetic data.
    Approximate solutions of both Types~A and~B give visually indistinguishable plots.
  }\label{fig:synthetic-pop}
\end{figure}

\paragraph{Comparison of reference time computation methods.}
The output of ``eigen'' and ``gershgorin'' are nearly identical in all datasets.
Thus, given the computational time shown in \Cref{tbl:runnning-time}, we recommend the use of ``gershgorin''.
As for ``diag,'' it maintains a relatively small error for DFG and WL but exhibits larger errors to synthetic data.
For a detailed analysis, we show plots of the exact and approximate solutions for the synthetic data in \cref{fig:synthetic-pop}.
In ``diag,'' $t^{(k)}$ are picked right after significant changes in the exact solution.
In contrast, both ``eigen'' and ``gershgorin'' choose the midpoint (in terms of the logarithmic scale) of periods where the exact solution is nearly constant.
Consequently, these methods yield smaller errors between the approximate and exact solutions.

\paragraph{$L_\infty$-norm error.}

\begin{figure}[tbp]
  \centering
  \includegraphics[width=\linewidth]{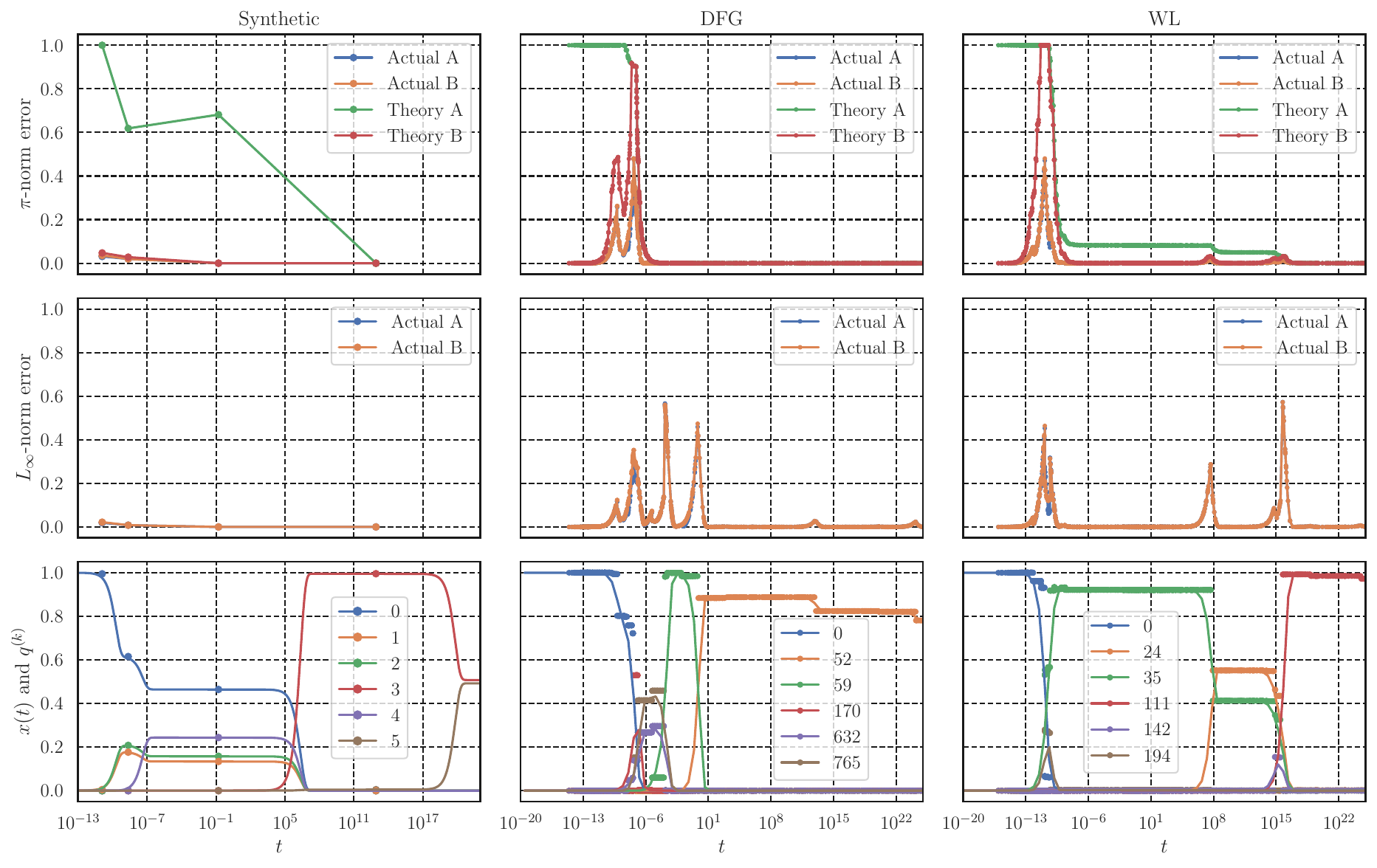}
  \caption{%
    The top and middle rows: log-linear plots of the $\pi$- and $L_\infty$-norm errors, respectively.
    ``Actual A/B'' means the actual errors in approximate solutions, whereas ``Theory A/B'' in the top row indicates the theoretical upper bounds.
    ``A'' and ``B'' refer to the types of the RCMC method.
    Actual A and B overlap closely in all error plots.
    The bottom row: comparison of exact solutions (solid curves) and the approximate solutions (points) computed by the RCMC method of Type~B.
    Only the top six curves are plotted for clarity.
    In all plots, the times $t^{(k)}$ are computed by ``eigen''.
  }\label{fig:linf-error}
\end{figure}

While the $\pi$-norm error~\eqref{def:err} is tractable from the viewpoint of error analysis, it is not commonly used in general contexts.
We illustrate in the top and middle rows of \Cref{fig:linf-error} the log-linear plots of the $\pi$-norm errors and the \emph{$L_\infty$-norm errors}, defined to be $\norm[\big]{q^{(k)} - x(t^{(k)})}_\infty$, respectively, focusing on shorter time intervals (with $t^{(k)}$ computed by the ``eigen'').
Although the $\pi$- and $L_\infty$-norm errors typically follow analogous trajectories in the plots, there are moments when the $\pi$-norm error is small but the $L_\infty$-norm error is large.
To capture the time when a large $L_\infty$-norm error occurs, we plot the approximate and exact solutions in the bottom row of \cref{fig:linf-error}.
For DFG and WL, wherein the RCMC method yields dense outputs of approximate solutions, the dynamics of approximate solutions resemble step functions, while the exact solutions form smooth curves.
This discrepancy leads to a significant error amplification when the exact solution undergoes substantial fluctuations.
Since the RCMC method is based on QSSA, approximating solutions accurately at such transient timings is difficult.
Nonetheless, owing to its capability to perform calculations with notable speed and numerical stability, it is deemed suitable for applications that require fast computations of rough approximations of the solutions to master equations, like the kinetics-based navigation in automatic chemical reaction pathway search techniques~\cite{Ohno2004-iw,Sumiya2020-ti}.

\section{Conclusion}\label{sec:conclusion}
In this paper, we have explored mathematical aspects of the RCMC method.
We have reformulated the RCMC method in terms of matrix computation and conducted theoretical error analysis.
Building on this foundation, we have also discussed its efficient and numerically stable implementations.
Numerical experiments with both synthetic and real-world datasets have demonstrated computational efficiency and numerical stability, and validated the derived error bounds.
Potential future work includes extending the RCMC method to nonlinear master equations, such as those involving bimolecular reactions.

\section*{Acknowledgments}
The authors thank Satoshi Maeda, Yu Harabuchi, and Wataru Matsuoka of Hokkaido University for fruitful discussions.
This work was supported by JST ERATO Grant Number JPMJER1903.

\printbibliography[heading=bibintoc]

\newpage
\appendix
\section*{Appendix}
\section{Vector Spaces with Diagonal Metric}\label{sec:metrix-vector-spaces}
This section summarizes basic results on the real vector space $\R^n$ equipped with the $\pi$-inner product~\eqref{def:pi-inner-product}, where $\pi = {(\pi_i)}_{i \in [n]} \in \Rpp^n$.
We let $\Pi \defeq \diag(\pi)$.

\subsection{Adjointness}\label{sec:adjointness}
The \emph{adjoint matrix} of $A \in \R^{n \times n}$ is the unique matrix $A^* \in \R^{n \times n}$ such that $\pipr{a, Ab} = \pipr{A^*a, b}$ holds for all $a, b \in \R^n$.
The adjoint matrix $A^*$ is explicitly written as $A^* = \Pi \trsp{A} \Pi^{-1}$ by
\begin{align}
  \pipr{a, Ab} = \trsp{a}\Pi^{-1}Ab = \trsp{\prn[\big]{\Pi\trsp{A}\Pi^{-1}a}}\Pi^{-1}b = \piprbig{\Pi\trsp{A}\Pi^{-1}a, b}.
\end{align}
It holds that ${(A_1 A_2)}^* = A_2^* A_1^*$ for $A_1, A_2 \in \R^{n \times n}$ and $\prn{A^{-1}}^* = \prn{A^*}^{-1}$ for nonsingular $A \in \R^{n \times n}$.

A matrix $A \in \R^{n \times n}$ is called \emph{self-adjoint} if $A^* = A$, i.e., $A\Pi = \Pi\trsp{A}$.
If $A$ is self-adjoint, $B \defeq \Pi^{-\frac12} A \Pi^{\frac12}$ is a symmetric matrix.
Here, for a non-negative diagonal matrix $D = \diag\prn{d_1, \dotsc, d_n} \ge O$, we let $D^{\frac12} \defeq \diag\prn{\sqrt{d_1}, \dotsc, \sqrt{d_n}}$.
Let $B = V \Lambda \trsp{V}$ be an eigendecomposition of $B$, where $V$ is an orthogonal matrix (i.e.\ $VV^\top = V^\top V = I_n$) and $\Lambda = \diag(\lambda_1, \dotsc, \lambda_n)$ is a diagonal matrix.
Letting $U \defeq \Pi^{\frac12}V$, we have $A = \Pi^{\frac12}V\Lambda \trsp{V}\Pi^{-\frac12} = U \Lambda U^{-1}$.
Thus, $A$ and $B$ have the same eigenvalues.
In particular, eigenvalues of self-adjoint matrices are real.
See also~\cite[Chapter~IV]{Dodson1991-ic}.

Let $u_1, \dotsc u_n$ be the column vectors of $U$, i.e., $u_k$ is an eigenvector of $A$ corresponding to the eigenvalue $\lambda_k$.
The matrix $U$ satisfies $\trsp{U}\Pi^{-1}U = \trsp{V}V = I_n$, which means that $\pipr{u_k, u_l}$ is $1$ if $k = l$ and $0$ otherwise for $k \in [n]$.
Namely, adjoint matrices have orthonormal eigenbases.
If we expand $a \in \R^n$ with respect to the eigenbasis as $a = \sum_{k=1}^n c_k u_k$ with $c_1, \dotsc, c_n \in \R$, for any $l \in [n]$, it holds that
\begin{align}
  \pipr{u_l, a} = \sum_{k=1}^n c_k \pipr{u_l, u_k} = c_l,
\end{align}
and hence we have $a = \sum_{k=1}^n \pipr{u_k, a} u_k$.

\subsection{Positive Semi-definiteness}\label{sec:positive-semi-definiteness}
A self-adjoint matrix $A \in \R^{n \times n}$ is called \emph{positive semi-definite} if $\pipr{a, Aa} \ge 0$ for $a \in \R^n$ and \emph{positive definite} if $\pipr{a, Aa} > 0$ for $a \in \R^n \setminus \set{\zeros}$.
Similarly, $A$ is called \emph{negative semi-definite} if $-A$ is positive semi-definite and \emph{negative definite} if $-A$ is positive definite.
The definiteness are equivalent to those of $B \defeq \Pi^{-\frac12} A \Pi^{\frac12}$ by
\begin{align}\label{eq:derive-adjoint}
  \pipr{a, Aa} = \trsp{a}\Pi^{-1}Aa = \trsp{a}\Pi^{-\frac12} B \Pi^{-\frac12}a = \trsp{b}Bb,
\end{align}
where $b = \Pi^{-\frac12}a$.
Since $A$ and $B$ have the same eigenvalues, $A$ is positive (resp.\ negative) semi-definite if and only if its eigenvalues are non-negative (resp.\ non-positive) and is positive (resp.\ negative) definite if and only if its eigenvalues are positive (resp.\ negative).

Let $A \in \R^{n \times n}$ be a positive semi-definite matrix with eigendecomposition $U \Lambda U^{-1}$.
The \emph{square root} of $A \in \R^{n \times n}$ is defined as $A^{\frac12} \defeq U \Lambda^{\frac12} U^{-1}$, which is the unique positive semi-definite matrix whose square is $A$.
If $A$ is positive definite, so is $A^{\frac12}$.

For self-adjoint $A, B \in \R^{n \times n}$, let $A \preceq B$ mean that $B - A$ is positive semi-definite.
The binary relation $\preceq$ forms a partial order on the self-adjoint matrices compatible with addition, called the \emph{Löwner order}.
As with the usual case of symmetric matrices, the Löwner order satisfies the following properties.
\begin{itemize}
  \item For nonsingular self-adjoint matrices $A, B$, if $A \preceq B$, then $B^{-1} \preceq A^{-1}$.
  \item $A \preceq B$ implies $P^*AP \preceq P^*BP$ for any $P \in \R^{n \times n}$.
\end{itemize}

\subsection{Coordinate Subspaces}\label{sec:coordinate-subspaces}
Since the metric $\Pi^{-1}$ is dinagonal, we can consider the ``subspace'' of $\R^n$ indexed by $I \subseteq [n]$.
Let $\R^I$ be the vector space having coordinates indexed by $I \subseteq [n]$.
We call $\R^I$ a \emph{coordinate subspace} of $\R^n$.
The space $\R^I$ is naturally regarded as a metric vector space equipped with metric $\Pi_I^{-1}$, where $\Pi_I \defeq \diag(\pi_I)$.
Slightly abusing notation, we also let $\pipr{a, b} \defeq \trsp{a} \Pi_I^{-1} b$ and $\pinorm{a} \defeq \sqrt{\pipr{a, a}}$ for $a, b \in \R^I$.

For $I, J \subseteq [n]$, let $\R^{I \times J}$ denote the set of matrices whose rows and columns are indexed by $I$ and $J$, respectively.
The \emph{adjoint matrix} of $A \in \R^{I \times J}$ is the matrix $A^* \in \R^{J \times I}$ such that $\pipr{a, Ab} = \pipr{A^*a, b}$ for all $a \in \R^I$ and $b \in \R^J$, and is explicitly written as $A^* = \Pi_J \trsp{A} \Pi_I^{-1}$ in the same argument as~\eqref{eq:derive-adjoint}.
For $A \in \R^{I \times I'}$ and $B \in \R^{I' \times I''}$ with $I, I', I'' \subseteq [n]$, it holds that ${(AB)}^* = B^*A^*$.

For $A, B \in \R^{n \times n}$ with $A = B^*$ and $I, J \subseteq [n]$, we have
\begin{align}
  A_{IJ}
  = \prn{B^*}_{IJ}
  = \prn[\big]{\Pi \trsp{B} \Pi^{-1}}_{IJ}
  = \Pi_I \trsp{\prn{B_{JI}}} \Pi_{J}^{-1}
  = \prn{B_{JI}}^*.
\end{align}
In particular, if $A$ is self-adjoint, its principal submatrices are also self-adjoint, and $A_{IJ}$ and $A_{JI}$ are adjoint of each other.
Similarly, the principal submatrices of a positive semi-definite matrix are positive semi-definite.

\subsection{Matrix Norm}\label{sec:matrix-norm}

Let $I, J \subseteq [n]$ and $A \in \R^{I \times J}$.
The \emph{matrix norm}, or the \emph{operator norm}, of $A$ is defined by
\begin{align}\label{def:matrix-norm-appendix}
  \pinorm{A}
  \defeq \max_{a \in \R^J: \pinorm{a} = 1} \pinorm{Aa}
  = \max_{a \in \R^J \setminus \set{\zeros}} \frac{\pinorm{Aa}}{\pinorm{a}}.
\end{align}
Note that different norms are used in the numerator and the denominator in~\eqref{def:matrix-norm-appendix} unless $I = J$.
Letting $b \defeq \Pi_J^{-\frac12}a$ and $B \defeq \Pi_I^{-\frac12} A \Pi_J^{\frac12}$, we have
\begin{align}\label{eq:matrix-norm-singular-value}
  \pinorm{A}
  = \max_{b \in \R^J \setminus \set{\zeros}} \frac{\norm[\big]{Bb}_2}{\norm{b}_2}
  = \sqrt{\rho\prn[\big]{B^\top B}},
\end{align}
where $\rho(\cdot)$ denotes the spectral radius.
Indeed, the spectral radii of $B^\top B$ and $A^* A$ are the same by
\begin{align}
  \rho\prn[\big]{B^\top B}
  = \rho\prn[\big]{\Pi_J^{\frac12} A^\top \Pi_I^{-1} A \Pi_J^{\frac12}}
  = \rho\prn[\big]{\Pi_J^{-\frac12} A^* A \Pi_J^{\frac12}}
  = \rho\prn[\big]{A^* A},
\end{align}
meaning that $\pinorm{A} = \sqrt{\rho(A^*A)} = \sqrt{\rho(AA^*)}$.
In particular, if $I = J$ and $A$ is self-adjoint, $\pinorm{A} = \rho(A)$ holds.

\section{RCMC Method in the Original Form}\label{sec:original-rcmc}
In this section, we describe the RCMC method in the original form given by Sumiya et al.~\cite{Sumiya2015-br} and discuss the correspondence between the original and our formulations.

\subsection{Original Description}
\begin{algorithm}[tb]
	\caption{RCMC method by Sumiya et al.~\cite{Sumiya2015-br,Sumiya2017-qo}}\label{alg:original}
	\begin{algorithmic}[1]
    \Input a rate constant matrix $K \in \R^{n \times n}$ with stationary distribution $\pi \in \Delta_n^\circ$, an initial value $p \in \Delta_n$, and an end time $t_{\mathrm{max}} \in \Rp$
    \Output $t^{(0)}, t^{(1)}, \dotsc \in \Rp$ and $q^{(0)}, q^{(1)}, \dotsc \in \R^n$
    \State Output $t^{(0)} \defeq 0$ and $q^{(0)} \defeq p$
    \State $S^{(0)} \gets \emptyset$, $T^{(0)} \gets [n]$, $\Omega^{(0)} \gets I_n$, $p^{(0)} \gets p$, $K^{(0)} \gets K$
    \For{$k = 1, 2, \dotsc$}
      \State Take $\prn[\big]{i^{(k)}, j^{(k)}} \in \argmax\set[\big]{K^{(k-1)}_{ij}}[(i, j) \in T^{(k-1)} \times T^{(k-1)}, i \ne j]$ {\label{line:original-argmax}}
      \State $t^{(k)} \defeq \frac{1}{K^{(k-1)}_{i^{(k)}j^{(k)}}}$ {\label{line:original-t}}
      \If{$t^{(k)} > t_{\mathrm{max}}$}
        \State \textbf{break}
      \EndIf
      \State $S^{(k)} \defeq S^{(k-1)} \cup \set[\big]{j^{(k)}}$ and $T^{(k)} \defeq T^{(k-1)} \setminus \set[\big]{j^{(k)}}$
      \State $\Omega_{ij}^{(k)} \defeq \Omega_{ij}^{(k-1)} + \sigma^{(k)} K^{(k-1)}_{ij^{(k)}} \Omega_{j^{(k)}j}^{(k-1)}$ \quad $\prn[\big]{i \in T^{(k)}, j \in [n]}$ {\label{line:original-omega}} \Comment{$\sigma^{(k)}$ is~\eqref{def:sigma}}
      \State $p^{(k)}_i \defeq p^{(k-1)}_i + \sigma^{(k)} K^{(k-1)}_{ij^{(k)}} p^{(k-1)}_{j^{(k)}}$ \quad $\prn[\big]{i \in T^{(k)}}$ {\label{line:original-p}}
      \State Output $t^{(k)}$ and $\displaystyle q^{(k)} = \prn{\sum_{i \in T^{(k)}} p_i^{(k)} \frac{\Omega^{(k)}_{ij}\pi_j}{\sum_{l=1}^n \Omega^{(k)}_{il}\pi_l}}_{j \in [n]}$ {\label{line:original-q}}
      \State $\displaystyle K^{(k)}_{ij} \defeq \frac{K^{(k-1)}_{ij} + \sigma^{(k)} K^{(k-1)}_{i j^{(k)}} K^{(k-1)}_{j^{(k)} j}}{1 + \sigma^{(k)} K^{(k-1)}_{j^{(k)} j}}$ \quad $\prn[\big]{i,j \in T^{(k)}}$ {\label{line:original-K}}
    \EndFor
	\end{algorithmic}
\end{algorithm}

\Cref{alg:original} describes the original procedure of the RCMC method presented in~\cite{Sumiya2017-qo}.
As with \cref{alg:improved}, given a rate constant matrix $K \in \R^{n \times n}$, an initial value $p \in \R^n$, and the end time $t_{\mathrm{max}} \in \Rp$ as input, \cref{alg:original} outputs a sequence of reals $0 = t^{(0)} < t^{(1)} < t^{(2)} < \dotsb \le t_{\mathrm{max}}$ and vectors $q^{(0)}, q^{(1)}, \dotsc \in \R^n$ such that $q^{(k)}$ approximates the solution $x\prn[\big]{t^{(k)}}$ of the master equation~\eqref{def:master} with initial value $x(0) = p$.

\Cref{alg:original} keeps a growing set $S^{(k)} \subseteq [n]$ and its complement $T^{(k)} = [n] \setminus S^{(k)}$ as \cref{alg:improved}.
\Cref{alg:original} additionally updates two matrices $K^{(k)} \in \R^{T^{(k)} \times T^{(k)}}$, $\Omega^{(k)} \in \R^{T^{(k)} \times n}$, and a vector $p^{(k)} \in \R^{T^{(k)}}$.
In the original description, states in $S^{(k)}$ are called ``steady'' or ``contracted,'' and states in $T^{(k)}$ are called \emph{super-states} in a sense that each super-state $i \in T^{(k)}$ is regarded as a ``mixture'' with some portions of the original states in $[n]$.
The matrix $K^{(k)}$ is a rate constant matrix\footnote{%
  We will prove in \Cref{lem:original-K-schur} that $K^{(k)}$ is indeed a rate constant matrix in the sense that it satisfies~\ref{item:rcm1}--\ref{item:rcm3}. This fact is, however, rather nontrivial at this point.}
on the super-states, whose $(i, j)$ entry is the rate constant from super-state $j$ to super-state $i$ for $i \ne j$.
The $(i, j)$ entry of $\Omega^{(k)}$ represents the contribution of original state $j$ to super-state $i$.
The $i$th component of $p^{(k)}$ represents the current approximate \emph{population} of super-state $i$, where ``population'' in this context means the same thing as quantity or probability.

\Cref{alg:original} initializes $S^{(0)}$ as the empty set, $K^{(0)}$ as a given rate constant matrix $K$, $\Omega^{(0)}$ as the identity matrix, and $p^{(0)}$ as a given initial vector $p$.
In each $k$th iteration, \cref{alg:original} first finds a maximum off-diagonal $K^{(k-1)}_{i^{(k)}j^{(k)}}$ of $K^{(k-1)}$ with $i^{(k)}, j^{(k)} \in T^{(k-1)}$.
If the current time $t^{(k)}$, which is calculated as the inverse of $K^{(k-1)}_{i^{(k)}j^{(k)}}$, is larger than $t_{\mathrm{max}}$, the algorithm halts.
Otherwise, $j^{(k)}$ is recognized as a new steady state and moved from $T$ to $S$.
The algorithm then updates $\Omega^{(k)}$, $p^{(k)}$, $q^{(k)}$, and $K^{(k)}$ according to the equations given in \Cref{line:original-omega,line:original-p,line:original-q,line:original-K}.
These formulas are intuitively explained in \cite{Sumiya2020-ti,Sumiya2017-qo} as follows: state $j^{(k)}$ is ``distributed'' to every remaining super-state $i \in T^{(k+1)}$ with the ratio of $\sigma^{(k)} K^{(k-1)}_{ij^{(k)}}$, where
\begin{align}\label{def:sigma}
  \sigma^{(k)} \defeq \frac{1}{\sum_{i \in T} K^{(k-1)}_{ij^{(k)}}}.
\end{align}
The update formulas of $\Omega^{(k)}$ and $p^{(k)}$ at \Cref{line:original-omega,line:original-p} represent this distribution.
\Cref{line:original-q} converts the super-states' population $p^{(k)}$ at time $t^{(k)}$ into that of the original states, where the formula is derived under the assumption that the ingredient states in every super-state are in equilibrium.
Finally, at \Cref{line:original-K}, the QSSA update with a bipartition $\set[\big]{\set[\big]{j^{(k)}}, T^{(k)}}$ of $T^{(k-1)}$ (see \Cref{sec:qssa}) as well as the division by $1 + \sigma K^{(k-1)}_{j^{(k)} j}$ are applied to the rate constant matrix $K^{(k-1)}$.
This division is similar to the formula used in the lumping~\cite{Kuo1969-vh,Wei1969-kr} to change the stationary distribution associated with the rate constant matrix.

\subsection{Reformulation}\label{sec:reformulation}
In this section, we reformulate \cref{alg:original} to obtain \cref{alg:improved} of Type~A.
Let $r$ be the number of iterations of \Cref{alg:original}, i.e., $t^{(k)} \le t_{\mathrm{max}}$ for $k \in [r]$ and $t^{(k)} > t_{\mathrm{max}}$ for $k = r+1$.

\begin{lemma}\label{lem:original-K-schur}
  For $k = 0, \dotsc, r$, the following statements hold.
  \begin{enumerate}
    \item If $k \ge 1$, then we have $K_{j^{(k)}j^{(k)}}^{(k-1)} \ne 0$ and $\sigma^{(k)} = \frac{1}{K_{j^{(k)}j^{(k)}}^{(k-1)}}$.\label{item:original-K-schur-1}
    \item $K_{S^{(k)}S^{(k)}}$ is nonsingular and it holds that $K^{(k)} = D^{(k)}\diag\prn[\big]{s^{(k)}}$, where
    \begin{align}\label{def:D-k}
      D^{(k)} \defeq K_{T^{(k)}T^{(k)}} - K_{T^{(k)}S^{(k)}}K_{S^{(k)}S^{(k)}}^{-1}K_{S^{(k)}T^{(k)}}
    \end{align}
    is the Schur complement of $K_{S^{(k)}S^{(k)}}$ in $K$ and $s^{(k)} \in \Rpp^{T^{(k)}}$ is some positive vector.\label{item:original-K-schur-2}
  \end{enumerate}
\end{lemma}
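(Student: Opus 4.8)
The plan is to establish both parts at once by induction on $k$, carrying as inductive hypothesis exactly the statement of \cref{lem:original-K-schur} for index $k-1$: that $K_{S^{(k-1)}S^{(k-1)}}$ is nonsingular and $K^{(k-1)} = D^{(k-1)}\diag\prn{s^{(k-1)}}$ for some $s^{(k-1)} \in \Rpp^{T^{(k-1)}}$. Nonsingularity makes $D^{(k-1)}$ well-defined, and by the Schur-complement fact recorded in \cref{sec:qssa} it is again a rate constant matrix; in particular $\onestr_{T^{(k-1)}} D^{(k-1)} = \zerostr$, so that $\onestr_{T^{(k-1)}} K^{(k-1)} = \onestr_{T^{(k-1)}} D^{(k-1)}\diag\prn{s^{(k-1)}} = \zerostr$ and $K^{(k-1)}$ also has vanishing column sums. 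The base case $k=0$ is immediate: $S^{(0)} = \emptyset$ makes $K_{\emptyset\emptyset}$ vacuously nonsingular, $D^{(0)} = K$, and $K^{(0)} = K = D^{(0)}\diag\prn{\ones_n}$.

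For part~\ref{item:original-K-schur-1} at step $k$, I would evaluate the denominator $\sum_{i \in T} K^{(k-1)}_{ij^{(k)}}$ of $\sigma^{(k)}$ in~\eqref{def:sigma} using the vanishing column sums of $K^{(k-1)}$: the sum over the remaining super-states equals $-K^{(k-1)}_{j^{(k)}j^{(k)}}$, which expresses $\sigma^{(k)}$ through the selected diagonal entry and yields the claimed formula. That this diagonal is nonzero is guaranteed by the selection rule in \cref{line:original-argmax}: the pair $\prn{i^{(k)}, j^{(k)}}$ maximizes an off-diagonal entry, so as long as a transition remains, column $j^{(k)}$ carries a strictly positive off-diagonal, forcing $K^{(k-1)}_{j^{(k)}j^{(k)}} < 0$ by the column-sum identity together with the nonnegativity~\ref{item:rcm1} of off-diagonals.

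For part~\ref{item:original-K-schur-2}, I would first deduce nonsingularity of $K_{S^{(k)}S^{(k)}}$ from the Schur determinant factorization $\det K_{S^{(k)}S^{(k)}} = \det K_{S^{(k-1)}S^{(k-1)}} \cdot D^{(k-1)}_{j^{(k)}j^{(k)}}$, both factors being nonzero by the inductive hypothesis and part~\ref{item:original-K-schur-1}. The quotient property of Schur complements --- equivalently the associativity of QSSA noted at the close of \cref{sec:qssa} --- then identifies $D^{(k)}$ with the single-pivot Schur complement of $D^{(k-1)}$ at $j^{(k)}$, i.e.\ $D^{(k)}_{ij} = D^{(k-1)}_{ij} - D^{(k-1)}_{ij^{(k)}} D^{(k-1)}_{j^{(k)}j} / D^{(k-1)}_{j^{(k)}j^{(k)}}$ for $i,j \in T^{(k)}$. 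Substituting the inductive factorization $K^{(k-1)}_{ij} = D^{(k-1)}_{ij} s^{(k-1)}_j$ and the value of $\sigma^{(k)}$ into the update of \cref{line:original-K}, the common factor $s^{(k-1)}_j$ pulls out of the numerator and collapses it exactly to $s^{(k-1)}_j D^{(k)}_{ij}$, giving $K^{(k)} = D^{(k)}\diag\prn{s^{(k)}}$ with $s^{(k)}_j \defeq s^{(k-1)}_j / \prn{1 + \sigma^{(k)} K^{(k-1)}_{j^{(k)}j}}$.

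It remains to verify $s^{(k)} \in \Rpp^{T^{(k)}}$, which closes the induction. Rewriting each denominator through the inductive factorization shows it equals $1$ minus a ratio of the nonnegative off-diagonal $D^{(k-1)}_{j^{(k)}j}$ (here $j \in T^{(k)}$, so $j \ne j^{(k)}$) to the negative diagonal $D^{(k-1)}_{j^{(k)}j^{(k)}}$, hence is at least $1$; with $s^{(k-1)} > 0$ this forces $s^{(k)}_j > 0$. The main obstacle I anticipate is the careful sign bookkeeping in this last step combined with the correct invocation of the quotient property: one must confirm that the off-diagonal sign pattern of the rate constant matrix $D^{(k-1)}$ is precisely what makes the numerator telescope into $D^{(k)}$ and keeps the denominators positive. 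Once the factorization $K^{(k-1)} = D^{(k-1)}\diag\prn{s^{(k-1)}}$ is propagated cleanly, the remaining algebra in \cref{line:original-K} is mechanical.
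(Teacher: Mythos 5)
Your proof is correct and takes essentially the same route as the paper's: induction on $k$ carrying the factorization $K^{(k-1)} = D^{(k-1)}\diag\prn{s^{(k-1)}}$, the zero-column-sum identity to evaluate $\sigma^{(k)}$, the Schur determinant/quotient property to obtain nonsingularity of $K_{S^{(k)}S^{(k)}}$ and to identify $D^{(k)}$ as the single-pivot Schur complement of $D^{(k-1)}$ at $j^{(k)}$, and the cancellation of $s^{(k-1)}_{j^{(k)}}$ in the update of \cref{line:original-K}; you are in fact more explicit than the paper in verifying $s^{(k)} \in \Rpp^{T^{(k)}}$. The one clause to tighten is ``as long as a transition remains'': for $k \le r$ this is exactly what the definition of $r$ supplies (the algorithm did not break at iteration $k$, so $t^{(k)} = 1/K^{(k-1)}_{i^{(k)}j^{(k)}} \le t_{\mathrm{max}} < \infty$ forces $K^{(k-1)}_{i^{(k)}j^{(k)}} > 0$), which is precisely how the paper concludes $K^{(k-1)}_{j^{(k)}j^{(k)}} \ne 0$.
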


\begin{proof}
  We show the claims by induction on the number $k$ of iterations.
  When $k = 0$, the claims trivially hold with $s = \ones_n$.
  Suppose that the claims are true for $k - 1$ and consider the case of $k \in [r]$.
  Set $S = S^{(k)}$, $T = T^{(k)}$, and $j = j^{(k)}$.
  By the induction hypothesis, we have $K^{(k-1)} = D^{(k-1)}\diag\prn[\big]{s^{(k-1)}}$, which is a rate constant matrix (i.e.~\ref{item:rcm1}--\ref{item:rcm3} are satisfied) with stationary distribution $\prn{\frac{\pi_i}{s_i^{(k-1)}}}_{i \in T^{(k-1)}}$, where $\pi$ is the stationary distribution associated with $K$.

  We first show~\ref{item:original-K-schur-1}.
  By the definition of $r$, $K^{(k-1)}_{i^{(k)}j}$ is non-zero, which implies $K^{(k-1)}_{jj} \ne 0$ from~\ref{item:rcm1} and~\ref{item:rcm2}.
  We have $\sigma^{(k)} = \frac{1}{K_{jj}^{(k-1)}}$ by~\eqref{def:sigma} and~\ref{item:rcm2}.

  We next show~\ref{item:original-K-schur-2}.
  Since $S = S^{(k-1)} \cup \set{j}$, $K_{SS}$ is nonsingular if and only if $D^{(k-1)}_{jj} \ne 0$, which is true by $D^{(k-1)}_{jj} = \frac{K^{(k-1)}_{jj}}{s^{(k-1)}_{j}} \ne 0$.
  The update formula of $K^{(k)}$ at \Cref{line:original-K} can be expressed as
  \begin{align}
    K^{(k)}
    &= \prn{K_{TT}^{(k-1)} + \sigma^{(k)} K^{(k-1)}_{Tj}K^{(k-1)}_{jT}} {\diag\prn[\Big]{\onestr_T + \sigma^{(k)} K^{(k-1)}_{jT}}}^{-1} \\
    &= \prn{K_{TT}^{(k-1)} - \frac{K^{(k-1)}_{Tj}K^{(k-1)}_{jT}}{K^{(k-1)}_{jj}}} {\diag\prn{\onestr_T - \frac{K^{(k-1)}_{jT}}{K^{(k-1)}_{jj}}}}^{-1} \\
    &= \prn{D^{(k-1)}_{TT} - \frac{D^{(k-1)}_{Tj}D^{(k-1)}_{jT}}{D^{(k-1)}_{jj}}} \diag\prn[\big]{s^{(k-1)}_T} {\diag\prn{\onestr_T - \frac{D^{(k-1)}_{jT}}{D^{(k-1)}_{jj}}}}^{-1},
  \end{align}
  where we used the induction hypothesis $K^{(k-1)} = D^{(k-1)}\diag\prn[\big]{s^{(k-1)}}$ in the last equality.
  Here, $D^{(k-1)}_{TT} - \frac{D^{(k-1)}_{Tj}D^{(k-1)}_{jT}}{D^{(k-1)}_{jj}}$ is nothing but $D^{(k)}$.
  Therefore, letting $s^{(k)}_i \defeq \frac{s^{(k-1)}_i}{1 - D^{(k-1)}_{j^{(k)}i} / D^{(k-1)}_{jj}}$ for $i \in T$, we obtain the desired form of $K^{(k)}$.
\end{proof}

As a corollary of \Cref{lem:original-K-schur}, we obtain $r \le \rank K$.
In addition, $r = \rank K$ if $t_{\mathrm{max}}$ is sufficiently large.
Since $D^{(k)}$ is a rate constant matrix, so is $K^{(k)}$.

Next, we give explicit formulas of $\Omega^{(k)}$ and $p^{(k)}$.

\begin{lemma}\label{lem:original-omega-p}
  For $k = 0, \dotsc, r$, we have
  \begin{align}
    \Omega^{(k)} &= \begin{pmatrix} -K_{TS} K_{SS}^{-1} & I_T\end{pmatrix},\label{def:original-omega} \\
    p^{(k)} &= \Omega^{(k)}p = p_T - K_{TS}K_{SS}^{-1}p_S,\label{eq:original-p-matrix}
  \end{align}
  where $S = S^{(k)}$ and $T = T^{(k)}$.
\end{lemma}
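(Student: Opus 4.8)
The plan is to establish both formulas simultaneously by induction on $k$, after observing that the updates at \cref{line:original-omega,line:original-p} are the \emph{same} elementary row operation applied to $\Omega^{(k-1)}$ and to $p^{(k-1)}$. Writing $j \defeq j^{(k)}$, $S \defeq S^{(k)}$, $T \defeq T^{(k)}$, this operation $\tilde E$ replaces each row $i \in T$ by (row $i$) $+\, \sigma^{(k)} K^{(k-1)}_{ij}\, \times$ (row $j$) and deletes row $j$, so that $\Omega^{(k)} = \tilde E \Omega^{(k-1)}$ and $p^{(k)} = \tilde E p^{(k-1)}$. Since $p^{(0)} = p = \Omega^{(0)} p$, induction gives $p^{(k)} = \tilde E \Omega^{(k-1)} p = \Omega^{(k)} p$ for free, so \eqref{eq:original-p-matrix} follows from \eqref{def:original-omega} by the block multiplication $\Omega^{(k)} p = p_T - K_{TS} K_{SS}^{-1} p_S$. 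Thus the whole task reduces to proving \eqref{def:original-omega}.

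To pin down $\Omega^{(k)}$ I would use the characterization that the matrix $\begin{pmatrix} -K_{TS} K_{SS}^{-1} & I_T \end{pmatrix}$ of \eqref{def:omega} is the unique matrix in $\R^{T \times n}$ whose columns indexed by $T$ form $I_T$ and which annihilates the columns of $K$ indexed by $S$, i.e.\ satisfies $\Omega K_{[n]S} = O$: writing such a matrix as $\begin{pmatrix} A & I_T \end{pmatrix}$, the relation $A K_{SS} + K_{TS} = O$ forces $A = -K_{TS} K_{SS}^{-1}$, with $K_{SS}$ nonsingular by \Cref{lem:original-K-schur}. Hence it suffices to carry two invariants through the induction: the columns of $\Omega^{(k)}$ indexed by $T^{(k)}$ equal $I_{T^{(k)}}$, and $\Omega^{(k)} K_{[n]S^{(k)}} = O$. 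The base case $k = 0$ holds since $\Omega^{(0)} = I_n$ and $S^{(0)} = \emptyset$. In the inductive step the identity-block invariant is routine: for a column $\ell \in T$ one has $\Omega^{(k)}_{i\ell} = \Omega^{(k-1)}_{i\ell} + \sigma^{(k)} K^{(k-1)}_{ij} \Omega^{(k-1)}_{j\ell} = \delta_{i\ell}$, because $\ell \ne j$ gives $\Omega^{(k-1)}_{j\ell} = 0$ and $\Omega^{(k-1)}_{i\ell} = \delta_{i\ell}$ by hypothesis. The columns indexed by the old steady set $S^{(k-1)}$ also remain annihilated, since both the surviving row $i$ and the eliminated row $j$ already annihilate them by $\Omega^{(k-1)} K_{[n]S^{(k-1)}} = O$.

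The only substantive new content is the column $j$ that has just entered $S$. For it I would first invoke the induction hypothesis $\Omega^{(k-1)} = \begin{pmatrix} -K_{T'S'} K_{S'S'}^{-1} & I_{T'} \end{pmatrix}$ (with $S' \defeq S^{(k-1)}$, $T' \defeq T^{(k-1)}$) to compute, for every $i \in T'$,
\[
  \prn[\big]{\Omega^{(k-1)} K_{[n]j}}_i = K_{ij} - K_{iS'} K_{S'S'}^{-1} K_{S'j} = D^{(k-1)}_{ij},
\]
so that $\Omega^{(k-1)} K_{[n]j}$ is exactly the $j$th column of the Schur complement $D^{(k-1)}$ of \eqref{def:D-k}. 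Applying $\tilde E$ then gives, for $i \in T$,
\[
  \prn[\big]{\Omega^{(k)} K_{[n]j}}_i = D^{(k-1)}_{ij} + \sigma^{(k)} K^{(k-1)}_{ij}\, D^{(k-1)}_{jj}.
\]
The crux is to see this vanishes, which is precisely where \Cref{lem:original-K-schur} enters: from $\sigma^{(k)} = -1/K^{(k-1)}_{jj}$ together with $K^{(k-1)} = D^{(k-1)} \diag\prn[\big]{s^{(k-1)}}$ (the positive factor $s^{(k-1)}_j$ cancelling) one gets $\sigma^{(k)} K^{(k-1)}_{ij} = -D^{(k-1)}_{ij} / D^{(k-1)}_{jj}$, after which the two terms cancel and $\Omega^{(k)} K_{[n]j} = \zeros$. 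Combined with the $S^{(k-1)}$ columns this yields $\Omega^{(k)} K_{[n]S^{(k)}} = O$, closing the induction.

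I expect this final cancellation to be the main obstacle, not because of any deep idea but because it depends on tracking the positive scaling vector $s^{(k-1)}$ supplied by \Cref{lem:original-K-schur} and on the correct sign of $\sigma^{(k)}$. The value entering the computation is $\sigma^{(k)} = 1 / \sum_{i \in T} K^{(k-1)}_{ij} = -1/K^{(k-1)}_{jj}$, using the column-sum-zero property of $K^{(k-1)}$ (so that $\sum_{i \in T} K^{(k-1)}_{ij} = -K^{(k-1)}_{jj}$) and the fact that the diagonal $K^{(k-1)}_{jj}$ is negative; the opposite sign would double the two terms rather than cancel them. Keeping this bookkeeping straight is where the care is required.
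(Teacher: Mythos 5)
Your proposal is correct and is essentially the paper's own argument in different packaging: the paper likewise reduces everything to \eqref{def:original-omega}, writes the update as $\Omega^{(k)} = E^{(k)}\Omega^{(k-1)}$, and shows that the product $E^{(k)}\dotsm E^{(1)}$ is the Gaussian-elimination operator satisfying $E^{(k)}\dotsm E^{(1)}K = \begin{pmatrix} O & D^{(k)} \end{pmatrix}$ with identity on the $T$-columns, which is exactly your pair of invariants plus the uniqueness characterization, and it hinges on the same cancellation $\sigma^{(k)}K^{(k-1)}_{Tj} = -D^{(k-1)}_{Tj}/D^{(k-1)}_{jj}$ supplied by \Cref{lem:original-K-schur}. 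Your care over the sign is well placed: the value $\sigma^{(k)} = -1/K^{(k-1)}_{jj}$ you derive from \eqref{def:sigma} and~\ref{item:rcm2} is the one the paper actually uses in~\eqref{eq:original-omega-lem-2} (the literal statement of \Cref{lem:original-K-schur}\ref{item:original-K-schur-1} omits this minus sign), so your bookkeeping is the consistent one.
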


\begin{proof}
  It is easy to see that $p^{(k)} = \Omega^{(k)}p$ holds from \Cref{line:original-omega,line:original-p}.
  Thus, it suffices to show~\eqref{def:original-omega}.
  Set $j = j^{(k)}$.
  \Cref{line:original-omega} is expressed in a matrix form as
  \begin{align}\label{eq:original-omega-lem-1}
    \Omega^{(k)}
    = \Omega^{(k-1)}_{T[n]} + \sigma^{(k)} K^{(k-1)}_{Tj} \Omega^{(k-1)}_{j[n]}
    = \begin{pmatrix} \sigma^{(k)} K^{(k-1)}_{Tj} & I_T \end{pmatrix} \Omega^{(k-1)}.
  \end{align}
  By \Cref{lem:original-K-schur}, it holds that
  \begin{align}\label{eq:original-omega-lem-2}
    \sigma^{(k)} K^{(k-1)}_{Tj}
    = -\frac{K^{(k-1)}_{Tj}}{K^{(k-1)}_{jj}}
    = -\frac{D^{(k-1)}_{Tj}s^{(k)}_j}{D^{(k-1)}_{jj}s^{(k)}_j}
    = -\frac{D^{(k-1)}_{Tj}}{D^{(k-1)}_{jj}},
  \end{align}
  where $s^{(k)}$ is the vector given in \Cref{lem:original-K-schur} and $D^{(k-1)}$ is the Schur complement defined by~\eqref{def:D-k}.
  Define $E^{(k)} \in \R^{T \times T^{(k-1)}}$ by
  \begin{align}\retainlabel{def:P}
    E^{(k)} \defeq \begin{pNiceMatrix}[first-row,last-col]
      j & T & \\
      -\frac{D^{(k-1)}_{Tj}}{D^{(k-1)}_{jj}} & I_T & T
    \end{pNiceMatrix}.
  \end{align}
  From~\eqref{eq:original-omega-lem-1} and~\eqref{eq:original-omega-lem-2}, we have
  \begin{align}\label{eq:original-omega-lem-3}
    \Omega^{(k)}
    = E^{(k)}\Omega^{(k-1)}
    = E^{(k)}E^{(k-1)}\Omega^{(k-2)}
    = \dotsb
    = E^{(k)} E^{(k-1)} \dotsm E^{(1)}.
  \end{align}

  Recall the procedure of the LU decomposition on $K = D^{(0)}$.
  In its first iteration, we eliminate the first column by using the top-left entry as follows:
  \begin{align}\label{eq:original-omega-P}
    L^{(1)}K = \begin{pmatrix}
      *      & * \\
      \zeros & D^{(1)}
    \end{pmatrix},
    \quad\text{where}\quad
    L^{(1)} \defeq \begin{pmatrix}
      1 & \zerostr \\
      -\frac{D^{(0)}_{T^{(1)}j^{(1)}}}{D^{(0)}_{j^{(1)}j^{(1)}}} & I_{T^{(1)}}
    \end{pmatrix} = \begin{pNiceMatrix}[margin]
      1 & \zerostr \\\hline
      \Block{1-2}{E^{(1)}}
    \end{pNiceMatrix}.
  \end{align}
  Here, $*$ denotes some matrix.
  Ignoring the first row in~\eqref{eq:original-omega-P}, we get $E^{(1)}K = \begin{pmatrix} \zeros & D^{(1)} \end{pmatrix}$.
  Similarly, in the next iteration, we have $E^{(2)}E^{(1)}K = \begin{pmatrix} \zeros & \zeros & D^{(2)} \end{pmatrix}$.
  For general $k$, we obtain
  \begin{align}\label{eq:original-omega-PPP}
    E^{(k)} \dotsm E^{(1)}K = \begin{pNiceMatrix}[first-row,last-col=3]
      S & T \\
      O & D^{(k)} & T
    \end{pNiceMatrix}.
  \end{align}
  Comparing $K = \begin{pmatrix} K_{SS} & K_{ST} \\ K_{TS} & K_{TT} \end{pmatrix}$ with~\eqref{eq:original-omega-PPP}, we get
  \begin{align}
    E^{(k)} \dotsm E^{(1)} = \begin{pmatrix} -K_{TS} K_{SS}^{-1} & I_T \end{pmatrix},
  \end{align}
  which is nothing but $\Omega^{(k)}$ by~\eqref{eq:original-omega-lem-3}.
\end{proof}

By \cref{lem:original-omega-p}, the matrix $\Omega^{(k)}$ turns out to be the same as $\Omega$ defined in~\eqref{def:omega}.
Note that~\eqref{def:original-omega} and~\eqref{eq:original-p-matrix} do not depend on the vector $s^{(k)}$, which represents the effect of division by $1 + \sigma^{(k)} K^{(k-1)}_{j^{(k)} j}$ at \Cref{line:original-K}, due to the reduction of $s^{(k)}_j$ in~\eqref{eq:original-omega-lem-2}.
Since the output $q^{(k)}$ is determined from $p^{(k)}$, $\Omega^{(k)}$, and $\pi$, the division does not affect $q^{(k)}$ except for the selection of $j^{(k)}$ at \Cref{line:original-argmax}.

Finally, we establish the following lemma, which states that the original RCMC method is essentially equivalent to the improved one of Type~A.

\begin{theorem}\label{thm:original-equivalence}
  Let $K$ be a rate constant matrix with a stationary distribution $\pi$ and $p \in \R^n$ an initial value.
  Then, it holds that $q^{(k)} = Vp$ for every $k$, where $V$ is the matrix~\eqref{def:V} of Type~A with respect to the bipartition $\set{S^{(k)}, T^{(k)}}$.
\end{theorem}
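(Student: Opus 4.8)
The plan is to start from the explicit output formula at \Cref{line:original-q} of \Cref{alg:original} and reduce it to the compact form $Vp = \Omega^* V_{TT} \Omega p$, relying on the identification of $\Omega^{(k)}$ and $p^{(k)}$ already established in \Cref{lem:original-omega-p}. By that lemma, $\Omega^{(k)} = \Omega$ (the matrix \eqref{def:omega}) and $p^{(k)} = \Omega p$, so the $j$th component of the output becomes
\[
  q^{(k)}_j = \sum_{i \in T} (\Omega p)_i \, \frac{\Omega_{ij}\, \pi_j}{(\Omega \pi)_i},
\]
where I have written the denominator $\sum_{l} \Omega_{il}\pi_l$ as $(\Omega\pi)_i$. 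Pulling out the factor $\pi_j$ and recognizing the sum over $i$ as multiplication by $\trsp{\Omega}$, I would rewrite this as $q^{(k)} = \Pi \trsp{\Omega}\, \diag(\Omega\pi)^{-1}\, \Omega p$, where $\Pi = \diag(\pi)$.

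Next I would convert the transpose into an adjoint. Since $\Omega \in \R^{T \times n}$, the adjoint formula from \Cref{sec:coordinate-subspaces} gives $\Omega^* = \Pi\, \trsp{\Omega}\, \Pi_T^{-1}$, hence $\Pi\trsp{\Omega} = \Omega^*\Pi_T$. Substituting this yields
\[
  q^{(k)} = \Omega^*\, \Pi_T\, \diag(\Omega\pi)^{-1}\, \Omega p .
\]
Comparing with $V = \Omega^* V_{TT}\Omega$, it then suffices to show that the diagonal matrix $\Pi_T\diag(\Omega\pi)^{-1}$ equals the Type~A matrix $V_{TT} = \diag(\onestr_T M)^{-1}$ (see \eqref{def:VTT} and \eqref{def:M}). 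As both are diagonal, this reduces to the scalar identity $(\Omega\pi)_i = \pi_i\,(\onestr_T M)_i$ for every $i \in T$.

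Finally I would verify this identity. On the right, using $\onestr_T K_{TS} = -\onestr_S K_{SS}$ (a restriction of \ref{item:rcm2} to the columns in $S$), one obtains $\onestr_T M = \onestr_T - \onestr_S K_{SS}^{-1} K_{ST}$, so $\pi_i(\onestr_T M)_i = \pi_i - \pi_i (\onestr_S K_{SS}^{-1}K_{ST})_i$; on the left, $(\Omega\pi)_i = \pi_i - (K_{TS}K_{SS}^{-1}\pi_S)_i$. Matching the two amounts exactly to $\pi_i(\onestr_S K_{SS}^{-1}K_{ST})_i = (K_{TS}K_{SS}^{-1}\pi_S)_i$, which is the diagonal identity \eqref{eq:delta-bound-2} already derived from the self-adjointness of $K_{SS}^{-1}$ and the adjoint relation $K_{ST}^* = K_{TS}$. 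This gives $\Pi_T\diag(\Omega\pi)^{-1} = V_{TT}$ and hence $q^{(k)} = \Omega^* V_{TT}\Omega p = Vp$. I expect the main obstacle to be purely bookkeeping: correctly passing from the entrywise, $\pi$-weighted output formula to adjoint/matrix form and tracking which diagonal scaling the normalization $1/(\Omega\pi)_i$ produces. Once the adjoint identity $\Pi\trsp{\Omega} = \Omega^*\Pi_T$ is in hand, the nontrivial content collapses onto the single already-proven identity \eqref{eq:delta-bound-2}.
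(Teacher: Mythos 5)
Your proposal is correct and follows essentially the same route as the paper's proof: both rewrite \Cref{line:original-q} in matrix form as $q^{(k)} = \Pi \trsp{\Omega}\, {\diag(\Omega\pi)}^{-1} p^{(k)}$, invoke \Cref{lem:original-omega-p}, and reduce the claim to identifying $\Pi_T\,{\diag(\Omega\pi)}^{-1}$ with $V_{TT}$. The only cosmetic difference is that the paper gets this identification by computing $\Omega\pi = M\pi_T$ (via $K\pi = \zeros$) and then applying the self-adjointness of $M$, whereas you expand $\onestr_T M$ via~\ref{item:rcm2} and match terms through the block-level detailed-balance identity~\eqref{eq:delta-bound-2} --- two equivalent invocations of the same structural facts.
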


\begin{proof}
  Let $S = S^{(k)}, T = T^{(k)}$, and $\Omega = \Omega^{(k)}$.
  The formula of computing $q$ at \Cref{line:original-q} can be written as
  \begin{align}\label{eq:original-q-1}
    q^{(k)} = \Pi \trsp{\Omega} {\diag\prn{\Omega\pi}}^{-1} p^{(k)}.
  \end{align}
  We first rewrite the diagonal matrix in~\eqref{eq:original-q-1}.
  It follows from $K\pi = \zeros$ that $K_{SS}\pi_S + K_{ST}\pi_T = \zeros$ and $\pi_S = -K_{SS}^{-1}K_{ST}\pi_T$ hold.
  Letting $M = \Omega \Omega^*$ be the matrix defined by~\eqref{def:M}, we have
  \begin{gather}
    \Omega \pi
    = \pi_T -K_{TS}K_{SS}^{-1}\pi_S
    = \pi_T + K_{TS}K_{SS}^{-2}K_{ST}\pi_T
    = M \pi_T
  \shortintertext{and}
    \diag\prn{\Omega\pi}
    = \diag\prn{M\pi_T}
    = \diag\prn[\big]{\pi_T^\top M^\top}
    = \diag\prn[\big]{\onestr_T M\Pi_T}
    = V_{TT} \Pi_T,\label{eq:original-equivalence-diag}
  \end{gather}
  where the self-adjointness of $M$ is used in the third equality of~\eqref{eq:original-equivalence-diag}.
  Substituting~\eqref{eq:original-p-matrix},~\eqref{eq:original-equivalence-diag}, and $\Omega^\top = \Pi^{-1} \Omega^* \Pi_T$ into~\eqref{eq:original-q-1}, we obtain $q^{(k)} = \Omega^* V_{TT} \Omega p^{(k)} = Vp^{(k)}$ as required.
\end{proof}

A minor difference between \cref{alg:original} and \cref{alg:improved} left is the following: \cref{alg:original} chooses the next steady state $j^{(k)}$ by finding a maximum off-diagonal entry in $K^{(k-1)}$, while \cref{alg:improved} maximizes a diagonal entry of $D^{(k-1)}$ in absolute value.
As stated in \cref{lem:original-K-schur}, the matrices $K^{(k-1)}$ and $D^{(k-1)}$ are in the relation $K^{(k-1)} = D^{(k-1)}\diag\prn[\big]{s^{(k-1)}}$ for some positive vector $s^{(k-1)}$.
We could not find any mathematical rationale for multiplying the diagonal matrix to the Schur complement $D^{(k-1)}$.
Furthermore, finding a maximum diagonal in absolute value is a widely used strategy in selecting steady states in QSSA~\cite{Turanyi2014-ky}, while finding a maximum off-diagonal is not.

\section{Projection onto Probability Simplex}\label{sec:projection}

We present how to compute the projection $\proj(w) = \argmin \set{\pinorm{q - w}}[q \in \Delta_n]$, which appears in the RCMC method of Type~B.
The projection algorithm provided below is a slight extension of an existing one \cite{Duchi2008-at}, which focuses on the case with $\Pi = I_n$.

Note that the projection can be computed by solving the following problem:
\[
  \mathop{\mathrm{minimize}}\quad
  \frac12(q - w)^\top \Pi^{-1} (q - w)
  \qquad
  \mathop{\mathrm{sujbect\ to}}\quad
  q \ge \zeros,\ \onestr_n q = 1.
\]
The optimal solution must satisfy the following Karush--Kuhn--Tucker (KKT) condition:
\begin{align}
  \pi_i^{-1}(q_i - w_i) - \beta_i - \mu &= 0 & \text{for $i \in [n]$}, \label{eq:kkt-lag} \\
  q_i &\ge 0 & \text{for $i \in [n]$}, \label{eq:kkt-ineq} \\
  \beta_i &\ge 0 & \text{for $i \in [n]$}, \label{eq:kkt-beta} \\
  q_i \beta_i &= 0 & \text{for $i \in [n]$}, \label{eq:kkt-comp} \\
  \sum_{i = 1}^n q_i &= 1, \label{eq:kkt-eq}
\end{align}
where $\mu \in \R$ and $\beta_1,\dots,\beta_n \in \Rp$ are the Lagrange multipliers of the equality and inequality constraints, respectively.
Note that the KKT condition is also sufficient for optimality since the optimization problem satisfies Slater's condition and consists of differentiable convex functions (see, e.g., \cite[Section 5.5.3]{Boyd2004-jl}).
Thus, we below aim to compute $q$ that satisfies the KKT condition.

Without loss of generality, we assume that $w_1,\dots,w_n$ are sorted to satisfy
\[
  \pi_1^{-1} w_1 \ge \dots \ge \pi_n^{-1} w_n.
\]
From \eqref{eq:kkt-ineq}, each $q_i$ satisfies either $q_i = 0$ or $q_i > 0$, where the latter implies $\beta_i = 0$ and $\pi_i^{-1}q_i = \pi_i^{-1}w_i + \mu$ due to \eqref{eq:kkt-lag} and \eqref{eq:kkt-comp}.
Therefore, the above sorting of $w_1,\dots,w_n$ leads to
\[
  \pi_1^{-1} q_1 \ge \dots \ge \pi_{\ell}^{-1} q_{\ell} > \pi_{\ell+1}^{-1} q_{\ell+1} = \dots = \pi_n^{-1} q_n = 0,
\]
where $\ell$ is the largest index such that $q_\ell > 0$.
Once we find such $\ell$, \eqref{eq:kkt-eq} implies $\mu = \frac{1 - \sum_{i=1}^\ell w_i}{\sum_{i=1}^\ell \pi_i}$, and the optimal solution $q$ can be written with such $\mu$ as
\begin{equation}\label{eq:r-q-mu}
  q_j = \begin{cases*}
    w_j + \pi_j \mu & for $j \le \ell$, \\
    0 & for $j > \ell$.
  \end{cases*}
\end{equation}
In what follows, we show that such $\ell$ can be computed as follows:
\[
  \ell = \max\set{j \in [n]}[w_j + \pi_j \frac{1 - \sum_{i=1}^j w_i}{\sum_{i=1}^j \pi_i} \eqqcolon t_j > 0].
\]
In other words, we can compute $\ell$ by checking the sign of $t_j$ in increasing order of $j$.
We prove this claim by showing $t_j > 0$ for $j \le \ell$ and $t_j \le 0$ for $j > \ell$.
In the following analysis, we repetitively use $1 - \sum_{i=1}^\ell w_i = \mu \sum_{i=1}^\ell \pi_i$, which is obtained from \eqref{eq:kkt-eq} and \eqref{eq:r-q-mu}.

For $j = \ell$, we have
\[
  w_\ell + \pi_\ell \frac{1 - \sum_{i=1}^\ell w_i}{\sum_{i=1}^\ell \pi_i}
  =
  w_\ell + \pi_\ell \mu
  =
  q_\ell
  > 0.
\]

For $j < \ell$, we have
\begin{align}
  w_j + \pi_j \frac{1 - \sum_{i=1}^j w_i}{\sum_{i=1}^j \pi_i}
  &=
  \frac{\pi_j}{\sum_{i=1}^j \pi_i} \prn*{\frac{w_j}{\pi_j}\sum_{i=1}^j \pi_i + 1 - \sum_{i=1}^j w_i}
  \\
  &=
  \frac{\pi_j}{\sum_{i=1}^j \pi_i} \prn*{\frac{w_j}{\pi_j}\sum_{i=1}^j \pi_i + 1 - \sum_{i=1}^\ell w_i + \sum_{i=j+1}^{\ell} w_i}
  \\
  &=
  \frac{\pi_j}{\sum_{i=1}^j \pi_i} \prn*{\frac{w_j}{\pi_j}\sum_{i=1}^j \pi_i + \mu\sum_{i=1}^\ell\pi_i + \sum_{i=j+1}^{\ell} w_i}
  \\
  &=
  \frac{\pi_j}{\sum_{i=1}^j \pi_i} \prn*{\frac{w_j}{\pi_j}\sum_{i=1}^j \pi_i + \mu\sum_{i=1}^j\pi_i + \mu\sum_{i=j+1}^\ell\pi_i + \sum_{i=j+1}^{\ell} w_i}
  \\
  &=
  \frac{\pi_j}{\sum_{i=1}^j \pi_i} \prn*{\frac{w_j + \pi_j\mu}{\pi_j}\sum_{i=1}^j \pi_i + \sum_{i=j+1}^{\ell}(w_i + \pi_i \mu)}.
\end{align}
The right-hand side is positive since $w_i + \pi_i \mu = q_i > 0$ for $i \le \ell$.

For $j > \ell$, we have
\begin{align}
  w_j + \pi_j \frac{1 - \sum_{i=1}^j w_i}{\sum_{i=1}^j \pi_i}
  &=
  \frac{\pi_j}{\sum_{i=1}^j \pi_i} \prn*{\frac{w_j}{\pi_j}\sum_{i=1}^j \pi_i + 1 - \sum_{i=1}^j w_i}
  \\
  &=
  \frac{\pi_j}{\sum_{i=1}^j \pi_i} \prn*{\frac{w_j}{\pi_j}\sum_{i=1}^j \pi_i + 1 - \sum_{i=1}^\ell w_i - \sum_{i=\ell+1}^{j} w_i}
  \\
  &=
  \frac{\pi_j}{\sum_{i=1}^j \pi_i} \prn*{\frac{w_j}{\pi_j}\sum_{i=1}^j \pi_i + \mu\sum_{i=1}^\ell\pi_i - \sum_{i=\ell+1}^{j} w_i}
  \\
  &=
  \frac{\pi_j}{\sum_{i=1}^j \pi_i} \prn*{\frac{w_j}{\pi_j}\sum_{i=1}^\ell \pi_i + \mu\sum_{i=1}^\ell\pi_i + \frac{w_j}{\pi_j}\sum_{i=\ell+1}^j\pi_i- \sum_{i=\ell+1}^{j} w_i}
  \\
  &=
  \frac{\pi_j}{\sum_{i=1}^j \pi_i} \prn*{\frac{w_j + \pi_j\mu}{\pi_j}\sum_{i=1}^\ell \pi_i + \sum_{i=\ell+1}^j\pi_i (\pi_j^{-1}w_j - \pi_i^{-1}w_i)}
  .
\end{align}
The right-hand side is non-positive since $w_j + \pi_j\mu = -\pi_i\beta_i \le 0$ for $j > \ell$ due to \eqref{eq:kkt-lag}, \eqref{eq:kkt-beta}, and \eqref{eq:r-q-mu} and $\pi_j^{-1}w_j \le \pi_i^{-1}w_i$ for $j \ge i$ by the sorting.

\begin{algorithm}[tb]
	\caption{$\pi$-Norm projection of $w \in \R^n$ onto $\Delta_n$}\label{alg:projection}
	\begin{algorithmic}[1]
    \State Sort the components of $w$ to satisfy $\pi_1^{-1} w_1 \ge \dots \ge \pi_n^{-1} w_n$ {\label{line:sort-pi-w}}
    \State Find $\ell \defeq \max\set{j \in [n]}[w_j + \pi_j \frac{1 - \sum_{i=1}^j w_i}{\sum_{i=1}^j \pi_i} > 0]$
    \State $\mu \defeq \frac{1 - \sum_{i=1}^\ell w_i}{\sum_{i=1}^\ell \pi_i}$
    \State Output $q_i \defeq \max\set{w_i + \pi_i \mu, 0}$ for $i \in [n]$
	\end{algorithmic}
\end{algorithm}

To conclude, the desired projection $q$ can be computed as \Cref{alg:projection}.
As for the computation cost, \Cref{line:sort-pi-w} is the dominant part, which takes $\mathrm{O}(n \log n)$ time.

\section{Determining the Optimal Reference Time}\label{sec:determining-optimal-time}

In this section, we consider the problem of minimizing
\begin{align}
  f(t) \defeq \max_{\lambda < 0} \min \set{
    -\frac{a}{\lambda} + \e^{t\lambda},
    -\frac{\lambda}{b} + 1 - \e^{t\lambda}
  }
\end{align}
over $t \in \Rp$ with $a, b \in \Rpp$.
In the setting of~\eqref{eq:minimize-error-problem}, $a = \rho(D)$ and $b = \sigma(K_{SS})$.

For fixed $t \in \Rp$, let $\alpha_t(\lambda) \defeq -\frac{a}{\lambda} + \e^{t\lambda}$ and $\beta_t(\lambda) \defeq -\frac{\lambda}{b} + 1 - \e^{t\lambda}$.
Then, $\alpha_t$ and $\beta_t$ are are strictly monotone increasing and decreasing, respectively, and they satisfy
\begin{align}
  \lim_{\lambda \to -\infty} \alpha_t(\lambda) &= \begin{cases}
    1 & (t = 0),\\
    0 & (t > 0),
  \end{cases}&
  \lim_{\lambda \to -0} \alpha_t(\lambda) &= +\infty, \\
  \lim_{\lambda \to -\infty} \beta_t(\lambda) &= +\infty,&
  \lim_{\lambda \to -0} \beta_t(\lambda) &= 0.
\end{align}
Thus, for every $t \in \Rp$, there exists unique $\lambda \in \R_{<0}$ such that $\alpha_t(\lambda) = \beta_t(\lambda)$, and such $\lambda$ is a unique maximizer of $\min\set{\alpha_t(\lambda), \beta_t(\lambda)}$.
We denote it by $\lambda^*(t)$.
Namely, $\lambda^*(t)$ satisfies
\begin{align}\label{eq:f-t-lambda-t}
  f(t)
  = -\frac{a}{\lambda^*(t)} + \e^{t\lambda^*(t)}
  = -\frac{\lambda^*(t)}{b} + 1 - \e^{t\lambda^*(t)}.
\end{align}
Solving~\eqref{eq:f-t-lambda-t} for $\e^{t\lambda^*(t)}$, we get
\begin{align}\label{eq:exp-t-lamabda}
  \e^{t\lambda^*(t)} = \frac12 \prn{\frac{a}{\lambda^*(t)} - \frac{\lambda^*(t)}{b} + 1}.
\end{align}
Thus, $f(t)$ can also be expressed as
\begin{align}\label{eq:f-t-simple}
  f(t) = \frac12 \prn{-\frac{a}{\lambda^*(t)} - \frac{\lambda^*(t)}{b} + 1}.
\end{align}

Now, $\lambda^*(t)$ is the explicit function obtained by solving an implicit function $g(t, \lambda) \defeq \alpha_t(\lambda) - \beta_t(\lambda) = 0$ for $\lambda$.
The function $g$ is sufficiently smooth on $\R \times \R_{<0}$ and it satisfies $\pdif{g}{\lambda}(t, \lambda) > 0$ for all $(t, \lambda) \in \Rp \times \R_{<0}$.
Therefore, there exists open $U \subseteq \R \times \R_{<0}$ with $\Rp \times \R_{<0} \subseteq U$ such that $\pdif{g}{\lambda}(t, \lambda) \ne 0$ for all $(t, \lambda) \in U$.
By the implicit function theorem, $\lambda^*(t)$ is smooth over $t \in \Rp$ and so is $f(t)$ due to~\eqref{eq:f-t-simple}.
Differentiating~\eqref{eq:f-t-simple} by $t$, we get
\begin{align}
  \dot{f}(t)
  = \frac12 \prn{\frac{a}{{\lambda^*(t)}^2} - \frac{1}{b}} \dot{\lambda}^*(t).
\end{align}
Similarly, differentiating~\eqref{eq:exp-t-lamabda} by $t$, we obtain
\begin{align}\label{eq:dot-f}
  (\lambda^*(t) + t\dot{\lambda}^*(t))\e^{t\lambda^*(t)}
  = \frac12 \prn{-\frac{a\dot{\lambda}^*(t)}{{\lambda^*(t)}^2} - \frac{\dot{\lambda}^*(t)}{b}}
\end{align}
and
\begin{align}\label{eq:lambda-dot}
  \dot{\lambda}^*(t)
  = -\frac{{\lambda^*(t)}^2}{t\lambda^*(t) + \frac12\prn{\frac{a}{\lambda^*(t)} + \frac{\lambda^*(t)}{b}}\e^{-t\lambda^*(t)}}.
\end{align}

Let $t^* \in \Rp$ be a stationary point of $f(t)$, i.e., $\dot{f}(t^*) = 0$.
By~\eqref{eq:dot-f}, $\dot{f}(t^*) = 0$ is equivalent to $\dot{\lambda}^*(t^*) = 0$ or $\frac{a}{{\lambda^*(t^*)}^2} - \frac{1}{b} = 0$.
If $\dot{\lambda}^*(t^*) = 0$, it holds that $\lambda^*(t^*) = 0$ from~\eqref{eq:lambda-dot}, which contradicts $\lambda^*(t^*) < 0$.
Thus, $\frac{a}{{\lambda^*(t^*)}^2} - \frac{1}{b} = 0$ must hold, i.e., $\lambda^*(t^*) = -\sqrt{ab}$.
Substituting this into~\eqref{eq:exp-t-lamabda}, we get
\begin{align}
  \e^{-t^*\sqrt{ab}} = \frac12 \prn{-\frac{a}{\sqrt{ab}} + \frac{\sqrt{ab}}{b} + 1} = \frac12
\end{align}
and thus
\begin{align}
  t^* = \frac{\ln 2}{\sqrt{ab}}.
\end{align}

Finally, by direct calculation, we can check that
\begin{align}
  \dot{\lambda}^*(t^*) &= \frac{ab}{2 \sqrt{\frac{a}{b}} + \ln 2} > 0, \\
  \ddot{f}(t^*)
  &= -\frac{a\dot{\lambda}^*(t^*)}{{\lambda^*(t^*)}^3} + \frac12 \prn{\frac{a}{{\lambda^*(t^*)}^2} - \frac{1}{b}} \ddot{\lambda}^*(t^*)
  = -\frac{a\dot{\lambda}^*(t^*)}{{\lambda^*(t^*)}^3} > 0.
\end{align}
Thus, $t^*$ is a local minimum of $f(t)$, and is indeed the global minimum since it is the unique stationary point of $f(t)$ with $t \in \Rp$.

\section{Steady-states Selection and MAP Inference for DPPs}\label{sec:map-inference-dpp}
The max-diagonal strategy for steady-states selection in \Cref{alg:improved} is closely related to the \emph{maximum a posteriori} (MAP) \emph{inference for determinantal point processes} (DPPs)~\cite{Macchi1975-ps} as follows.

If $S^{(k)}$ could be chosen independently from $S^{(k-1)}$, the steady-states selection boiled down to the task of finding a bipartition $\set{S, T}$ of $[n]$ with $|S| = k$ such that the error of the RCMC method becomes smaller.
Roughly speaking, the results in \Cref{sec:error-analysis} claim that the accuracy of the RCMC method improves when $\sigma(K_{SS})$ is large and $\rho(D)$ with $D = K_{TT} - K_{TS}K_{SS}^{-1}K_{ST}$ is small.
Namely, it is desirable to choose $S \subseteq [n]$ with $|S| = k$ such that all the eigenvalues of $K_{SS}$ and $D$ are large and small, respectively, in terms of the absolute value.

Since $\det K_{SS} \det D = \det K$, using $\abs{\det K_{SS}}$ in place of $\sigma(K_{SS})$ and $\frac{1}{\rho(D)}$, one can model this task as a problem of finding $S \subseteq [n]$ with $|S| = k$ such that $\abs{\det K_{SS}}$ is maximized, or equivalently, $f(S) \defeq \ln {\abs{\det K_{SS}}}$ is the largest.
This problem is known as the \emph{maximum a posteriori} (MAP) \emph{inference for determinantal point processes} (DPPs) if $K$ is symmetric positive (negative) semi-definite~\cite{Chen2018-aa,Macchi1975-ps}.
Since $K$ is written as $-L\Pi^{-1}$ with a symmetric positive semi-definite matrix $L$, it holds that
\begin{align}\label{eq:logdet-KSS}
  f(S) = \ln \det L_{SS} - \sum_{i \in S} \ln \pi_i.
\end{align}
The function $g(S) \defeq \ln \det L_{SS}$ is known to be \emph{submodular}, i.e., $g(i \given X) \ge g(i \given Y)$ holds for all $Y \subseteq [n]$, $X \subseteq Y$, and $i \in [n] \setminus Y$, where $g(i \given S) \defeq g(S \cup \set{i}) - g(S)$ for $S \subseteq [n]$ and $i \in [n]$.
By~\eqref{eq:logdet-KSS}, $f(S)$ is also submodular, and thus the problem of maximizing $f(S)$ is a special case of submodular function maximization under a cardinality constraint.

A standard approach to submodular function maximization with a cardinality constraint is the \emph{greedy algorithm}: starting from $S^{(0)} = \emptyset$, one chooses $j^{(k)} \in \argmax \set[\big]{f\prn[\big]{j \given S^{(k-1)}}}[j \in [n] \setminus S]$ and let $S^{(k)} \defeq S^{(k-1)} \cup \set[\big]{j^{(k)}}$ for $k = 1, 2, \dotsc$.
Indeed, as shown in~\cite{Chen2018-aa}, $f\prn[\big]{j \given S^{(k-1)}}$ coincides with the logarithm of $\abs[\big]{D_{jj}^{(k-1)}}$ by
\begin{align}
  f\prn[\big]{j \given S^{(k-1)}}
  &= \ln {\abs[\big]{\det K_{S^{(k-1)} \cup \set{j}, S^{(k-1)} \cup \set{j}}}} - \ln {\abs{\det K_{S^{(k-1)}S^{(k-1)}}}} \\
  &= \ln {\abs{\frac{\det K_{S^{(k-1)} \cup \set{j}, S^{(k-1)} \cup \set{j}}}{\det K_{S^{(k-1)}S^{(k-1)}}}}} \\
  &= \ln {\abs[\big]{D_{jj}^{(k-1)}}},
\end{align}
meaning that \Cref{alg:improved} is nothing but an implementation of the greedy algorithm for maximizing $f(S)$.
Therefore, the max-diagonal strategy is natural from the viewpoint of error analysis as well.

\section{Incremental Update of LU Decomposition for \texorpdfstring{$M$}{M}}\label{sec:incremental-update-M}
Let $K \in \R^{n \times n}$ be a rate constant matrix and $S = S^{(k)}$ and $T = T^{(k)}$ those defined in \Cref{alg:improved}.
In the RCMC method of Type~B, we solve a linear system with the coefficient matrix
\begin{align}
  M^{(k)}
  \defeq{}& \Omega^{(k)}{\Omega^{(k)}}^*
  = I_T + K_{TS} K_{SS}^{-2}K_{ST},
\end{align}
where
\begin{align}
  \Omega^{(k)} \defeq \begin{pmatrix} -K_{TS} K_{SS}^{-1} & I_{T} \end{pmatrix}.
\end{align}
In the discussions below, we show that one can update a Cholesky factor of $M^{(k-1)}$ into that of $M^{(k)}$ by performing the \emph{rank-one update} of a Cholesky decomposition of a matrix in $\R^{T \times T}$.
Here, the rank-one update of a Cholesky decomposition of a positive definite matrix $A \in \R^{T \times T}$ means a task to compute a Cholesky decomposition of $A + vv^\top$ for given vector $v \in \R^T$ using a Cholesky decomposition of $A$.
Efficient algorithms that run in $\Order\prn[\big]{|T|^2} = \Order(n^2)$ time are available for this problem~\cite{Fletcher1974-bs}.

Let $G^{(k)}$ be a Cholesky factor of $M^{(k)}$ with rows and columns indexed by $T$ and $[|T|] = [n-k]$, respectively.
As with the matrix $C^*$ in \Cref{sec:incremantally-updating-lu-decomposition}, the adjoint matrix ${G^{(k)}}^*$ means ${G^{(k)}}^\top \Pi_{T}^{-1}$.
By technical reasons, we assume that the rows of $G^{(k)}$ are ordered as $j^{(k+1)}, j^{(k+2)}, \dotsc$, i.e., one needs to compute the entire sequence of $j^{(1)}, j^{(2)}, \dotsc$ before calculating the output vectors $p^{(1)}, p^{(2)}, \dotsc$, which is possible since $j^{(k)}$ can be determined without referencing $M^{(k)}$.

Initially, we can take $G^{(0)} = \Pi^{\frac12}$ by $M^{(0)} = I_n$.
Consider the update at the $k$th iteration.
Let $E = E^{(k)} \in \R^{T^{(k)} \times T^{(k-1)}}$ be the matrix defined by~\eqref{def:P}.
By~\eqref{eq:original-omega-lem-3}, $M^{(k)}$ can be written as
\begin{align}\label{eq:M-update}
  M^{(k)}
  = \Omega^{(k)}{\Omega^{(k)}}^*
  = E\Omega^{(k-1)}{\Omega^{(k-1)}}^*{E}^*
  = EM^{(k-1)}E^*
  = EG^{(k-1)}{G^{(k-1)}}^*E^*.
\end{align}
Letting $T = T^{(k)}$, $j = j^{(k)}$, and $J \defeq \set{2, \dotsc, n-k}$, we have
\begin{align}\label{eq:EG}
  EG^{(k-1)}
  = \begin{pmatrix}
    -\frac{D^{(k-1)}_{Tj}}{D^{(k-1)}_{jj}} & I_T
  \end{pmatrix}
  \begin{pmatrix}
    G_{j1}^{(k-1)} & \zerostr \\
    G_{T1}^{(k-1)} & G_{TJ}^{(k-1)}
  \end{pmatrix}
  = \begin{pmatrix}
    v & G_{TJ}^{(k-1)}
  \end{pmatrix},
\end{align}
where
\begin{align}
  v \defeq G_{T1}^{(k-1)} - \frac{G_{j1}^{(k-1)}}{D^{(k-1)}_{jj}}D^{(k-1)}_{Tj} \in \R^T.
\end{align}
By~\eqref{eq:M-update} and~\eqref{eq:EG}, it holds that
\begin{align}\label{eq:M-chol-update}
  M^{(k)} = \begin{pmatrix} v & G_{TJ}^{(k-1)} \end{pmatrix}
  \begin{pmatrix} v^* \\ {G_{TJ}^{(k-1)}}^* \end{pmatrix}
  = G_{TJ}^{(k-1)} {G_{TJ}^{(k-1)}}^* + vv^*
\end{align}
with $v^* \defeq v^\top\Pi_T^{-1}$.
The equation~\eqref{eq:M-chol-update} means that $G^{(k)}$ is obtained as the rank-one update of $G_{TJ}^{(k-1)} {G_{TJ}^{(k-1)}}^*$ for $v$.
Analyzing numerical stability on the rank-one update is left for further investigation.

\end{document}